\numberwithin{equation}{section} 
\newtheorem{theorem}{Theorem}[section]
\newtheorem{proposition}[theorem]{Proposition}
\newtheorem{lemma}[theorem]{Lemma}
\newtheorem{corollary}[theorem]{Corollary}
\theoremstyle{definition}
\newtheorem{example}{Example}[section]
\newtheorem{remark}{Remark}[section]
\newenvironment{assumptiond}[1]{
  
  \assumptiondecay
}{\endassumptiondecay}
\newenvironment{assumptionm}[1]{
  
  \assumptionmeasure
}{\endassumptionmeasure}
\newenvironment{assumptions}[1]{
  
  \assumptionsequence
}{\endassumptionsequence}
\newenvironment{assumptionsr}[1]{
  
  \assumptionshortreturns
}{\endassumptionshortreturns}
\newcounter{proofcase}
\newcommand{\case}[1][]{%
  \par\addvspace{\medskipamount}%
  \refstepcounter{proofcase}%
  \noindent
  \ifblank{#1}
    {\textit{Case \theproofcase.}}
    {\textit{Case \theproofcase: #1.}}%
  \enspace}
\newcounter{proofcount}
\newcounter{proofstep}
\newcommand{\step}[1][]{%
  \par\addvspace{\medskipamount}%
  \refstepcounter{proofstep}%
  \noindent
  \ifblank{#1}
    {\textit{Step \theproofstep.}}
    {\textit{Step \theproofstep: #1.}}%
  \enspace}
\newcommand{\conclusion}{%
  \par\addvspace{\medskipamount}%
  \noindent\textit{Conclusion.}\enspace}
\newcommand{\naturals}{\mathbb{N}}
\newcommand{\reals}{\mathbb{R}}
\newcommand{\complexes}{\mathbb{C}}
\newcommand{\proj}{\mathbb{P}}
\newcommand{\torus}{\mathbb{T}}
\newcommand{\bcal}{\mathcal{B}}
\newcommand{\dcal}{\mathcal{D}}
\newcommand{\ecal}{\mathcal{E}}
\newcommand{\fcal}{\mathcal{F}}
\newcommand{\gcal}{\mathcal{G}}
\newcommand{\hcal}{\mathcal{H}}
\newcommand{\mcal}{\mathcal{M}}
\newcommand{\pcal}{\mathcal{P}}
\newcommand{\qcal}{\mathcal{Q}}
\newcommand{\scal}{\mathcal{S}}
\newcommand{\diff}{\mathop{}\!d}
\newcommand{\leqs}{\lesssim}
\newcommand{\geqs}{\gtrsim}
\newcommand{\eqs}{\asymp}
\newcommand{\coleq}{\coloneqq}
\newcommand{\norm}[1]{\lVert #1 \rVert}
\newcommand{\bignorm}[1]{\biggl\lVert #1 \biggr\rVert}
\newcommand{\normsp}[2]{\norm{#1}_{#2}}
\newcommand{\lipnorm}[1]{\norm{#1}_{\textup{Lip}}}
\newcommand{\supnorm}[1]{\norm{#1}_{\infty}}
\newcommand{\lnorm}[2]{\norm{#1}_{L^{#2}}}
\newcommand{\biglnorm}[2]{\bignorm{#1}_{L^{#2}}}
\newcommand{\holnorm}[2]{\norm{#1}_{#2}}
\newcommand{\holconst}[2]{\lvert#1\rvert_{#2}}
\newcommand{\lipconst}[1]{\lvert#1\rvert_{\textup{Lip}}}
\newcommand{\EE}{\mathbf{E}}
\newcommand{\CE}[2]{\EE[#1 | #2]}
\newcommand{\charfun}{\mathbf{1}}
\newcommand{\prob}{\mathbf{P}}
\DeclareMathOperator{\var}{Var}
\DeclareMathOperator{\covar}{Cov}
\DeclareMathOperator{\supp}{supp}
\DeclareMathOperator{\dist}{dist}
\newcommand{\bigo}{\mathcal{O}}
\newcommand{\smallo}{o}
\newcommand{\borel}{\bcal}
\newcommand{\bv}{\mathrm{BV}}
\newcommand{\card}{\#}
\newcommand{\todistr}{\xrightarrow[]{\dcal}}
\newcommand{\tomeas}[1]{\xrightarrow[]{#1}}
\newcommand{\partition}{\qcal}
\newcommand{\diam}[1]{\lvert #1 \rvert}
\newcommand{\hol}[1]{\hcal_{#1}}
\newcommand{\haus}{H}
\newcommand{\Ell}[1]{L^{#1}}
\newcommand{\hlmax}{\mathfrak{M}}
\newcommand{\leb}{m}
\DeclareMathOperator{\totalvar}{var}
\newcommand{\sft}{\Sigma}
\newcommand{\shiftop}{\sigma}
\newcommand{\sftpos}{\Sigma^+}
\newcommand{\sigmapos}{\sigma}
\newcommand{\sftfun}{\fcal}
\newcommand{\sftnorm}[2]{\holnorm{#1}{#2}}
\newcommand{\spgap}{\tau_0}
\newcommand{\doc}{\spgap}
\newcommand{\mdoc}{\tau_r}
\newcommand{\frost}{s_0}
\newcommand{\thinan}{\alpha_0}
\newcommand{\thinanr}{\rho_0}
\newcommand{\srelog}{\upsilon_0}
\newcommand{\srein}{\delta_0}
\newcommand{\sreout}{\eta_0}
\newcommand{\sremin}{\delta}
\newcommand{\srilog}{\srelog}
\newcommand{\sriin}{\srein}
\newcommand{\sriout}{\sreout}
\newcommand{\srimin}{\sremin}
\newcommand{\seqlow}{\gamma} 
\newcommand{\seqhigh}{\upsilon}
\newcommand{\frostan}{\omega_0}
\newcommand{\lexmeas}{\eta_1}
\newcommand{\lexcor}{\eta_2}
\newcommand{\lexcorcoeff}{q_0}
\newcommand{\lexcorsum}{p_0}
\newcommand{\limmeas}{\lexmeas}
\newcommand{\limcor}{\lexcor}
\newcommand{\limcorcoeff}{\lexcorcoeff}
\newcommand{\limcorsum}{\lexcorsum}
\newcommand{\lsre}{\varepsilon}
\newcommand{\lsri}{\varepsilon}
\newcommand{\exE}{E}
\newcommand{\exAv}{\Theta}
\newcommand{\exsig}{\sigma}
\newcommand{\exs}{s}
\newcommand{\exesum}{\ecal}
\newcommand{\exmsum}{\asipmsum}
\newcommand{\imr}{\hat{r}}
\newcommand{\imE}{\hat{E}}
\newcommand{\imM}{M}
\newcommand{\imsig}{\hat{\sigma}}
\newcommand{\ims}{\hat{s}}
\newcommand{\imesum}{\hat{\ecal}}
\newcommand{\immsum}{\mcal}
\newcommand{\abr}{\chi}
\newcommand{\asipvar}{\zeta} 
\newcommand{\asipvarA}{\hat{\zeta}} 
\newcommand{\asipvarlem}{\asipvar}
\newcommand{\abstractvar}{\xi} 
\newcommand{\asipmeas}{b}
\newcommand{\asipmsum}{\mcal}
\newcommand{\sftvar}{\zeta}
\newcommand{\sftvarpos}{\hat{\zeta}}
\newcommand\ptmark[1]{\draw (#1,-0.1) -- (#1,0.1)}
\title[Limit Laws for Recurrence and Shrinking Targets]{Limit Laws for
  Poincar\'e Recurrence and the Shrinking Target Problem}
\date{June 14, 2026}
\author{Alejandro Rodriguez Sponheimer}
\address{Centre for Mathematical Sciences,
Lund University, Box~118, 221~00 Lund, Sweden}
\email{alejandro.rodriguez\_sponheimer@math.lth.se}
\subjclass[2020]{37B20 (Primary); 37D20, 37A05 (Secondary)}
\keywords{Recurrence, Central limit theorem, Shrinking target problem,
Almost sure invariance principle}
\begin{document}

\begin{abstract}
  We establish distributional laws for Poincar\'e recurrence in
  measure-preserving systems $(X,T,\mu)$ satisfying an exponential
  multiple decorrelation condition and a short returns condition.
  When the measure is absolutely continuous, the sum
  $\sum_{k=1}^{n} \charfun_{B(x,r_k)}(T^{k}x) - \mu(B(x,r_k))$
  \emph{does not} in general obey a CLT; instead, it converges to a
  non-standard distribution that is an \emph{average} of Gaussian laws
  weighted by the density of $\mu$.
  By considering a version of the sum where we appropriately rescale
  the radii of the balls, we recover the CLT.
  A key assumption in our recurrence theorems is that the corresponding
  hitting sums satisfy the CLT.
  We verify this assumption for Axiom~A systems by establishing the
  stronger ASIP for the shrinking target problem,
  extending Haydn, Nicol, T{\"o}r{\"o}k and Vaienti
  [Trans.\ Amer.\ Math.\ Soc.\ 2017]
  and related results.
  Systems for which our results apply include piecewise expanding
  systems on the interval, and Axiom A systems.
  The results highlight the difference between recurrence and
  hitting behaviour.
\end{abstract}

\maketitle

\tableofcontents

\section{Introduction}

\subsection{Background and motivation}
A fundamental result in dynamical systems is the
Poincar\'e Recurrence Theorem, which gives conditions for when a given
point returns infinitely often to a shrinking neighbourhood of its
initial position.
More precisely, if $(X,T,\mu,d)$ is a separable metric
measure-preserving system with a finite Borel measure $\mu$,
then $\mu$-a.e.\ point $x \in X$ is
\emph{recurrent}, meaning that
$\liminf_{n \to \infty} d(x,T^{n}x) = 0$.
While this theorem offers a general qualitative conclusion under
minimal assumptions, it leaves open the question of \emph{quantitative
recurrence}.
For example, can one~-- perhaps under additional conditions~--
establish the rate at which $T^{n}x$ returns to $x$?
This question has motivated a significant body of recent work in
quantitative recurrence.

The first result establishing a rate of convergence of
$\liminf_{n \to \infty}d(x,T^{n}x) = 0$ is due to Boshernitzan
\cite{boshernitzan-1993-quantitative} in \textup{1993} and, similar to
the Poincar\'e Recurrence theorem, has minimal assumptions. Namely,
under the additional assumption that there exists $\alpha > 0$ such
that $X$ is $\sigma$-finite with respect to the Hausdorff measure
$\haus_\alpha$, one has that
\begin{equation}
\label{eq:boshernitzan}
  \liminf_{n \to \infty} n^{-1/\alpha} d(x,T^{n}x) < \infty
  \quad \text{$\mu$-a.e.\ $x$}.
\end{equation}
Following Boshernitzan's work, Barreira and Saussol
\cite{barreira-2001-hausdorff} and Urb\'anski
\cite{urbanski-2007-recurrence} obtain a connection between recurrence
rates and the pointwise dimension of the measure for various systems.

Another way of viewing \eqref{eq:boshernitzan} is the following: for
$\mu$-a.e.\ $x$, there exists $C(x) > 0$ such that
\[
  \sum_{k=1}^{\infty} \charfun_{B(x,C(x) k^{-1/\alpha})}(T^{k}x)
  = \infty.
\]
More generally, we may consider the sum
\begin{equation}
\label{eq:boshernitzan:sum}
  \sum_{k=1}^{n} \charfun_{B(x, \imr_k(x))}(T^{k}x),
\end{equation}
for more general sequences of radii $\imr_n(x)$. Recently,
the study of the asymptotic behaviour of $\eqref{eq:boshernitzan:sum}$
has received substantial attention. This has resulted in
Borel--Cantelli lemmas
\cite{
  dolgopyat-2022-multiple,
  he-2023-quantitative,
  hussain-2022-dynamical,
  kirsebom-2023-shrinking,
  kleinbock-2023-dynamical
},
their strong versions
\cite{
  he-2024-quantitative,
  huang-2025-exponential,
  levesley-2024-shrinking,
  persson-2023-strong,
  persson-2026-strong,
  ars-2025-recurrence
},
as well as a Poisson-like law
\cite{holland-2025-distributional}.

We note the similarity with the more familiar \emph{shrinking target
problem}, in which one studies the hitting version of
\eqref{eq:boshernitzan:sum}, namely
\begin{equation}
\label{eq:hitting:sum}
  \sum_{k=1}^{n} \charfun_{B(y_k, r_k)}(T^{k}x),
\end{equation}
where $B(y_k,r_k)$ is a fixed sequence of balls.
The first result establishing a strong Borel--Cantelli lemma for
\eqref{eq:hitting:sum} dates back to Philipp~\cite{philipp-1967-some},
who establishes the result for $\times n$-maps,
$\beta$-transformations, and the Gauss map.
A particularly relevant result for hitting is that of Chernov and
Kleinbock~\cite{chernov-2001-dynamical}, who prove a strong
Borel--Cantelli lemma for Axiom~A diffeomorphisms. Recently, the
central limit theorem and almost sure invariance principle have been
considered
\cite{haydn-2013-central,haydn-2017-almost,chen-2018-non-stationary}.
Due to the similarity of the expressions \eqref{eq:boshernitzan:sum}
and \eqref{eq:hitting:sum}, one may expect that recurrence and hitting
satisfy similar (if not the same) limit laws.

In the present article, we focus on distributional laws for recurrence
and their relationship to the corresponding laws for hitting.
We give a brief overview of our results, delaying the precise
statements for later.
\begin{enumerate}[label=\Roman*.,ref=\Roman*]
  \item
    \label{enum:1}
    \textbf{Averaged limit theorem for recurrence.}
    We prove that, whenever the measure $\mu$ is absolutely continuous
    and satisfies suitable mixing conditions,
    \begin{equation}
    \label{eq:intro:result:recurrence:1}
      \sum_{k=1}^{n} \charfun_{B(x,r_k)}(T^{k}x) - \mu(B(x,r_k))
    \end{equation}
    \emph{does not} obey a CLT. Instead, it obeys a distributional law
    that is in some sense an average of Gaussian ones.
  \item
    \label{enum:2}
    \textbf{Central limit theorem for recurrence.}
    By instead considering balls $B(x,\imr_k(x))$ whose measures do not
    depend on $x$, we recover the CLT. That is, we prove that, under
    suitable mixing and regularity conditions,
    \begin{equation}
    \label{eq:intro:result:recurrence:2}
      \sum_{k=1}^{n} \charfun_{B(x,\imr_k(x))}(T^{k}x)
      - \mu\bigl(B(x,\imr_k(x))\bigr)
    \end{equation}
    \emph{does} obey a CLT.
  \item
    \label{enum:3}
    \textbf{Almost sure invariance principle for hitting.}
    We prove that, for Axiom~A systems equipped with Gibbs measures, 
    \begin{equation}
    \label{eq:intro:result:asip}
      \sum_{k=1}^{n} \charfun_{B(y_k,r_k)}(T^{k}x) - \mu(B(y_k,r_k))
    \end{equation}
    obeys an ASIP (and thus a CLT).
\end{enumerate}

Result~\ref{enum:1} is the first to establish a distributional result
for the sum \eqref{eq:intro:result:recurrence:1}.
It highlights the difference between the recurrence and hitting
problems, similar to other recent limit laws for recurrence 
\cite{
  he-2024-quantitative,
  holland-2025-distributional,
  huang-2025-exponential
}.
Result~\ref{enum:2} recovers the `expected' limit law by considering a
variation of the first problem, similar to what is done in
\cite{
  persson-2023-strong,
  ars-2025-recurrence,
  persson-2026-strong,
  huang-2025-exponential
}.
This version also allows us to consider more general measures that need
not be absolutely continuous, thus allowing us to apply our results to
Axiom~A systems for instance.
Finally, Result~\ref{enum:3} establishes the CLT and ASIP for the
shrinking target problem in Axiom~A systems with a general class of
Gibbs measures and arbitrary shrinking sequences of target balls. It
improves upon previous works
\cite{haydn-2013-central,haydn-2017-almost}, which limit the systems to
expanding ones, and \cite{chen-2018-non-stationary}, which restricts
the measure and targets considered.
Beyond this independent contribution, Result~\ref{enum:3} is used to
verify assumptions in Results~\ref{enum:1} and \ref{enum:2}, which
explicitly assume the CLT for the corresponding hitting sums.
Results~\ref{enum:1} and \ref{enum:3} make the recurrence-vs-hitting
contrast precise since the recurrence sum
\eqref{eq:intro:result:recurrence:1} does not obey a Gaussian limit,
while the hitting sum \eqref{eq:intro:result:asip} does.

\subsection{Recurrence laws}
We now describe the recent literature on limit laws for the recurrence
problem, with particular attention to the recurrence-vs-hitting
contrast.
Borel--Cantelli lemmas (or Zero--one laws) have been considered in 
\cite{
  dolgopyat-2022-multiple,
  he-2023-quantitative,
  hussain-2022-dynamical,
  kirsebom-2023-shrinking,
  kleinbock-2023-dynamical
}.
Instead of discussion all of the results, we highlight Dolgopyat, Fayad
and Liu \cite{dolgopyat-2022-multiple}, who prove a \emph{multiple}
Borel--Cantelli lemma for exponential multiple mixing systems provided
that the targets satisfy some abstract hypotheses.
They can apply their abstract result to both recurrence and hitting.

The first results we discuss in detail are `Strong Borel--Cantelli
lemmas for recurrence', which have only been established in the past
few years. Such results strengthen the $0$--$1$ law by giving a
quantitative relationship between the rate of returns and the measure
of the neighbourhoods.
The first results we describe, in order of appearance, are
\cite{
  persson-2023-strong,
  ars-2025-recurrence,
  persson-2026-strong,
  huang-2025-exponential
}.
When the measure of $B(x,\imr_k(x))$ does not depend on $x$, and under
exponential decay of $3$-fold correlations,
Persson and the author, individually
\cite{persson-2023-strong,ars-2025-recurrence} and jointly
\cite{persson-2026-strong}, prove
\begin{equation}
\label{eq:persson-ars}
  \lim_{n \to \infty} \frac{\sum_{k=1}^{n} \charfun_{B(x,\imr_k(x))}
    (T^{k}x)}{\sum_{k=1}^{n} \mu\bigl(B(x,\imr_k(x))\bigr)} = 1
\end{equation}
under some decay conditions on the sequence
$M_n = \mu(B(x,\imr_n(x)))$.
These results can be applied to a class of Gibbs measures for piecewise
expanding maps of the interval and Axiom A systems.
Moreover in \cite{persson-2026-strong}, the decay conditions on $(M_n)$
are significantly weakened and the optimal error term is obtained under
a short returns estimate.
Huang, Li and Velani~\cite[Theorem~1.6]{huang-2025-exponential} obtain
\eqref{eq:persson-ars} with the optimal error term for self-conformal
systems equipped with Gibbs measures, assuming only that $M_n \to 0$.

He~\cite{he-2024-quantitative}, Levesley, Li, Simmons and
Velani~\cite{levesley-2024-shrinking}, and Huang, Li and
Velani~\cite{huang-2025-exponential} instead consider targets of the
form $B(x,r_k)$ whose radii do not depend on $x$ (in fact
\cite{he-2024-quantitative,levesley-2024-shrinking} consider
rectangles). In \cite{he-2024-quantitative}, He considers piecewise
expanding systems of $[0,1]^{d}$ and measures $\mu$ that are
absolutely continuous with respect to the Lebesgue measure. He
establishes
\begin{equation}
\label{eq:he}
  \lim_{n \to \infty} \frac{\sum_{k=1}^{n}
  \charfun_{B(x,r_k)}(T^{k}x)}{\sum_{k=1}^{n} \leb(B(x,r_k))} = h(x)
  \quad \text{$\mu$-a.e.\ $x$},
\end{equation}
whenever $\sum_{k=1}^{\infty}r_k^{d} = \infty$ and $r_k \to 0$, and
where $h$ is the density of $\mu$.
In \cite{levesley-2024-shrinking}, the authors restrict the measure
$\mu$ to be the Lebesgue measure and are able to remove the condition
$r_k \to 0$, while also obtaining an error term.
When $\mu$ is a Gibbs measure equivalent to the $\alpha$-Hausdorff
measure, \cite[Theorem~1.7]{huang-2025-exponential} establishes
\[
  \lim_{n \to \infty} \frac{\sum_{k=1}^{n}
  \charfun_{B(x,r_k)}(T^{k}x)}{\sum_{k=1}^{n} \mu(B(x,r_k))} = 1
  \quad \text{$\mu$-a.e.\ $x$}.
\]

In another direction, Holland and
Todd~\cite{holland-2025-distributional} prove that, whenever $\mu$ is
absolutely continuous with respect to the Lebesgue measure such that
correlations decay exponentially for $\bv$ versus $\Ell{1}(\mu)$
observables, $\sum_{k=1}^{n} \charfun_{B(x,r_n)}(T^{k}x)$ converges in
distribution to an \emph{averaged} Poisson distribution, where
$r_n = \tau/2n$ for some parameter $\tau > 0$. More precisely, for all
$k \in \naturals$,
\begin{equation}
\label{eq:poisson}
  \lim_{n \to \infty} \mu \biggl\{x \in X : \sum_{j=1}^{n}
  \charfun_{B(x,r_n)}(T^{j}x) = k\biggr\}
  = \int_X \frac{\tau^{k} h(x)^{k+1}}{k!} e^{-h(x)\tau} \diff x,
\end{equation}
where $h$ is the density of $\mu$.

We see that the limit laws in \eqref{eq:he} and \eqref{eq:poisson}
depend explicitly on the density of the measure.
Notably, this is in stark contrast to the shrinking target setting,
whose corresponding limit laws are the expected `pure' ones.
This phenomenon ultimately arises from two facts: the targets
$B(x,r_k)$ depend on the initial point $x$, and the measures are
generally not translation-invariant. That is, $\mu(B(x,r_k))$ varies
with $x$.

We see from \eqref{eq:persson-ars} that adjusting for the lack of
translation-invariance by implicitly defining the radius $r_k(x)$ to
satisfy $M_k = \mu(B(x,r_k(x)))$, then one recovers the `pure' limit
law.
The survey \cite{he-2025-shrinking} by He, Li and Velani compares the
recurrence and shrinking target problem in various settings.
Our results extend this recurrence-vs-hitting picture to the
distributional setting.

\subsection{The central limit theorem}
We say that a sequence of real-valued centred observables $(\varPhi_k)$
on the m.p.s.\ $(X,T,\mu)$ satisfies the (self-norming) central
limit theorem (CLT) if
$\sigma_n^{-1} \sum_{k=1}^{n} \varPhi_k$,
where
$\sigma_n^2 = \var( \sum_{k=1}^{n} \varPhi_k )$,
converges in distribution to a standard Gaussian variable. In other
words, if
\begin{gather}
\label{clt}
\tag{CLT}
  \lim_{n \to \infty} \mu \biggl\{ x : \frac{1}{\sigma_n}
    \sum_{k=1}^{n} \varPhi_k(x) < t \biggr\}
  = \frac{1}{\sqrt{2\pi}}\int_{-\infty}^{t} e^{-\frac{s^2}{2}} \diff s
    \quad \text{for all $t \in \reals$}.
\end{gather}
Alternatively, L\'evy's continuity theorem
(see for instance \cite[Theorem~5.3]{kallenberg-2002-foundations})
implies that \eqref{clt} is equivalent to
\[
  \lim_{n \to \infty} \mu \biggl( \exp\biggl( \frac{it}{\sigma_n}
  \sum_{k=1}^{n} \varPhi_k \biggr) \biggr)
  = e^{-\frac{t^2}{2}}
  \quad \text{for all $t \in \reals$}.
\]
Usually one considers $\varPhi_k = \varphi \circ T^{k}$ for a fixed
observable $\varphi$, or more
generally $\varPhi_k = \varphi_k \circ T^{k}$ for a sequence of
observables $(\varphi_k)$. As mentioned previously, when
$\varPhi_k(x) = \charfun_{B(x,r_k)}(T^{k}x) - \mu(B(x,r_k))$, we do
not obtain \eqref{clt}, but rather a non-standard law
\begin{gather}
\label{alt}
\tag{ALT}
  \lim_{n \to \infty} \mu \biggl( \exp\biggl( \frac{it}{\sigma_n}
  \sum_{k=1}^{n} \varPhi_k \biggr) \biggr)
  = \int_X \exp\Bigl( -\frac{h(x)}{2\mu(h)} t^2 \Bigr) \diff\mu(x)
  \quad \text{for all $t \in \reals$},
\end{gather}
where $h$ is the density of the measure $\mu$.
We call this the \emph{averaged limit theorem} (ALT).
Note that \eqref{clt} and \eqref{alt} coincide when $h = 1$, i.e., when
$\mu$ is the Lebesgue measure.

The literature on the CLT for dynamical systems is vast, see for
instance
\cite{
  bjorklund-2020-quantitative,
  bowen-2008-equilibrium,
  chernov-1995-limit,
  dolgopyat-2004-limit,
  guivarch-1988-theoremes
}.
We direct the interested reader to the survey of
Le~Borgne~\cite{leborgne-2014-martingales} for the martingale approach,
and to the survey of Gou{\"e}zel~\cite{gouezel-2015-limit} for the
spectral approach.
We will direct our focus on non-autonomous CLTs, since we are
interested in \eqref{clt} for observables of the form
$\varPhi_k(x) = \charfun_{B_k(x)}(T^{k}x) - \mu(B_k(x))$.
Most of the recent literature
\cite{
  conze-2011-limit,
  demers-2025-central,
  denker-2017-lindeberg,
  dolgopyat-2025-berry,
  haydn-2017-almost,
  haydn-2013-central
}
is focused on sequential CLTs, where the dynamical system depends on
$k$. In these works, the observables either do not depend on
$k$, or the conditions on the sequence of observables are too
restrictive to allow application to the recurrence or shrinking target
problems in higher dimensional hyperbolic settings.

Conze and Raugi~\cite{conze-2007-limit} consider the CLT for piecewise
expanding sequential dynamical systems on $[0,1]$.
Later, Haydn, Nicol, Vaienti and Zhang~\cite{haydn-2013-central} adapt
the martingale method to prove the CLT for the shrinking target problem
in the setting of piecewise expanding interval maps with a spectral
gap. Their method involves a short returns estimate,
which was later removed by Haydn, Nicol, T{\"o}r{\"o}k and
Vaienti~\cite{haydn-2017-almost}. In both papers, the variance
$\sigma_n^2$ is assumed to satisfy either a polynomial decay or growth
condition, respectively.
Recently, Dolgopyat and Hafouta~\cite{dolgopyat-2025-berry} prove
Berry--Esseen theorems for sequential dynamical systems and remove the
growth condition on $\sigma_n^2$ completely, assuming only that
$\sigma_n^2 \to \infty$. Their results are stated for abstract systems
equipped with some abstract `variation' norm. Taking $T_j = T$ in
\cite[Theorem~3.1]{dolgopyat-2025-berry} and $f_j$ to be indicator
functions of shrinking targets, one obtains Berry--Esseen theorems for
the shrinking target problem in the setting of
piecewise expanding interval maps.
We note that in
\cite{haydn-2013-central, haydn-2017-almost, dolgopyat-2025-berry}
the observables $(f_j)$ are required to be uniformly bounded in the
relevant (semi-)norm.
In a recent preprint, Demers and Liverani~\cite{demers-2025-central}
establish a CLT for sequential dispersing billiards in which the
observables are allowed to be unbounded, but with uniformly bounded
H{\"o}lder coefficients.
As a result, it is not possible to apply aforementioned results to the
shrinking target problem in the higher dimensional hyperbolic setting.
A related result by Denker, Senti and
Zhang~\cite{denker-2017-lindeberg} establishes a Lindeberg-type CLT for
dynamical arrays in which the observables depend on the variable $n$ in
\eqref{clt} rather than $k$.

Although not applicable to the non-autonomous setting, we mention the
result of Bj{\"o}rklund and Gorodnik~\cite{bjorklund-2020-central} as
it features a similar assumption.
Namely, they establish an autonomous CLT for group actions that satisfy
exponential multiple mixing as formulated in Bj{\"o}rklund, Einsiedler
and Gorodnik~\cite{bjorklund-2020-quantitative}.
We emphasize that the multiple mixing condition we assume in this paper
is different to the one established in
\cite{bjorklund-2020-quantitative} (seemingly neither property implies
the other).

Finally, our result is motivated by the paper \cite{conze-2011-limit}
of Conze and Le~Borgne. In \cite[Theorem~3.3]{conze-2011-limit}, they
show that the central limit theorem generally does not hold for the
sums of the form
$n^{-1/2} \sum_{k=1}^{n}
\varphi(T^{k}x - x) - \mu (\varphi(\cdot - x) )$, when $X$ is a compact
group.
Instead, when $\varphi$ is H{\"o}lder continuous, and $(X,T,\mu)$
satisfies a multiple decorrelation property, the characteristic
function of the sum converges to the function
\begin{equation}
\label{eq:conze}
  t \mapsto \int_X
    \exp \Bigl(- \frac{1}{2} \sigma_x^2 t^2 \Bigr) \diff\mu(x),
\end{equation}
where
$\sigma_x^2 = \lim_{n \to \infty} n^{-1} \var( \sum_{k=1}^{n}
\varphi(T^{k}\cdot -\ x) - \mu(\varphi(\cdot - x)))$.
Although their result is autonomous,
it suggests that the sum in \eqref{eq:intro:result:recurrence:1}
satisfies a non-standard limit theorem.

Our main results on distributional laws are
Theorems~\ref{thm:alt} and Theorems~\ref{thm:clt}.
In Theorem~\ref{thm:alt}, we show that the sum in
\eqref{eq:intro:result:recurrence:1} satisfies \eqref{alt}
under a short returns estimate,
a multiple decorrelation condition,
and the assumption that the CLT holds for the shrinking target problem.
In Theorem~\ref{thm:clt} and under similar assumptions, we show that
one can consider a version of the sum in
\eqref{eq:intro:result:recurrence:1}, namely the sum in
\eqref{eq:intro:result:recurrence:2}, and recover the CLT.
An added benefit of considering the version in
\eqref{eq:intro:result:recurrence:2} is that one can consider systems
for which the measure is not absolutely continuous.

We take a characteristic function approach inspired by
\cite{conze-2011-limit}, which is where the multiple decorrelation
condition is important. The approach involves showing that the
characteristic function of the suitably normalized recurrence sum
\eqref{eq:intro:result:recurrence:1}
can be approximated by an average over the characteristic functions of
the corresponding suitably normalized hitting sums 
\[
  \sum_{k=1}^{n} \charfun_{B(y,r_k)}(T^{k}x) - \mu( B(y,r_k) ).
\]
By the assumption that the suitably normalized hitting sums converge in
distribution to a standard Gaussian variable for $\mu$-a.e.\ $y$, one
then obtains \eqref{alt}. Since the normalising factors of the hitting
sums depend on $y$, this makes it clear why we obtain a non-standard
limit law rather than \eqref{clt}.
If we instead consider the sum in \eqref{eq:intro:result:recurrence:2},
we see that the normalising factor of the corresponding hitting sums
\[
  \sum_{k=1}^{n} \charfun_{B(y,\imr_k(y))}(T^{k}x)
  - \mu\bigl( B(y,\imr_k(y)) \bigr)
\]
do not depend on $y$. Thus, it becomes clear why we recover \eqref{clt}
in this setting.
It is therefore important to show that the corresponding sums of the
shrinking target problem obey a CLT. As such, we establish the stronger
almost sure invariance principle, from which the CLT follows.

\subsection{The almost sure invariance principle}
A stronger result than the CLT is the almost sure invariance principle
(ASIP), which can be stated simply as an almost-sure version of the
CLT. The particular version of the ASIP we are interested in is the
following. We say that a sequence of real valued centred observables
$(\varPhi_k)$ on the m.p.s.\ $(X,T,\mu)$ satisfies the ASIP with
rate $\beta \in (0,1)$ if there exists a sequence of standard Gaussian
random variables $(Z_n)$ (by enlarging the underlying probability space
if necessary) such that
\begin{gather}
\label{asip}
\tag{ASIP}
  \sup_{1 \leq k \leq n} \biggl| \sum_{i=1}^{k}
  \varPhi_i - \sum_{i=1}^{k} Z_i \biggr|
  = \smallo( \sigma_n^{1-\beta} )
  \quad \text{a.s.}
\end{gather}
and $\sum_{k=1}^{n} \EE[Z_k^2] \sim \sigma_n^2$,
where $\sim$ denotes asymptotic equivalence. Again, usually one
considers $\varPhi_k = \varphi \circ T^{k}$, or
$\varPhi_k = \varphi_k \circ T^{k}$.
We are interested in the case when
$\varPhi_k = \charfun_{B(y_k,r_k)} \circ T^{k}$.

The ASIP is a strong asymptotic estimate, and basically says that the
sum $\sum_{k=1}^{n} \varPhi_k$ can be approximated sufficiently well
by Brownian motion. As such, one can translate properties of Brownian
motion to the sum of observables. In particular, the ASIP implies other
limit laws, such as the functional CLT and the law of the iterated
logarithm (LIL). For a demonstration of these implications, and for the
first ASIP for dynamical systems, see Philipp and
Stout~\cite{philipp-1975-almost}.

In the stationary setting, the ASIP has been established for wide class
of systems, see for instance
\cite{
  denker-1984-approximation,
  dolgopyat-2004-limit,
  gouezel-2010-almost,
  melbourne-2009-vector,
  merlevede-2012-strong
}
and references within.
As with the CLT, we restrict our focus to the non-stationary ASIP,
which has recently gained significant interest 
\cite{
  bayraktar-2018-almost,
  castro-2019-stability,
  chen-2018-non-stationary,
  haydn-2017-almost,
  liu-2024-wasserstein,
  su-2019-almost,
  su-2022-vector
}.
(We make a note that random systems have also been considered, see
\cite{liu-2024-wasserstein,su-2019-almost,su-2022-vector}
and references within.)
Similarly to the non-stationary CLT literature, it is usually assumed
that the sequence of observables is constant or uniformly bounded in
the relevant (semi-)norm, thus preventing the application to the
shrinking target problem in higher dimensional hyperbolic systems.
The exception to this is \cite{chen-2018-non-stationary}, which allows
for a specific form of targets for $2$-dimensional non-uniformly
hyperbolic systems.

Haydn, Nicol, T{\"o}r{\"o}k and Vaienti~\cite{haydn-2017-almost}
establish the ASIP for sequential expanding maps of the interval, and
uniformly hyperbolic maps. For expanding maps, the authors apply their
results to the shrinking target problem under assumptions on the growth
speed of the variance.
Chen, Yang and Zhang~\cite{chen-2018-non-stationary} extend the
shrinking target result of \cite{haydn-2017-almost} to $2$-dimensional
non-uniformly systems with singularities, and for which the measure
$\mu$ is the SRB measure. However, the targets need to be of a certain
form with respect to the singularities. They also improve the growth
condition of the variance compared to \cite{haydn-2017-almost}.
Bayraktar~\cite{bayraktar-2018-almost} considers sequences of
holomorphic maps of the same degree on the complex projective space
$\proj^{k}$ equipped with a natural measure associated to the sequence
of maps. The observables are assumed to be either H{\"o}lder continuous
or `DHS'.
Castro, Rodrigues and Varandas~\cite{castro-2019-stability} consider
convergent sequences of Anosov diffeomorphisms and a fixed H{\"o}lder
continuous observable. For any equilibrium state of the limit, they
obtain the ASIP with respect to an associated measure.
Su~\cite{su-2019-almost} considers sequences of Pomeau--Manneville maps
equipped with the Lebesgue measure, and a fixed Lipschitz continuous
observable.
Subsequently~\cite{su-2022-vector} extended the results to
vector-valued sequence of Lipschitz observables.
Finally, Liu and Wang~\cite{liu-2024-wasserstein} obtain convergence
rates with respect to the Wasserstein distance for
$\beta$-transformations, piecewise expanding maps on the interval, and
a class of covering maps. They consider uniformly bounded sequences of
H{\"o}lder continuous observables.

Motivated by the shrinking target problem, we modify the approach of
\cite{haydn-2017-almost}, which in turn is based on an abstract
martingale framework due to Cuny and
Merlev\`ede~\cite[Theorem~2.3]{cuny-2015-strong}
(see Theorem~\ref{thm:asip:abstract} below).
Our main ASIP contribution result is Theorem~\ref{thm:asip:hyperbolic},
which establishes the ASIP for Axiom~A systems with Gibbs measures, and
for which the sequence of observables need not be uniformly bounded.
More precisely, if $(\varphi_n)$ is a sequence of $\alpha$-H{\"o}lder
continuous observables, then we allow the H{\"o}lder coefficient
$\holconst{\varphi_n}{\alpha}$ to grow polynomially in $n$, i.e.,
$\holconst{\varphi_n}{\alpha} = \bigo(n^{q})$.
This improves upon \cite[Theorem~6.1]{haydn-2017-almost}, which assumes
$\sup_{n \geq 1} \holnorm{\varphi_n}{\alpha} < \infty$.
The polynomial growth of H{\"o}lder norms is what allows us to obtain
the ASIP for the shrinking target problem
(Theorem~\ref{thm:asip:hyperbolic:targets}) as a consequence since it
gives us enough freedom to approximate indicator functions of balls by
H{\"o}lder continuous observables.
We also improve the rate on the lower polynomial growth of the variance
$\sigma_n^2 \geq n^{\delta}$ from
$\delta > \frac{\sqrt{17}-1}{4} \approx 0.781$ in
\cite{haydn-2017-almost} to $\delta > 1/2$.
Compared to \cite[Theorem~18]{chen-2018-non-stationary}, we are able to
consider more general Gibbs measures than the SRB measure, and we are
able to consider any shrinking sequence of target balls. We note that 
\cite{chen-2018-non-stationary} does allow for polynomial growth of
`dynamical H{\"o}lder norms'.

As in \cite{haydn-2017-almost}, we establish the ASIP for Axiom~A
systems by exploiting the Markov partition, and use a modification of
the usual `Sinai trick' to reduce the problem to that of establishing
the ASIP for a one-sided sub-shift of finite type.
A modified Sinai trick is already necessary in
\cite{haydn-2017-almost} in order to account for the non-autonomous
nature of the problem. However, since we impose weaker conditions on
the H{\"o}lder norms of the observables, we need further modifications.
In particular, our modification results in the corresponding
sequence of observables on the shift-space no longer being uniformly
bounded in the supremum norm. In particular, we have
$\supnorm{\varphi_n} = \bigo (\log n)$. As a result, we also need to
further weaken the assumptions of the ASIP for non-invertible systems
present in \cite{haydn-2017-almost}.
This is done in Theorem~\ref{thm:asip:non-invertible:sup-growth}.

\subsection{Applications and examples}
We now list some representative examples of dynamical systems for which
we obtain the non-standard limit law \eqref{alt} for
$\varPhi_k(x) = \charfun_{B(x,r_k)}(T^{k}x) - \mu(B(x,r_k))$, i.e.,
\begin{multline}
\label{eq:ex:alt}
  \lim_{n \to \infty} \mu \biggl( \exp\biggl( \frac{it}{\sigma_n}
  \sum_{k=1}^{n} \charfun_{B(x,r_k)}(T^{k}x) - \mu(B(x,r_k)) \biggr)
  \biggr) \\
  = \int_X \exp\Bigl( -\frac{h(x)}{2\mu(h)} t^2 \Bigr) \diff\mu(x)
  \quad \text{for all $t \in \reals$},
\end{multline}
and for which we obtain \eqref{clt} for
$\varPhi_k(x) = \charfun_{B(x,r_k(x))}(T^{k}x) - \mu(B(x,r_k(x)))$,
i.e.,
\begin{equation}
\label{eq:ex:clt}
  \frac{1}{\sigma_n} \sum_{k=1}^{n}
  \charfun_{B(x,r_k(x))}(T^{k}x) - \mu\bigl(B(x,r_k(x))\bigr)
  \todistr N(0,1).
\end{equation}

\begin{example}[Gauss map]
\label{ex:gauss}
  Let $G \colon [0,1] \to [0,1]$ be the Gauss map, i.e.,
  \[
    G(x) =
    \begin{cases}
      0                   & x=0, \\
      \frac{1}{x} \pmod 1 & x \in (0,1],
    \end{cases}
  \]
  and let $\mu$ be the unique absolutely continuous measure with
  density
  \[
    h(x) = \frac{1}{(1+x)\log 2}.
  \]
  Then \eqref{eq:ex:alt} and \eqref{eq:ex:clt} both hold, but the
  limiting distributions are distinct.
  This is an explicit example of the CLT failing if one does not
  rescale the radii.
\end{example}

\begin{example}[Cat map]
\label{ex:cat}
  Let $T \colon [0,1]^2 \to [0,1]^2$ be given by $Tx = Ax \pmod 1$,
  where
  \[
    A =
    \begin{pmatrix}
      2 & 1 \\
      1 & 1
    \end{pmatrix},
  \]
  and let $\mu$ be the Lebesgue measure.
  Then \eqref{eq:ex:alt} and \eqref{eq:ex:clt} hold, and the limiting
  distributions are both standard Gaussian ones.
\end{example}

If we consider measures that are not absolutely continuous with respect
to the Lebesgue measure, we can consider more general systems. However,
the new difficulty lies in verifying the short returns condition for
these systems. In particular, we cannot verify this condition for
Example~\ref{ex:cat} if we replace the Lebesgue measure with a more
general Gibbs measure.
The choice of $3$ in the following example reflects the minimal
expansion needed to verify our short returns condition.
The minimum expansion fails for the doubling map, and as such we cannot
verify the short returns conditions when $\mu$ is not the Lebesgue
measure.

\begin{example}[$\times 3$-map]
\label{ex:tripling}
  Let $T \colon [0,1] \to [0,1]$ be given by $Tx = 3x \pmod 1$, and let
  $\mu$ be a non-absolutely continuous Gibbs measure corresponding to a
  H{\"o}lder continuous potential.
  Then \eqref{eq:ex:clt} holds, while \eqref{eq:ex:alt} is not
  well-defined.
\end{example}

In all of the examples above, we can show that the corresponding sums
for the shrinking target problem satisfy the ASIP.

\subsection{Structure of the paper}
The remainder of the paper is organised as follows.
Section~\ref{sec:prelims} establishes notation and assumptions.
Sections~\ref{sec:clt} and \ref{sec:asip} state the main distributional
and ASIP results, respectively.
Section~\ref{sec:variance} collects variance estimates, and
Section~\ref{sec:supporting} two technical lemmas.
Sections~\ref{sec:proof:clt}--\ref{sec:proof:supporting}
give the proofs, roughly in the order in which the results appear.
Section~\ref{sec:sr:proof} verifies the short returns assumption for
expanding maps on the interval equipped with Gibbs measures, and
hyperbolic toral automorphisms equipped with the Lebesgue measure.

\section{Preliminaries}
\label{sec:prelims}

\subsection{Setting and notation}
Throughout the paper, we assume that $(X,d)$ is a smooth compact
Riemannian manifold, and that
$(X,T,\mu)$ is a Borel measure-preserving dynamical system (m.p.s.)\
with $\mu(X) = 1$.
That is, $(X,\mu)$ is a Borel probability
space and $T \colon X \to X$ is a measurable transformation preserving
$\mu$. We assume also that $\mu$ is non-atomic.
We denote the normalised transfer operator corresponding to the
system by $P$, i.e., $P \colon \Ell{2}(\mu) \to \Ell{2}(\mu)$ is the
operator defined by
$\int_X \varphi (\psi \circ T) \diff\mu
= \int_X (P\varphi) \psi \diff\mu$
for all $\varphi, \psi \in \Ell{2}(\mu)$.

For an observable $f \colon X \to \reals$, we write
$\mu(f) = \int_X f \diff\mu$ and we denote by
$\tilde{f}$ the centred observable, i.e., $\tilde{f} = f - \mu(f)$.
If $g \colon X \to \reals$ is another observable, we define
$\covar(f,g) = \mu(fg) - \mu(f)\mu(g)$ and $\var(f) = \covar(f,f)$.
By the characteristic function of $f$, we
mean the function $\psi_f \colon \reals \to \complexes$ given by
$\psi_f(t) = \int_X \exp(itf(x)) \diff\mu(x)$.
For $\alpha \in (0,1]$, we denote by $\hol{\alpha} =
\hol{\alpha}(\reals)$ (resp.\ $\hol{\alpha}(\complexes)$) the space of
real-valued (resp.\ complex-valued) $\alpha$\nobreakdash-H{\"o}lder
continuous functions on $X$, and equip it with the norm
$\holnorm{\cdot}{\alpha}$ defined by
$\holnorm{f}{\alpha} = \supnorm{f} + \holconst{f}{\alpha}$ for $f \in
\hol{\alpha}$, where $\supnorm{f} \coleq \sup_{x \in X} |f(x)|$ and
\[
  \holconst{f}{\alpha} \coleq \sup_{x \neq y}
  \frac{|f(x) - f(y)|}{d(x,y)^{\alpha}}.
\]
When $\alpha = 1$, we write
$\lipconst{\cdot} = \holconst{\cdot}{1}$ and
$\lipnorm{\cdot} = \holnorm{\cdot}{1}$.

For a set $A$, denote by $\card{A}$ the cardinality of $A$ and,
for a set $Q \subset X$, let
$\diam{Q} \coleq \sup \{d(x,y) : x,y \in Q\}$ be the diameter of $Q$.
If $\partition$ is a collection of subsets of $X$,
we define $\diam{\partition} \coleq \sup_{Q \in \partition} \diam{Q}$.
We write $a \leqs_x b$ when there exists a constant $C > 0$ depending
on $x$ such that $a \leq Cb$. We write $a \leqs b$ if $C > 0$ is a
universal constant, i.e., it depends only on the system $(X,T,\mu,d)$.
We define $\geqs$ analogously, and $a \eqs b$ if $a \leqs b$ and $a
\geqs b$. For sequences $a_n$ and $b_n$, we write $a_n \sim b_n$ to
mean that $a_n$ and $b_n$ are asymptotically equivalent, i.e.,
$a_n/b_n \to 1$.

\subsubsection{Recurrence sets and radius functions}
In Result~\ref{enum:1}, it will be useful to consider the sets
$\exE_n = \{x \in X : T^{n}x \in B(x,r_n)\}$, where $(r_n)$ is a
sequence of scalars and $B(x,r)$ denotes the open ball with centre $x$
and radius $r$. In order to recover the CLT in Result~\ref{enum:2}, we
consider a reference sequence $(M_n)$ in $[0,1]$ and the corresponding
sets $\imE_n = \{x \in X : T^{n}x \in B(x,\imr_n(x))\}$, where the
functions $\imr_n \colon X \to [0,\infty)$ satisfy
$\mu(B(x,\imr_n(x))) = M_n$ $\mu$-a.e.\ $x$. We delay the question
of existence and briefly comment on the notation. The above notation is
used in
\cite{
  he-2024-quantitative,
  holland-2025-distributional,
  levesley-2024-shrinking
}, while in
\cite{
  persson-2023-strong,
  persson-2026-strong,
  ars-2025-recurrence
}, only the sets $\imE_n$ are studied and are therefore denoted by
$E_n$ within. Since we will be studying both sets, we opt for the above
notation for consistency and clarity.

\subsection{Main assumptions}
We now state the main assumptions, which will be used at different
times throughout the article. Briefly stated, we restrict ourselves to
rapidly mixing systems for which the measure satisfies some continuity
properties. For some results, we also assume a short returns estimate.

\begin{assumptiond}{D.1}[Spectral Gap]
\label{cond:transfer-operator}
  For every $\alpha \in (0,1]$, the transfer operator $P$ restricts to
  an operator $P \colon \hol{\alpha} \to \hol{\alpha}$ such that there
  exists $\spgap > 0$ for which
  \[
    \holnorm{P^{n}\varphi}{\alpha}
    \leqs_\alpha \holnorm{\varphi}{\alpha} e^{-\spgap n}
  \]
  for all $n \in \naturals$ and all $\varphi \in \hol{\alpha}$ with
  $\mu(\varphi) = 0$.
\end{assumptiond}

\begin{assumptiond}{D.2}[Decay of Correlations]
\label{cond:doc}
  For every $\alpha \in (0,1]$, there exists $\doc > 0$ such that,
  for all $n \in \naturals$ and all
  $\varphi,\psi \in \hol{\alpha}$,
  \[
    | \covar( \varphi, \psi \circ T^{n} ) | \leqs_\alpha
    \holnorm{\varphi}{\alpha} \holnorm{\psi}{\alpha} e^{-\doc n}.
  \]
\end{assumptiond}

\begin{assumptiond}{D.3}[Multiple Decorrelation]
\label{cond:multiple-doc}
  For every $\alpha \in (0,1]$, there exists $r \geq 1$, a constant
  $\mdoc > 0$, and a polynomial $P_r$ with real non-negative
  coefficients such that the following holds. For all integers
  $m,m',N \geq 0$, all integer sequences
  $0 \leq k_1 \leq \ldots \leq k_m$ and
  $0 \leq l_1 \ldots \leq l_{m'}$, and all functions
  $\varphi_1, \ldots, \varphi_{m+m'} \in \hol{\alpha}(\complexes)$,
  \begin{multline*}
    \covar \biggl( \prod_{i=1}^{m} \varphi_i \circ T^{k_i},
    \prod_{j=1}^{m'} \varphi_{m+j} \circ T^{N + l_j} \biggr) \\
    \leq \biggl( \prod_{i=1}^{m+m'} \supnorm{\varphi_i}
    + \sum_{i=1}^{m+m'} \holconst{\varphi_i}{\alpha} \prod_{j \neq i}
    \supnorm{\varphi_j} \biggr) P_r(l_{m'}) e^{-\mdoc (N - rk_m)}.
  \end{multline*}
\end{assumptiond}

\begin{assumptionm}{M.1}[Frostman property]
\label{cond:frostman}
  There exists $\frost > 0$ such that, for all $x \in X$ and $r > 0$,
  \[
    \mu\bigl( B(x, r) \bigr) \leqs r^{\frost}.
  \]
\end{assumptionm}

\begin{assumptionm}{M.2}[Thin annuli]
\label{cond:thin-annuli}
  There exists $\thinan, \thinanr > 0$ such that, for all $x \in X$ and
  $\varepsilon \leq r \leq \thinanr$,
  \[
    \mu\bigl( B(x, r + \varepsilon) \setminus B(x, r) \bigr)
    \leqs \varepsilon^{\thinan}.
  \]
\end{assumptionm}

\begin{assumptionsr}{SR.1}[Short returns for $(r_k)$]
\label{cond:sr:explicit}
  There exists $\srelog > 1$ and $\srein, \sreout > 0$ such that,
  for all $k \geq 1$ and all $l = 1,\ldots, (\log k)^{\srelog}$,
  \[
    \mu \bigl\{ x \in X : \mu\bigl( B(x,r_k) \cap T^{-l} B(x,r_k)
    \bigr) > \exAv_k^{1+\srein} \bigr\} \leqs \exAv_k^{\sreout},
  \]
  where $\exAv_k = \int_X \mu(B(x,r_k)) \diff\mu(x)$.
\end{assumptionsr}

\begin{assumptionsr}{SR.2}[Short returns for $(\imr_k)$]
\label{cond:sr:implicit}
  There exists $\srilog > 1$ and $\sriin, \sriout > 0$ such that,
  for all $k \geq 1$ and all $l = 1,\ldots, (\log k)^{\srilog}$,
  \[
    \mu \bigl\{ x \in X : \mu\bigl( B(x,\imr_k(x)) \cap T^{-l}
    B(x,\imr_k(x)) \bigr)
    > M_k^{1+\sriin} \bigr\} \leqs M_k^{\sriout}.
  \]
\end{assumptionsr}

We will frequently, though not always, rely on the following condition,
where $\seqlow, \seqhigh \geq 0$ are constants to be specified,
and $(a_n)$ will be one of the sequences $(r_n)$ or $(M_n)$.

\begin{assumptions}{$\scal(\seqlow,\seqhigh)$}
\label{cond:sequence}
  The sequence $(a_n)$ is monotone decreasing and, for all
  $n \in \naturals$,
  \[
    \frac{1}{n^{\seqlow}} \leqs a_n \leqs \frac{1}{(\log n)^{\seqhigh}}.
  \]
\end{assumptions}

\subsection{Discussion of the assumptions}
We now discuss the assumptions, giving examples of systems satisfying
them, and comparing with previous works.

\subsubsection{Recurrence sets and radius functions}
\label{sec:discussion:radius}
In establishing statistical properties of the sum
$S_n(x) = \sum_{k=1}^{n}\charfun_{B(x,r_k)}(T^{k}x)$,
it becomes important to obtain good estimates of the measure and
correlations of the sets $(\exE_n)$.
In general, this is difficult for two reasons: we cannot
directly exploit the $T$-invariance of $\mu$ since the balls $B(x,r_n)$
(clearly) depend on $x$; and the measure of $B(x,r_n)$ may vary wildly
as $x$ changes. For the first issue, it is possible to use decay of
correlations to show that
$\mu(\exE_n) \approx \int \mu(B(x,r_n)) \diff\mu(x)$. However, the
second issue makes correlation estimates difficult to obtain in
general. If we assume that $\mu$ is absolutely continuous with respect
to $\leb$, then it is possible to obtain correlation estimates under
some conditions on the density.

In order to consider more general measures, we study the sets
$(\imE_n)$ defined in terms of the reference sequence $(M_n)$ and
rescaled balls $B(x,\imr_n(x))$. This alleviates issues arising from
$\mu$ not being translation-invariant at the cost of having to work
with implicitly defined radii $\imr_n(x)$, which a priori may not even
exist for some $x$. However, if we define
$\imr_n(x) = \inf \{r \geq 0 : \mu(B(x,r)) \geq M_n\}$, then it is
possible to show that $\imr_n$ is $1$-Lipschitz for all $n \geq 1$.
Furthermore, if $M_n \to 0$, then $\imr_n \to 0$ uniformly on
$\supp \mu$.
Under Condition~\ref{cond:thin-annuli}, we have $\imr_n \leq \thinanr$ on
$\supp \mu$ for all sufficiently large $n$. Since the boundary of any
ball of radius less than $\thinanr$ has $0$ measure, we have
$\mu(B(x,\imr_n(x))) = M_n$ for all $x \in \supp \mu$
\cite[Lemma~3.1]{ars-2025-recurrence}.
More generally, even without \ref{cond:thin-annuli}, if $\mu$ is
non-atomic, then $\mu(B(x,\imr_n(x))) = M_n$ $\mu$-a.e.\ $x$.

\begin{remark}
The use of $\imr_n$ originated in Kirsebom, Kunde and
Persson~\cite{kirsebom-2023-shrinking} in which they are used to
obtain Borel--Cantelli lemmas for measures singular with respect to the
Lebesgue measure.
Later, the recurrence sets $(\imE_n)$ formed the main objects of study
in \cite{persson-2023-strong,persson-2026-strong,ars-2025-recurrence}.
Meanwhile, the functions $\imr_k$ have been used in
\cite{he-2023-quantitative,he-2024-quantitative,huang-2025-exponential}
to obtain results for the explicit radius problem.
In this paper, we state and prove distributional results for both $r_k$
and $\imr_k$, but neither is used to derive the other.
\end{remark}

\subsubsection{Decay of correlations}
Condition~\ref{cond:transfer-operator} holds when the transfer
operator has a spectral gap, and implies decay of correlations for
H{\"o}lder versus $\Ell{1}$ observables, i.e., for all
$\varphi \in \hol{\alpha}$ and $\psi \in \Ell{1}$,
\begin{equation}
\label{eq:doc:transfer-operator}
  | \covar ( \varphi, \psi \circ T^{n} ) |
  \leqs_\alpha \holnorm{\varphi}{\alpha} \lnorm{\psi}{1} e^{-\spgap n}.
\end{equation}
Systems for which the transfer operator has a spectral gap include
sub-shifts of finite type
(see e.g.\ Bowen~\cite{bowen-2008-equilibrium}), piecewise
uniformly expanding maps on $[0,1]$ equipped with an absolutely
continuous invariant measure
(see \cite{lasota-1973-existence, rychlik-1983-bounded}),
and multi-dimensional expanding maps
(see \cite{saussol-2000-absolutely}).

For invertible systems (such as Axiom~A systems), the transfer operator
is given by $Pf = f \circ T^{-1}$ and as such one cannot hope for
Condition~\ref{cond:transfer-operator} to hold.
In such cases, we will assume \ref{cond:doc}~-- a weaker condition~--
which, in addition to the aforementioned systems, is known to hold for
for piecewise uniformly expanding maps on $[0,1]$ equipped with Gibbs
measures (see \cite{liverani-1998-conformal}), and Axiom~A
diffeomorphisms equipped with equilibrium states corresponding to a
H{\"o}lder continuous potential (see \cite{bowen-2008-equilibrium}). It
is a standard assumption when considering statistical properties of
dynamical systems.

To prove the distributional laws and to establish estimates of
$\mu(\exE_k \cap \exE_j)$ and $\mu(\imE_k \cap \imE_j)$, we will assume
higher-order mixing in the form of Condition~\ref{cond:multiple-doc}.
While it is not a pure exponential decay in $N$ due to the polynomial
$P_r$ and the $rk_m$ term in the exponential, the power of the
statement lies in the fact that $P_r$ and $\mdoc$ do not depend on the
number, $m+m'$, of observables. When proving the
distributional laws, we take full advantage of this fact, while when
estimating $\mu(\exE_k \cap \exE_j)$ and $\mu(\imE_k \cap \imE_j)$, we
only use the condition restricted to $m+m'=3$ observables.

The form of \ref{cond:multiple-doc} we use is due to
P\`ene~\cite[Property {$(\pcal_r)$}]{pene-2004-multiple}.
We stress that the constant $\mdoc$ and polynomial $P_r$ are uniform
in the number of observables $m+m'$, which will be essential in our
proofs of the distributional results.
We note that Conze and Le~Borgne~\cite[Property~3.1]{conze-2011-limit},
on which our method is based, assume \ref{cond:multiple-doc} with
$r = 1$ and $P_r = C$.
In \cite{pene-2000-thesis}, P\`ene establishes \ref{cond:multiple-doc}
for sub-shifts of finite type equipped with Gibbs measures (in fact
$P_r$ is constant in this case). Using a standard argument
\cite{bowen-2008-equilibrium}, one can show that the condition extends
to Axiom~A diffeomorphisms equipped with Gibbs measures corresponding
to H{\"o}lder continuous potential.
For dynamical systems to which Young's method of
\cite{young-1998-statistical} can be applied, it can be shown that
\ref{cond:multiple-doc} is satisfied for any $r > 1$
\cite[Corollary~B.2]{pene-2002-rates}. This version holds for
billiards with bounded horizon and no corners
\cite[Theorem~7.41]{chernov-2006-chaotic} with `dynamically H{\"o}lder
continuous observables'.

By contrast, in other (more familiar) versions of
\emph{multiple decorrelation}
\cite{
  bjorklund-2020-quantitative,
  dolgopyat-2004-limit,
  kotani-2001-pressure
}
the multiplicative constant $P_r = C$ depends on $m+m'$,
which is insufficient for our approach. More precisely,
Kotani and Sunada~\cite{kotani-2001-pressure} prove the non-uniform
multiple decorrelation property for Axiom~A
diffeomorphisms equipped with Gibbs measures corresponding to a
H{\"o}lder continuous observable;
Dolgopyat~\cite{dolgopyat-2004-limit} proves it for hyperbolic systems
for partially hyperbolic systems that are `strongly $u$-transitive with
exponential rate';
and Bj{\"o}rklund, Einsiedler and
Gorodnik~\cite{bjorklund-2020-quantitative} prove it for semi-simple
Lie groups, semi-simple $S$-algebraic groups, and semi-simple adele
groups.

\subsubsection{Continuity of measure}
In addition to the good mixing properties, we require
the measure to be sufficiently smooth.
Conditions~\ref{cond:frostman} and \ref{cond:thin-annuli} are used to
obtain good H{\"o}lder continuous approximations of indicator functions
of balls. For example, for
$\charfun_{E_n}(x) = \charfun_{B(x,\imr_k(x))}(T^{k}x)$,
we will obtain approximations
$g_k(x,y) \approx \charfun_{B(x,\imr_k(x))}(y)$
such that
$\iint g_k \diff\mu\diff\mu \approx M_n$
and
$\int g_k(x,T^{k}x) \diff\mu(x) \approx \mu(E_k)$.

Conditions~\ref{cond:frostman} and \ref{cond:thin-annuli} are standard
assumptions; they are assumed, either implicitly or explicitly, in the
aforementioned works establishing RSBCs
\cite{
  he-2024-quantitative,
  holland-2025-distributional,
  levesley-2024-shrinking,
  persson-2023-strong,
  persson-2026-strong,
  ars-2025-recurrence
}.
If $\mu$ is absolutely continuous with density $h \in L^{q}(\leb)$ for
some $q > 1$, then both conditions hold. Furthermore, if $X$ is a
Riemannian manifold with dimension $\dim X$ and $\mu$ satisfies
\ref{cond:frostman} for $\frost > \dim X - 1$, then
\ref{cond:thin-annuli} holds for $\thinan = \dim X + 1 - s$ (see the
proof of Theorem~1.1 in \cite{ars-2025-recurrence}). For instance, this
applies to Gibbs measures for finitely-branched uniformly expanding
systems on the interval.
For self-conformal systems on $\reals^{n}$, \ref{cond:frostman} and
\ref{cond:thin-annuli} are satisfied
\cite[Theorem~5.1]{huang-2025-exponential}.

For hyperbolic toral automorphisms, we can perturb the potentials
of Gibbs measures satisfying Conditions~\ref{cond:frostman} and
\ref{cond:thin-annuli} and obtain other Gibbs measures satisfying the
conditions (with different constants). Indeed, suppose the Gibbs
measure $\mu_0$ corresponding to a H{\"o}lder continuous potential
$\varphi_0$ satisfies \ref{cond:frostman} and \ref{cond:thin-annuli}
(e.g., take $\mu_0 = \leb$). Let $\psi \in C^{\infty}$ and write
$\varphi_\varepsilon = \varphi_0 + \varepsilon \psi$, and
$\mu_\varepsilon$ for the corresponding Gibbs measure.
Using that balls are sufficiently well-approximated by cylinders for
hyperbolic toral automorphisms, the Gibbs property implies that
exists $\varepsilon_0 > 0$ such that, for all
$\varepsilon < \varepsilon_0$, the measure $\mu_\varepsilon$
satisfies \ref{cond:frostman} and \ref{cond:thin-annuli}.

\subsubsection{Short returns}
Conditions~\ref{cond:sr:explicit} and \ref{cond:sr:implicit} 
are forms of short returns estimates. Informally, the conditions assert
that rapidly recurrent neighbourhoods are relatively rare.
These conditions are used in obtaining strong variance
estimates, which are then used in verifying the assumptions in the
distributional results.
A version of Condition~\ref{cond:sr:explicit} first appears in
Collet~\cite{collet-2001-statistics}, who establishes it using a
maximal function technique. Since then, several versions for various
systems have been considered, e.g., see
\cite{
  gupta-2011-extreme,
  holland-2024-dichotomy, 
  holland-2012-extreme, 
  holland-2016-quantitative 
}.
We verify these conditions for Gibbs measures
corresponding to expanding interval systems, and for hyperbolic toral
automorphisms in Section~\ref{sec:sr:proof}.

We will show that (see Lemma~\ref{lem:r-explicit:sr}), for absolutely
continuous Gibbs measures, \ref{cond:sr:explicit} implies
\[
  \mu(\hat{E}_k \cap \hat{E}_{k+l}) \leqs \mu(\hat{E}_k)^{1+\sremin}
  \quad k \geq 1,\ l=1,\ldots,(\log k)^{\srelog}
\]
for some $\sremin > 0$.
When working with the rescaled recurrence sets, \ref{cond:sr:implicit}
is the more natural condition. Similarly, \ref{cond:sr:implicit}
implies, for more general Gibbs measures,
\[
  \mu(E_k \cap E_{k+l}) \leqs \mu(E_k)^{1+\srimin}
  \quad k \geq 1,\ l=1,\ldots,(\log k)^{\srilog}
\]
for some $\srimin > 0$. In \cite{persson-2026-strong}, a similar
estimate is shown for expanding systems on the interval equipped with
Gibbs measures (see Lemma~6 within), and hyperbolic automorphisms of
$\torus^2$ equipped with the Lebesgue measure (see Lemma~8 within).

\subsubsection{Sequence assumption}
Throughout the present paper, we will impose conditions on our targets.
When considering the distributional law for $(r_n)$, we will impose
conditions on $(r_n)$ since the measure is assumed to be absolutely
continuous. When considering $(\imr_n)$, we will instead impose
conditions on the reference sequence $(M_n)$. For the shrinking target
problem, we impose conditions on the measure $\mu(B_n)$ of the targets.
Usually, we assume that the targets satisfy \ref{cond:sequence}
for some $\seqlow$ and $\seqhigh > 1$. In particular, $\seqlow$ will be
chosen such that
\[
  \sum_{k=1}^{n} r_k^{\dim X} = \infty,
  \quad
  \sum_{k=1}^{\infty} M_k = \infty
  \quad \text{or} \quad
  \sum_{k=1}^{\infty} \mu(B_k) = \infty.
\]
The most restrictive part of Condition~\ref{cond:sequence} is the lower
decay rate $n^{-\seqlow}$, which is used to ensure a strong enough lower
bound on the variance growth. As such, we cannot relax this condition
in the present paper to just the divergence of the above sums.
The upper decay rate $(\log n)^{-\seqhigh}$ is used to control
short-term correlations, where the mixing conditions of \ref{cond:doc}
or \ref{cond:multiple-doc} are too weak to give a useful bound.

\section{Distributional laws for Poincar\'e recurrence}
\label{sec:clt}
We devote this section to distributional laws for recurrence.
We begin with the non-standard ALT, and then proceed to the
CLT for the appropriately altered sum.

\subsection{The ALT for recurrence}
\label{sec:clt:alt}
We begin by stating the ALT result.
Recall the auxiliary sets
$\exE_n = \{x \in X : T^{n}x \in B(x,r_n)\}$, and let
\begin{align*}
  \exsig_n^2 &\coleq \var\biggl( x \mapsto \sum_{k=1}^{n}
    \charfun_{B(x,r_k)}(T^{k}x) - \mu(B(x,r_k)) \biggr), \\
  \exs_n^2(y) &\coleq \var\biggl( x \mapsto \sum_{k=1}^{n}
    \charfun_{B(y,r_k)} (T^{k}x) - \mu(B(y,r_k)) \biggr).
\end{align*}

\begin{theorem}[ALT for recurrence]
\label{thm:alt}
  Assume that $(X,T,\mu)$ is an m.p.s.\ satisfying
  Condition~\ref{cond:multiple-doc} and such that the following holds:
  \begin{enumerate}[label=(\roman*)]
    \item
      $d\mu = h \diff\leb$ for a density $h \in \Ell{q}(\leb)$
      satisfying $h \geq c > 0$ for some $q \geq 2$ and $c > 0$;
    \item \label{item:alt:r}
      $(r_n)$ satisfies Condition~\ref{cond:sequence} for some
      $\seqlow \in (0,1/\dim X)$ and $\seqhigh > p/\dim X$, where $p$ is
      the H{\"o}lder conjugate of $q$;
    \item \label{item:alt:clt}
      The corresponding shrinking target problem satisfies the CLT,
      i.e.,
      \[
        \frac{1}{\exs_n(y)} \sum_{k=1}^{n} \bigl( \charfun_{B(y,r_k)}
        \circ T^{k} - \mu(B(y,r_k)) \bigr) \todistr N(0,1)
        \quad \text{$\mu$-a.e.\ $y \in X$};
      \]
    \item \label{item:alt:var}
      The variances satisfy the limit
      \[
        \lim_{n \to \infty}
        \int_{X} \biggl| \frac{\exs_n(y)}{\exsig_n}
        - \frac{\sqrt{h(y)}}{\sqrt{\mu(h)}} \biggr| \diff\mu(y) = 0.
      \]
  \end{enumerate}
  Then the sum
  \begin{equation}
  \label{eq:alt:sum}
    \frac{1}{\exsig_n} \sum_{k=1}^{n}
    \charfun_{B(x,r_k)}(T^{k}x) - \mu(B(x,r_k))
  \end{equation}
  satisfies \eqref{alt}, i.e., the characteristic function of the
  sum converges to
  \[
    \int_{X} \exp \Bigl( - \frac{h(x)}{2\mu(h)} t^2 \Bigr) \diff\mu(x)
    \quad \text{for all $t \in \reals$}.
  \]
  This limit is not Gaussian unless $h = 1$.
\end{theorem}

Since $h$ is bounded away from $0$, the limiting distribution has
probability density function $f \colon \reals \to (0,\infty)$ given by
\[
  f(t) = \frac{1}{\sqrt{2\pi}} \int_X
  \frac{\sqrt{\mu(h)}}{\sqrt{h(y)}}
  \exp\Bigl(-\frac{\mu(h)}{2h(y)}t^2\Bigr) \diff\mu(y)
\]
by the Fourier inversion formula.

Note that $q \geq 2$ is necessary for the limit law in the theorem to
be well-defined since $\mu(h) = \leb(h^2)$. Under
Condition~\ref{cond:sr:explicit}, we prove
hypothesis~\ref{item:alt:var} when
$q > 2$ in Proposition~\ref{prop:r-explicit:L1}.
By further restricting the upper decay rate of the sequence $(r_n)$,
we verify \ref{cond:sr:explicit} in
\text{\S}\ref{sec:srt:proof:explicit} for selected systems.

\begin{remark}[Simplification for expanding maps]
\label{rem:alt:bv}
  In the case of expanding systems on the interval,
  Condition~\ref{cond:multiple-doc} can be relaxed to a version
  involving $\bv$ observables in place of H{\"o}lder continuous ones.
  As a result, the proof of Theorem~\ref{thm:alt} becomes simpler.
\end{remark}

\begin{remark}[Non-standard centring]
  We note that we use a non-standard centring in \eqref{eq:alt:sum},
  i.e., we centre by subtracting $\mu(B(x,r_k))$ which depends on $x$,
  rather than
  $\mu(\exE_k) = \int \charfun_{B(x,r_k)}(T^{k}x) \diff\mu(x)$.
  The fact that $\mu$ is not translation-invariant means that the
  former is the correct centring of the observables.
  We also note that He~\cite{he-2024-quantitative} uses a non-standard
  centring when proving
  \[
    \lim_{n \to \infty} \frac{\sum_{k=1}^{n}
    \charfun_{B(x,r_k)}(T^{k}x)}{\sum_{k=1}^{n} \mu(B(x,r_k))}
    = 1 \quad \text{$\mu$-a.e.\ $x$}.
  \]
\end{remark}

\begin{remark}
  We compare Theorem~\ref{thm:alt} with \eqref{eq:poisson} due to
  Holland and Todd~\cite{holland-2025-distributional}
  since both results exhibit the same averaging phenomenon. That is,
  both limiting distributions are averages of the corresponding
  pointwise laws weighted by the density of the measure.
  In \cite{holland-2025-distributional}, the
  pointwise law is Poisson with variance $\tau h(x)$,
  while in our case, the pointwise law is Gaussian with variance
  $h(x)/\mu(h)$. In both cases, the averaging occurs since $\mu$ is not
  translation-invariant while the targets depend on $x$.

  Although not directly applicable to our non-autonomous setting, Conze
  and Le~Borgne~\cite{conze-2011-limit} also exhibits an averaging
  phenomenon in \eqref{eq:conze}, which is also an average of Gaussian
  distributions.
\end{remark}

\subsection{The CLT for recurrence}
\label{sec:clt:clt}
We now state the CLT for the sums of featuring the appropriately scaled
radii. The rescaling allows us to both recover the CLT and obtain
results for measures that do not need to be absolutely continuous.
Recall that the auxiliary sets $(\imE_n)$ are defined with respect to a
fixed reference sequence $(M_n)$. That is,
\[
  \imE_n = \{x \in X : T^{n}x \in B(x,\imr_n(x))\},
  \quad \text{where } \mu\bigl( B(x,\imr_n(x)) \bigr) = M_n.
\]
Furthermore, let
\[
  \imsig_n^2 \coleq \var \biggl( \sum_{k=1}^{n} \charfun_{\imE_k}
    \biggr)
  \quad \text{and} \quad
  \ims_n^2(y) \coleq \var \biggl( \sum_{k=1}^{n}
    \charfun_{B(y,\imr_k(y))} \circ T^{k} \biggr).
\]

\begin{theorem}[CLT for recurrence]
\label{thm:clt}
  Assume that $(X,T,\mu)$ is an m.p.s.\ satisfying
  Conditions~\ref{cond:multiple-doc}
  such that the following holds:
  \begin{enumerate}[label=(\roman*)]
    \item \label{item:clt:measure}
      $\mu$ satisfies Conditions~\ref{cond:frostman} and
      \ref{cond:thin-annuli};
    \item
      $(M_n)$ satisfies Condition~\ref{cond:sequence} for some
      $\seqlow \in (0,1)$ and $\seqhigh > 1$;
    \item \label{item:clt:clt}
      The corresponding shrinking target problem satisfies the CLT,
      i.e.,
      \[
        \frac{1}{\ims_n(y)} \sum_{k=1}^{n} \bigl(
          \charfun_{B(y,\imr_k(y))} \circ T^{k} - M_k \bigr) \todistr
          N(0,1)
        \quad \text{$\mu$-a.e.\ $y \in X$};
      \]
    \item \label{item:clt:var}
      The variances satisfy the distributional limit
      \[
        \frac{\ims_n^2}{\imsig_n^2} \todistr 1,
      \]
      where we consider $\ims_n^2 \colon y \mapsto \ims_n^2(y)$ as a
      random variable on $(X,\mu)$.
  \end{enumerate}
  Then
  \[
    \frac{1}{\imsig_n} \sum_{k=1}^{n}
    \charfun_{B(x,\imr_k(x))}(T^{k}x) - \mu\bigl( B(x,\imr_n(x)) \bigr)
  \]
  converges in distribution to a standard Gaussian variable.
\end{theorem}

Under Condition~\ref{cond:sr:implicit}, we can verify
hypothesis~\ref{item:clt:var} while slightly restricting the upper
decay rate of $(M_n)$.
For selected systems, we verify Condition~\ref{cond:sr:implicit} in
\text{\S}\ref{sec:srt:proof:implicit} with no further restrictions on
$(M_n)$.
See Remark~\ref{rem:alt:bv} above for the $\bv$ simplification, which
also applies here.

\begin{remark}[Difference between ALT and CLT]
  The main difference in the hypotheses of Theorems~\ref{thm:alt} and
  \ref{thm:clt} is in hypothesis \ref{item:clt:measure}, which allows
  one to consider more general measures.
  It is precisely this relaxation of absolute continuity of
  $\mu$ that leads us to consider the rescaled balls $B(x,\imr_n(x))$. 
  In addition to being a more tractable set for non-translation
  invariant measures, the rescaled balls allow us to relax
  $\Ell{1}(\mu)$ convergence to convergence in distribution in
  hypothesis~\ref{item:clt:var}. (Since the limiting distribution is
  constant, convergence in distribution is equivalent to convergence in
  measure.)
  By considering the rescaled balls, we also recover the `expected'
  result.
  Thus, the rescaled balls $B(x,\imr_n(x))$ are the `correct' targets
  to consider if one wants to recover the `expected results'.
  They also have the added benefit of relaxing the hypotheses.
\end{remark}

\begin{remark}
  We note that Theorem~\ref{thm:clt} is the first CLT for recurrence in
  the setting of the targets $B(x,\imr_n(x))$.
  The only previous limit laws for $(\imE_n)$ are strong
  Borel--Cantelli lemmas (SLLNs) as established in
  \cite{
    huang-2025-exponential,
    persson-2023-strong,
    persson-2026-strong,
    ars-2025-recurrence
  }.
\end{remark}

\section{The ASIP for the shrinking target problem}
\label{sec:asip}
We devote this section to the ASIP results. Our main contribution is
Theorem~\ref{thm:asip:hyperbolic:targets} -- the ASIP for the shrinking
target problem in Axiom~A systems. In the proof, we reduce the problem
to a more general ASIP for non-autonomous H{\"o}lder continuous
observables (Theorem~\ref{thm:asip:hyperbolic}), which in turn is
reduced to a corresponding for non-invertible systems
(Theorem~\ref{thm:asip:non-invertible:sup-growth}).
We also obtain the ASIP for the shrinking target problem for systems
for which the transfer operator is quasi-compact on the space of
H{\"o}lder continuous functions
(Theorem~\ref{thm:asip:non-invertible:targets}).

\subsection{The ASIP for Axiom~A systems}
We begin with the Axiom~A results, stating the general result first.

\begin{theorem}
\label{thm:asip:hyperbolic}
  Suppose that $T \colon X \to X$ is a topologically mixing Axiom~A
  diffeomorphism, and $\mu$ is a Gibbs measure corresponding to a
  H{\"o}lder continuous potential.
  Assume $(\varphi_n)$ is a sequence of $\alpha$\nobreakdash-H{\"o}lder
  continuous functions such that
  \begin{enumerate}[label=(\roman*)]
    \item
      there exists $p > 0$ such that
      \[
        \sup_{n \geq 1} \supnorm{\varphi_n} < \infty
        \quad \text{and} \quad
        \holconst{\varphi_n}{\alpha} \leqs n^{p};
      \]
    \item
      there exists $\delta > 1/2$ such that
      \[
        \asipvar_n^2 \coleq \var\biggl( \sum_{k=1}^{n} \varphi_k \circ
        T^{k}\biggr) \geqs n^{\delta}.
      \]
  \end{enumerate}
  Then $(\tilde{\varphi}_n \circ T^{n})$ satisfies \eqref{asip} for any
  $\beta < \min \{\frac{1}{2}, 1 - \frac{1}{2\delta}\}$ with
  $\sum_{k=1}^{n} \EE[Z_k^2] = \asipvar_n^2 + \bigo(\asipvar_n (\log
  \asipvar_n)^2)$.
\end{theorem}

\begin{remark}
  An analysis of the proof of Theorem~\ref{thm:asip:hyperbolic} gives
  the following improvement.
  If we replace $\sup_{n \geq 1} \supnorm{\varphi_n} < \infty$
  with $\supnorm{\varphi_n} \leqs (\log n)^{q}$ for some $q > 0$,
  then the conclusion holds with error
  $\bigo(\asipvar_n (\log \asipvar_n)^{2+q})$.
\end{remark}

\begin{remark}
  We compare with the existing works
  \cite{chen-2018-non-stationary,haydn-2017-almost}.
  Theorem~\ref{thm:asip:hyperbolic} improves
  \cite[Corollary~6.2]{haydn-2017-almost} in two ways.
  Firstly, we allow the sequence $(\holconst{\varphi_n}{\alpha})$
  of H{\"o}lder constants to grow at a polynomial rate.
  Secondly, we relax the condition on $\delta$ from
  $\delta > \frac{\sqrt{17}-1}{4} \approx 0.781$ to $\delta > 1/2$.
  In \cite[Theorem~1]{chen-2018-non-stationary},
  $2$-dimensional non-uniformly hyperbolic systems with
  singularities are considered, but with $\mu$ restricted to the
  SRB measure. It is also required that $\delta > 1/2$.
\end{remark}

Theorem~\ref{thm:asip:hyperbolic} can now be applied to the shrinking
target problem for targets that can be sufficiently well-approximated
by H{\"o}lder continuous functions.

\begin{theorem}
\label{thm:asip:hyperbolic:targets}
  Suppose that $T \colon X \to X$ is a topologically mixing Axiom~A
  diffeomorphism, and $\mu$ is a Gibbs measure corresponding to a
  H{\"o}lder continuous potential.
  Let $(B_n)$ be a sequence of balls with centres in $\supp\mu$, and
  write $\asipmeas_n = \mu(B_n)$ and
  $\asipvar_n^2 = \var( \sum_{k=1}^{n} \charfun_{B_k} \circ T^{k})$.
  Assume that
  \begin{enumerate}[label=(\roman*)]
    \item
      $\mu$ satisfies Condition~\ref{cond:frostman} and
      \ref{cond:thin-annuli};
    \item \label{item:asip:hyperbolic:targets:var}
      $(\asipmeas_n)$ satisfies Condition~\ref{cond:sequence} for some
      $\seqlow < 1/2$ and $\seqhigh > 1$.
  \end{enumerate}
  Then $(\tilde{\charfun}_{B_n} \circ T^{n})$ satisfies \eqref{asip}
  for any $\beta < 1 - \frac{1}{2(1-\seqlow)}$ with
  $\sum_{k=1}^{n} \EE[Z_k^2] = \asipvar_n^2 + \bigo(\asipvar_n (\log
  \asipvar_n)^2)$.
\end{theorem}

\begin{remark}
  Theorem~\ref{thm:asip:hyperbolic:targets} establishes the ASIP
  for the shrinking target problem for Axiom~A systems equipped with
  Gibbs measures, which is, to the author's knowledge, the first such
  result.
  The analogous result for systems whose transfer operator is
  quasi-compact on $\bv$ is established in
  \cite[Theorem~5.1]{haydn-2017-almost}. For $2$-dimensional
  non-uniformly hyperbolic systems with singularities,
  \cite[Theorem~18]{chen-2018-non-stationary} proves the corresponding
  result for the SRB measure and targets whose boundaries are contained
  in the singular set.
\end{remark}

\subsection{The ASIP for non-invertible systems}
As mentioned previously, the strategy for proving the ASIP for Axiom~A
systems relies on reducing the problem to that of non-invertible
systems via the Markov partition provided by the Axiom~A structure.
This is done using a modified `Sinai trick', which introduces
a logarithmic growth of $\supnorm{\varphi_n^{*}}$ of the corresponding
functions $\varphi_n^{*}$ on the shift space due to the polynomial
growth of $\holconst{\varphi_n}{\alpha}$.
We account for this in the following result.

\begin{theorem}
\label{thm:asip:non-invertible:sup-growth}
  Suppose that $(X,T,\mu)$ is an m.p.s.\ satisfying
  Condition~\ref{cond:transfer-operator}.
  Assume $(\varphi_n)$ is a sequence of $\alpha$\nobreakdash-H{\"o}lder
  continuous observables such that
  \begin{enumerate}[label=(\roman*)]
    \item
      there exists $p > 0$ such that
      \[
        \supnorm{\varphi_n} \leqs \log n
        \quad \text{and} \quad
        \holconst{\varphi_n}{\alpha} \leqs n^{p};
      \]
    \item
      there exists $\delta > 1/2$ such that
      \[
        \asipvar_n^2 \coleq \var\biggl( \sum_{k=1}^{n} \varphi_k \circ
        T^{k}\biggr) \geqs n^{\delta}.
      \]
  \end{enumerate}
  Then $(\tilde{\varphi}_n \circ T^{n})$ satisfies \eqref{asip} for any
  $\beta < \min \{\frac{1}{2}, 1 - \frac{1}{2\delta}\}$ with 
  $\sum_{k=1}^{n} \EE[Z_k^2] = \asipvar_n^2 + \bigo(\asipvar_n (\log
  \asipvar_n)^2)$.
\end{theorem}

\begin{remark}
  Theorem~\ref{thm:asip:non-invertible:sup-growth} relaxes the uniform
  bounds in $\supnorm{\cdot}$ and $\holconst{\cdot}{\alpha}$ present in 
  \cite[Theorem~6.1]{haydn-2017-almost}. By doing this, we can then
  use our results for the Axiom~A setting. At the same time, we
  relax the condition
  $\delta > \frac{\sqrt{17}-1}{4} \approx 0.781$ to $\delta > 1/2$.
\end{remark}

The logarithmic growth of $\supnorm{\varphi_n}$ in
Theorem~\ref{thm:asip:non-invertible:sup-growth} matches what arises
from the Sinai-trick reduction in the proof of
Theorem~\ref{thm:asip:hyperbolic}.
When the observables take the form of indicator functions of balls, we
can relax the lower bound $\delta > 1/2$ to $\delta > 0$.

\begin{theorem}
\label{thm:asip:non-invertible:targets}
  Suppose that $(X,T,\mu)$ is an m.p.s.\ satisfying
  Condition~\ref{cond:transfer-operator}.
  Let $(B_n)$ be a sequence of balls with centres in $\supp\mu$, and
  write $\asipmeas_n = \mu(B_n)$ and
  $\asipvar_n^2 = \var( \sum_{k=1}^{n} \charfun_{B_k} \circ T^{k} )$.
  Assume that
  \begin{enumerate}[label=(\roman*)]
    \item
      $\mu$ satisfies Condition~\ref{cond:frostman} and
      \ref{cond:thin-annuli};
    \item
      $(\asipmeas_n)$ satisfies Condition~\ref{cond:sequence} for some
      $\seqlow \in (0,1)$ and $\seqhigh > 1$.
  \end{enumerate}
  Then $(\tilde{\charfun}_{B_k} \circ T^{k})$ satisfies \eqref{asip}
  for any $\beta < 1/2$ with
  $\sum_{k=1}^{n} \EE[Z_k^2] = \asipvar_n^2 + \bigo(\asipvar_n \log
  \asipvar_n)$.
\end{theorem} 

\begin{remark}
  Under the additional assumption that Conditions~\ref{cond:frostman}
  and \ref{cond:thin-annuli} hold,
  Theorem~\ref{thm:asip:non-invertible:targets} extends
  \cite[Theorem~5.1]{haydn-2017-almost} to systems where the transfer
  operator is quasi-compact on the space of H{\"o}lder continuous
  functions rather than the space of $\bv$ functions.
\end{remark}

\section{Variance estimates}
\label{sec:variance}
The results of Section~\ref{sec:clt} feature the variance estimates in
the hypotheses.
We devote this section to estimates of the variances, culminating in
results verifying the hypotheses.
We begin with measure, correlation and variance estimates for
\text{\S}\ref{sec:clt:alt}, that is, estimates involving the sets
\[
  \exE_n \coleq \{x \in X : T^{n}x \in B(x,r_n)\}.
\]
We see that correlation -- and as a result variance -- estimates for
the sets $\exE_n$ are difficult to work with, even when the measure is
absolutely continuous but not equal to the Lebesgue measure.
For example, it is generally not true that
$\mu(\exE_k \cap \exE_j) \approx \mu(\exE_k) \mu(\exE_j)$
when $k$ and $j-k$ are large.

As we will see, the sets
\[
  \imE_n \coleq \{x \in X : T^{n}x \in B(x,\imr_n(x))\}
\]
with the rescaled radii satisfy stronger correlation estimates, in
particular, \emph{it is true} that
$\mu(\imE_k \cap \imE_j) \approx \mu(\imE_k) \mu(\imE_j)$
when $k$ and $j-k$ are large. We can therefore obtain stronger variance
estimates as well.

\subsection{Variance estimates for Section~\ref{sec:clt:alt}}
\label{sec:var:explicit}
We state estimates for the recurrence problem in
\text{\S}\ref{sec:clt:alt}, that is when $(r_k)$ is a fixed sequence in
$[0,\infty)$.
For the shrinking target problem, the hitting sets are of the form
$A_n = \{x \in X : T^{n}x \in B(y_n,r_n)\} T^{-n}B(y_n,r_n)$. Since $T$
preserves $\mu$, we obtain $\mu(A_n) = \mu(B(y_n,r_n))$ directly.
In the recurrence setting, the targets depend on the initial point and
we can therefore not directly use the measure-preserving property.
Nevertheless, for sufficiently mixing systems, one can obtain estimates
for $\mu(\exE_n)$ in terms of $\int \mu(B(x,r_n)) \diff\mu(x)$.
The following lemma can be found in the work of
Kirsebom, Kunde and Persson~\cite{kirsebom-2023-shrinking} for
systems satisfying decay of correlations for $\bv$ vs.\ $\Ell{1}$
observables (see the proof of Theorem~\textup{C} within).
A simple rewriting of \cite[Proposition~4.1]{ars-2025-recurrence}
establishes the lemma for for systems satisfying
decay of correlations for H{\"o}lder continuous observables (that is,
Condition~\ref{cond:doc}).

\begin{lemma}[\cite{kirsebom-2023-shrinking}]
\label{lem:r-explicit:meas}
  Suppose $(X,T,\mu)$ is an m.p.s.\ and assume that
  \begin{enumerate}[label=(\roman*)]
    \item
      Condition~\ref{cond:doc} holds;
    \item
      $d\mu = h \diff\leb$ with density $h \in \Ell{q}(\leb)$ such
      that $h \geq c > 0$ for some $q > 1$ and $c > 0$;
    \item
      $r_k \to 0$.
  \end{enumerate}
  Then there exists a constant $\lexmeas > 0$ such that
  \[
    \Bigl| \mu(\exE_n) - \int_X \mu(B(x,r_n)) \diff\mu(x) \Bigr|
    \leqs e^{-\lexmeas n}
    \quad \text{for all $n \in \naturals$}.
  \]
\end{lemma}

Using higher-order mixing, we obtain the following estimate. It is
similar to an application of \cite[Lemma~3.1]{kirsebom-2023-shrinking},
but we use a weaker mixing condition. That is, we use
Condition~\ref{cond:multiple-doc} restricted to three observables
(i.e., for $m+m' = 3$) instead of $3$-fold mixing, which results in
some polynomial terms that do not appear in
\cite[Lemma~3.1]{kirsebom-2023-shrinking}.

\begin{lemma}
\label{lem:r-explicit:correlations}
  Suppose $(X,T,\mu)$ is an m.p.s.\ and assume that
  \begin{enumerate}[label=(\roman*)]
    \item
      Condition~\ref{cond:multiple-doc} holds when restricted to
      three observables;
    \item \label{item:r-explicit:correlations:2}
      $d\mu = h \diff\leb$ with density $h \in \Ell{q}(\leb)$ such
      that $h \geq c > 0$ for some $q > 1$ and $c > 0$;
    \item \label{item:r-explicit:correlations:3}
      $r_k \to 0$.
  \end{enumerate}
  Then there exists $\lexcorsum > 2$ and $\lexcorcoeff, \lexcor > 0$
  such that, for all $k < j \leq n$,
  \begin{multline*}
    \Bigl| \mu(\exE_k \cap \exE_j) - \int_{X} \mu(B(x,r_k))
      \mu(B(x,r_j)) \diff\mu(x) \Bigr| \\
    \leqs n^{-\lexcorsum}
      + n^{\lexcorcoeff} \bigl( e^{- \lexcor k} + e^{- \lexcor (j-k)}
      \bigr).
  \end{multline*}
\end{lemma}
The existence of $\lexcorsum > 2$ is important for estimates involving double
sums of correlations since $n^{2-\lexcorsum} \to 0$ as $n \to \infty$.

\begin{remark}
Note that the conclusion is not strictly a correlation estimate, as
\[
  \int \mu(B(x,r_k)) \mu(B(x,r_k)) \diff\mu(x)
  \not\approx
  \int \mu(B(x,r_k)) \diff\mu(x) \int \mu(B(x,r_k)) \diff\mu(x)
\]
in general. In fact, this is a large reason for why the sets $\imE_k$
with rescaled radii are more tractable since they \emph{do} satisfy a
correlation estimate (see Lemma~\ref{lem:r-implicit:correlations}
below).
\end{remark}

\begin{remark}
  It is possible to relax the absolute continuity of $\mu$ in
  Lemmas~\ref{lem:r-explicit:meas} and
  \ref{lem:r-explicit:correlations}
  to Conditions~\ref{cond:frostman} and \ref{cond:thin-annuli} instead.
  Replacing the radius condition
  \[
    r_k \to 0
  \]
  with an average measure condition
  \[
    \lim_{k \to \infty} \int_X \mu(B(x,r_k)) \diff\mu(x) = 0,
  \]
  we obtain the same conclusion.
  We do not do so in the present paper since we always assume that
  $\mu$ is absolutely continuous in Section~\ref{sec:clt:alt}.
\end{remark}

We now proceed to the variance estimates. Recall that in
Section~\ref{sec:clt:alt}, the sum we consider is
\[
  \sum_{k=1}^{n} \bigl(\charfun_{B(x,r_k)}(T^{k}x)
  - \mu(B(x,r_k))\bigr).
\]
Recall that
\begin{align*}
  \exsig_n^2 &\coleq \var\biggl( x \mapsto \sum_{k=1}^{n}
    \charfun_{B(x,r_k)}(T^{k}x) - \mu(B(x,r_k)) \biggr), \\
  \exs_n^2(y) &\coleq \var\biggl( x \mapsto \sum_{k=1}^{n}
    \charfun_{B(y,r_k)} (T^{k}x) - \mu(B(y,r_k)) \biggr).
\end{align*}
For convenience, we also let
\[
  \exesum_n = \sum_{k=1}^{n} \mu(\exE_k).
\]
The first result is a lower growth bound for $\exsig_n^2$ in terms of
$\exesum_n$.

\begin{lemma}
\label{lem:r-explicit:liminf}
  Suppose $(X,T,\mu)$ is an m.p.s.\ and assume that
  \begin{enumerate}[label=(\roman*)]
    \item
      Condition~\ref{cond:multiple-doc} holds when restricted to three
      observables;
    \item \label{item:r-explicit:liminf:2}
      $d\mu = h \diff\leb$ with density $h \in \Ell{q}(\leb)$ such
      that $h \geq c > 0$ for some $q > 1$ and $c > 0$;
    \item \label{item:r-explicit:liminf:3}
      $(r_n)$ satisfies Condition~\ref{cond:sequence} for some
      $\seqlow \in (0,1/\dim X)$ and $\seqhigh > p/\dim X$,
      where $p$ is the H{\"o}lder conjugate of $q$.
  \end{enumerate}
  Then
  \[
    \liminf_{n \to \infty} \frac{\exsig_n^2}{\exesum_n}
    \geq 1.
  \]
  In particular, $\exsig_n^2 \geqs n^{1-\seqlow \dim X}$.
\end{lemma}

Under the additional assumption of Condition~\ref{cond:sr:explicit} and
increased regularity of the density, we obtain $\Ell{1}$-convergence of
the quantity $\exs_n^2/\exsig_n^2$.

\begin{proposition}
\label{prop:r-explicit:L1}
  Suppose $(X,T,\mu)$ is an m.p.s.\ and assume that
  \begin{enumerate}[label=(\roman*)]
    \item
      Condition~\ref{cond:multiple-doc} holds when restricted to three
      observables;
    \item \label{item:r-explicit:L1:2}
      $d\mu = h \diff\leb$ with density $h \in \Ell{q}(\leb)$ such
      that $h \geq c > 0$ for some $q > 2$ and $c > 0$;
    \item
      Condition~\ref{cond:sr:explicit} holds for some $\srelog > 1$ and
      $\srein, \sreout > 0$;
    \item \label{item:r-explicit:L1:4}
      $(r_n)$ satisfies Condition~\ref{cond:sequence} for some
      $\seqlow \in (0,1/\dim X)$ and
      $\seqhigh > p /(\sremin \dim X)$, where
      $\sremin = \min \{\srein, \sreout\}$ and $p$ is the H{\"o}lder
      conjugate of $q$.
  \end{enumerate}
  Then
  \[
    \lim_{n \to \infty} \int_X \biggl| \frac{\exs_n(y)}{\exsig_n}
    - \frac{\sqrt{h(y)}}{\sqrt{\mu(h)}} \biggr| \diff\mu(y) = 0.
  \]
\end{proposition}

We verify \ref{cond:sr:explicit} in
\text{\S}\ref{sec:srt:proof:explicit} for selected systems.

\subsection{Variance estimates for Section~\ref{sec:clt:clt}}
\label{sec:var:implicit}
We proceed with the estimates for Section~\ref{sec:clt:clt}, which
mirror the previous estimates in \text{\S}\ref{sec:var:explicit}.
Throughout this subsection, we weaken absolute continuity of the
measure to Condition~\ref{cond:frostman} and \ref{cond:thin-annuli}.

The following lemma is established by Kirsebom, Kunde and
Persson~\cite[Lemma~4.1]{kirsebom-2023-shrinking}
for systems satisfying decay of correlations for $\bv$ vs.\ $\Ell{1}$.
Note that \cite[Lemma~4.1]{kirsebom-2023-shrinking} is slightly
stronger in their specific setting.
In \cite[Proposition~4.1]{ars-2025-recurrence}, the author establishes
the lemma for systems satisfying decay of correlations for H{\"o}lder
continuous observables (that is, Condition~\ref{cond:doc}).

\begin{lemma}[{\cite[Lemma~4.1]{kirsebom-2023-shrinking},%
\cite[Proposition~4.1]{ars-2025-recurrence}}]
\label{lem:r-implicit:meas}
  Suppose $(X,T,\mu)$ is an m.p.s.\ and assume that
  \begin{enumerate}[label=(\roman*)]
    \item
      Condition~\ref{cond:doc} holds;
    \item
      $\mu$ satisfies
      Conditions~\ref{cond:frostman} and \ref{cond:thin-annuli};
    \item
      $\imM_n \to 0$.
  \end{enumerate}
  Then there exists a constant $\limmeas > 0$ such that
  \[
    | \mu(\imE_n) - \imM_n | \leqs e^{-\limmeas n}
    \quad \text{for all $n \in \naturals$}.
  \]
\end{lemma}

Using higher-order mixing, one can obtain correlation estimates.
The following lemma is similar to
\cite[Lemma~4.2]{kirsebom-2023-shrinking},
\cite[Lemma~3]{persson-2026-strong} and
\cite[Proposition~4.2]{ars-2025-recurrence}.
Compared to those works, we relax the $3$-fold decorrelation conditions
and instead use Condition~\ref{cond:multiple-doc} restricted to three
observables.
As a result, our estimates feature polynomially terms that do not
appear in
\cite{kirsebom-2023-shrinking,persson-2026-strong,ars-2025-recurrence}.

\begin{lemma}
\label{lem:r-implicit:correlations}
  Suppose $(X,T,\mu)$ is an m.p.s.\ and assume that
  \begin{enumerate}[label=(\roman*)]
    \item
      Condition~\ref{cond:multiple-doc} holds when restricted to three
      observables;
    \item
      $\mu$ satisfies
      Conditions~\ref{cond:frostman} and \ref{cond:thin-annuli};
    \item \label{item:r-implicit:correlations:3}
      $\imM_n \to 0$.
  \end{enumerate}
  then there exists $\limcorsum > 2$ and $\limcorcoeff, \limcor > 0$
  such that, for all $k < j \leq n$,
  \begin{equation*}
    | \mu(\imE_k \cap \imE_j) - \mu(\imE_k) \mu(\imE_j) |
    \leqs n^{-\limcorsum} + n^{\limcorcoeff}
      \bigl( e^{- \limcor k} + e^{- \limcor (j-k)} \bigr).
  \end{equation*}
\end{lemma}

Compared to Lemma~\ref{lem:r-explicit:correlations}, we see that
$\imE_k \cap \imE_j$ decorrelates when $k$ and $j-k$ are large.
As with Lemma~\ref{lem:r-explicit:correlations}, the existence of
$\limcorsum > 2$ is important for estimating double sums.

We now proceed to the variance estimates. Recall that in
Section~\ref{sec:clt:clt}, the sum we consider is
\[
  \sum_{k=1}^{n} \bigl(\charfun_{B(x,\imr_k(x))}(T^{k}x)
  - \mu(B(x,\imr_k(x)))\bigr).
\]
Recall that
\[
  \imsig_n^2 = \var\biggl( \sum_{k=1}^{n} \charfun_{\imE_k} \biggr)
  \quad \text{and} \quad
  \ims_n^2(y) = \var\biggl( \sum_{k=1}^{n} \charfun_{B(y,\imr_k(y))}
    \circ T^{k} \biggr).
\]
For convenience, we also let
\[
  \imesum_n = \sum_{k=1}^{n} \mu(\imE_k).
  \quad \text{and} \quad
  \immsum_n = \sum_{k=1}^{n} \mu(\imM_k).
\]
The first result is a lower growth bound for $\imsig_n^2$ in terms of
$\imesum_n$.

\begin{lemma}
\label{lem:r-implicit:liminf:sigma}
  Suppose $(X,T,\mu)$ is an m.p.s.\ and assume that
  \begin{enumerate}[label=(\roman*)]
    \item
      Condition~\ref{cond:multiple-doc} holds when restricted to three
      observables;
    \item
      $\mu$ satisfies
      Conditions~\ref{cond:frostman} and \ref{cond:thin-annuli};
    \item \label{item:r-implicit:liminf:sigma:3}
      $(\imM_n)$ satisfies Condition~\ref{cond:sequence} for some
      $\seqlow \in (0,1)$ and $\seqhigh > 1$.
  \end{enumerate}
  Then
  \[
    \liminf_{n \to \infty} \frac{\imsig_n^2}{\imesum_n} \geq 1.
  \]
  In particular, $\imsig_n^2 \geqs n^{1-\seqlow}$.
\end{lemma}

A similar bound is satisfied for $\ims_n^2$ and $\immsum_n$. Note that
we only use Condition~\ref{cond:doc} and not \ref{cond:multiple-doc} in
the following lemma.

\begin{lemma}
\label{lem:r-implicit:liminf:s}
  Suppose $(X,T,\mu)$ is an m.p.s.\ and assume that
  \begin{enumerate}[label=(\roman*)]
    \item
      Condition~\ref{cond:doc} holds;
    \item
      $\mu$ satisfies
      Conditions~\ref{cond:frostman} and \ref{cond:thin-annuli};
    \item
      $(\imM_n)$ satisfies Condition~\ref{cond:sequence} for some
      $\seqlow \in (0,1)$ and $\seqhigh > 1$.
  \end{enumerate}
  Then
  \[
    \liminf_{n \to \infty} \frac{\ims_n^2(y)}{\immsum_n} \geq 1
    \quad \text{uniformly in } y \in \supp \mu.
  \]
  In particular, $\ims_n^2(y) \geqs n^{1-\seqlow}$ uniformly in
  $y \in \supp\mu$.
\end{lemma}

We also obtain convergence of the average of $\ims_n^2/\imsig_n^2$
without any further assumptions.

\begin{proposition}
\label{prop:r-implicit:mean}
  Suppose $(X,T,\mu)$ is an m.p.s.\ and assume that
  \begin{enumerate}[label=(\roman*)]
    \item
      Condition~\ref{cond:multiple-doc} holds when restricted to three
      observables;
    \item
      $\mu$ satisfies
      Conditions~\ref{cond:frostman} and \ref{cond:thin-annuli};
    \item
      $(\imM_n)$ satisfies Condition~\ref{cond:sequence} for some
      $\seqlow \in (0,1)$ and $\seqhigh > 1$.
  \end{enumerate}
  Then
  \[
    \lim_{n \to \infty} \int_X \frac{\ims_n^2(y)}{\imsig_n^2}
    \diff\mu(y) = 1.
  \]
\end{proposition}

Finally, under the additional assumption of
Condition~\ref{cond:sr:implicit}, we obtain convergence in measure of
$\ims_n^2/\imsig_n^2$.

\begin{proposition}
\label{prop:r-implicit:meas}
  Suppose $(X,T,\mu)$ is an m.p.s.\ and assume that
  \begin{enumerate}[label=(\roman*)]
    \item
      Condition~\ref{cond:multiple-doc} holds when restricted to three
      observables;
    \item
      $\mu$ satisfies
      Conditions~\ref{cond:frostman} and \ref{cond:thin-annuli};
    \item
      Condition~\ref{cond:sr:implicit} holds with
      $\sriin,\sriout > 0$;
    \item \label{item:r-implicit:meas:4}
      $(\imM_n)$ satisfies Condition~\ref{cond:sequence} for some
      $\seqlow \in (0,1)$ and $\seqhigh > 2/\srimin$, where
      $\srimin = \min \{\sriin, \sriout\}$.
  \end{enumerate}
  Then
  \[
    \lim_{n \to \infty} \mu \biggl\{y \in X : \Bigl|
    \frac{\ims_n^2(y)}{\imsig_n^2} - 1  \Bigr| > \varepsilon \biggr\}
    = 0 \quad \text{for all } \varepsilon > 0
  \]
\end{proposition}

For selected systems, we verify Condition~\ref{cond:sr:implicit} in
\text{\S}\ref{sec:srt:proof:implicit} with no further restrictions on
$(\imM_n)$.

\section{H{\"o}lder approximations and correlations lemmas}
\label{sec:supporting}

We state a lemma regarding a H{\"o}lder continuous
approximation of indicator functions of balls as it will be used in
many of the proofs. The proof is similar to that of the approximation
argument in \cite[Lemma~3.1]{kirsebom-2023-shrinking}.
Recall the constants $\frost, \thinan$ and $\thinanr$ appearing in
Conditions~\ref{cond:frostman} and \ref{cond:thin-annuli}.

\begin{lemma}
\label{lem:construction}
  Assume that $\mu$ satisfies Conditions~\ref{cond:frostman} and
  \ref{cond:thin-annuli}, and fix $\alpha \in (0,1]$ and
  $\varepsilon > 0$.
  Suppose that $\abr \colon X \to [0,\infty)$ is Lipschitz continuous
  such that $\lipconst{\abr} \leq 1$ and
  $\abr \leq \thinanr$ on $\supp\mu$.
  Then there exists an $\alpha$\nobreakdash-H{\"o}lder continuous
  function $g \colon X \times X \to [0,1]$ such that
  $\holnorm{g}{\alpha} \leqs \varepsilon^{-1}$,
  \begin{equation}
  \label{eq:construction:upper}
    \charfun_{B(x,\abr(x))}(y) \leq g(x,y)
    \leq \charfun_{B(x,\abr(x)+\varepsilon)}(y)
    \quad \forall x,y \in X
  \end{equation}
  and
  \begin{equation}
  \label{eq:construction:measure}
    \int_{X} \bigl| g(x,y) - \charfun_{B(x,\abr(x))}(y) \bigr|
      \diff\mu(y)
    \leqs \varepsilon^{\frostan}
    \quad \forall x \in \supp\mu,
  \end{equation}
  where $\frostan = \min \{\thinan, \frost\}$ and the implicit constants
  do not depend on $g$ or $\abr$.
  Alternatively, one may replace \eqref{eq:construction:upper} with
  \begin{equation*}
    \charfun_{B(x,\abr(x)-\varepsilon)}(y) \leq g(x,y)
    \leq \charfun_{B(x,\abr(x))}(y)
    \quad \forall x,y \in X.
  \end{equation*}
\end{lemma}

\begin{proof}
  Let $Z = \{(x,y) \in X \times X : d(x,y) < \abr(x) + \varepsilon\}$
  and notice that $Z$ is open by the Lipschitz continuity of $\abr$.
  Define
  \[
    g(x,y) =
    \begin{cases}
      0 & (x,y) \notin Z, \\
      \min \bigl\{1, \varepsilon^{-1} \dist\bigl( (x,y),
      (X \times X) \setminus Z \bigr)\bigr\} & (x,y) \in Z.
    \end{cases}
  \]
  Then $g$ is a bounded Lipschitz continuous function on $X \times X$
  (and therefore $\alpha$\nobreakdash-H{\"o}lder continuous) with
  $\holnorm{g}{\alpha} \leq 2\varepsilon^{-1}$. Furthermore, it is
  clear that \eqref{eq:construction:upper} holds.

  For \eqref{eq:construction:measure}, notice that
  $|g(x,y) - \charfun_{B(x,\chi(x))}(y)|
  \leq \charfun_{A_\varepsilon}$,
  where $A_\varepsilon(x)$ is the annulus
  $\{y : \chi(x) \leq d(x,y) < \chi(x) + \varepsilon\}$.
  Let $x \in \supp\mu$.
  If $\varepsilon \leq \chi(x)$, then
  $\mu(A_\varepsilon(x)) \leqs \varepsilon^{\thinan}$
  by Condition~\ref{cond:thin-annuli}.
  If $\varepsilon > \chi(x)$, then
  \[
    \mu(A_\varepsilon(x))
    \leq \mu\bigl(B(x,\chi(x) + \varepsilon)\bigr)
    \leqs (\chi(x) + \varepsilon)^{\frost}
    \leq 2^{\frost}\varepsilon^{\frost}
  \]
  by Condition~\ref{cond:frostman}.
  The implicit constants in the last two estimates does not depend on
  $g$, $\chi$, or $x$, and we have therefore shown
  \eqref{eq:construction:measure}.
\end{proof}

In order to apply Lemma~\ref{lem:construction} to $\abr = r_k$ or
$\abr = \imr_k$, we must have $r_k, \imr_k \leq \thinanr$.
However, in Sections~\ref{sec:clt}--\ref{sec:variance}, we only assume
$r_k \to 0$ or $\imr_k \to 0$.
The following lemma ensures that, for the purposes of the present
paper, we may always without loss of generality assume
$r_k, \imr_k \leq \thinanr$ for all $k$.

\begin{lemma}
\label{lem:wlog}
  Let $\abr_k, \abr_k' \colon \supp\mu \to [0,\infty)$ be sequences of
  functions, and let
  \begin{align*}
    S_n(x) &= \sum_{k=1}^{n} \charfun_{B(x,\abr_k(x))}(T^{k}x)
    - \mu(B(x,\abr_k(x)), \\
    S_n'(x) &= \sum_{k=1}^{n} \charfun_{B(x,\abr_k'(x))}(T^{k}x)
    - \mu(B(x,\abr_k'(x)).
  \end{align*}
  Assume that there exists $N > 0$ such that $\abr_k = \abr_k'$ for all
  $k \geq N$. Then
  \[
    S_n \sim S_n'
    \quad \text{uniformly on } \supp\mu.
  \]
  Moreover, if either
  $\var(S_n) \to \infty$ or $\var(S_n') \to \infty$, then
  \[
    \var(S_n) \sim \var(S_n')
  \]
  and
  \[
    \lim_{n \to \infty} \biglnorm{\frac{1}{\sqrt{\var(S_n)}} S_n
    - \frac{1}{\sqrt{\var(S_n')}} S_n'}{1}
    = 0.
  \]
\end{lemma}

In our setting, we will apply the lemma as follows.
Let $(\abr_k)$ denote either $(r_k)$ or $(\imr_k)$.
If $\abr_k \to 0$ uniformly, then there exists $N > 0$ such that
$\abr_k \leq \thinanr$ for all $k \geq N$.
Define $\abr_k' = \thinanr$ for
$k < N$ and $\abr_k' = \abr_k$ for $k \geq N$.
Lemma~\ref{lem:wlog} allows us to consider the new sequence $(\abr_k')$
instead of $(\abr_k)$ while preserving the relevant asymptotic
behaviours of the sums and variances.
The benefit of considering $(\abr_k')$ is that
Lemma~\ref{lem:construction} can be applied.

\begin{proof}[Proof of Lemma~\ref{lem:wlog}]
  It follows directly from $\abr_k = \abr_k'$ for all $k \geq N$, that
  \[
    | S_n - S_n' | \leq 2N,
  \]
  which establishes the first part.
  This in turn implies
  \[
    \lvert S_n^2 - (S_n')^2 \rvert
    \leqs \min \{\lvert S_n \rvert, \lvert S_n' \rvert\} + 1
  \]
  and
  \[
    |\var(S_n) - \var(S_n')| \leqs \xi_n + 1,
  \]
  where $\xi_n^2 = \min \{ \var(S_n), \var(S_n')\}$,
  which imply the second part.
\end{proof}

Finally, we state the following lemma, which will be employed several
times throughout the proofs. It uses Lemma~\ref{lem:construction} and
is proven in Section~\ref{sec:proof:supporting}.

\begin{lemma}
\label{lem:correlation}
  Assume that $(X,T,\mu)$ satisfies Condition~\ref{cond:doc}, and fix
  $p > 0$ and $\alpha \in (0,1]$. Then the following holds.
  \begin{enumerate}[label=(\roman*)]
    \item \label{item:lem:correlation:1}
      There exist constants $C_1,C_2 > 0$ such that, for any
      $\alpha$\nobreakdash-H{\"o}lder continuous function
      $g \colon X \times X \to [0,1]$ with
      $\holnorm{g}{\alpha} \leq C_1 n^{p}$,
      \begin{multline*}
        \Bigl| \int_X g(x,T^{k}x) \diff\mu(x) - \iint
          g(x,y) \diff\mu(x)\diff\mu(y) \Bigr| \\
        \leq C_2\bigl( n^{-p} + n^{4(\dim X+3)p/\alpha} e^{-\doc k}
          \bigr)
        \quad \text{for all $0 \leq k \leq n$}.
      \end{multline*}
  \end{enumerate}
  Moreover, if we assume Condition~\ref{cond:multiple-doc}
  restricted to the case of $3$ observables, then we have the
  following.
  \begin{enumerate}[resume*]
    \item \label{item:lem:correlation:2}
      For any $\alpha$-H{\"o}lder continuous functions
      $g,h \colon X \times X \to [0,1]$ with
      $\holnorm{g}{\alpha}, \holnorm{h}{\alpha} \leq C_1 n^{p}$,
      \begin{multline*}
        \Bigl| \int_X g(x,T^{k}x) h(x,T^{j}x) \diff\mu(x)
          - \iint g(y,T^{k}x) h(y,T^{j}x)
          \diff\mu(x)\diff\mu(y) \Bigr| \\
        \leq C_2\bigl( n^{-p} + n^{4(\dim X+3)p/\alpha} e^{-\mdoc k}
          \bigr)
        \quad \text{for all $k \leq n$}.
      \end{multline*}
  \end{enumerate}
\end{lemma}

\section{Proof of distributional laws for recurrence}
\label{sec:proof:clt}
We devote this section to the proof of Theorems~\ref{thm:alt} and
\ref{thm:clt}.
Recall that the hypotheses of the theorems include
Conditions~\ref{cond:multiple-doc}, \ref{cond:frostman} and
\ref{cond:thin-annuli}. In Theorem~\ref{thm:alt}, the measure is
assumed to be absolutely continuous and the assumptions on the density
imply Conditions~\ref{cond:frostman} and \ref{cond:thin-annuli}.
The proofs both use characteristic functions through L\'evy's
continuity theorem. In particular, if $(f_n)$ and $(g_n)$ are sequences
of observables such that $g_n \to g$ in distribution and
$| \psi_{f_n}(t) - \psi_{g_n}(t) | \to 0$ for all $t \in \reals$,
then $f_n \to g$ in distribution.
Another argument that we repeatedly use is the following.
Let $f, g$ be two observables. Then, for all $t \in \reals$,
\begin{equation}
\label{eq:charfun:L1}
  | \psi_f(t) - \psi_g(t) |
  \leq \int_X \bigl| e^{itf(x)} - e^{itg(x)} \bigr| \diff\mu(x)
  \leq |t| \cdot \lnorm{f-g}{1}.
\end{equation}
We begin with a lemma for averaging characteristic functions.

\subsection{A lemma for averaging characteristic functions}
A key step in proving Theorems~\ref{thm:alt} and \ref{thm:clt} is to
write the characteristic functions of the recurrence sums as an average
over characteristic functions of the corresponding hitting sums.
We therefore establish the following lemma for a general sequence of
radius functions $(\abr_n)$, which we will later apply to scalars
$\abr_n = r_n \in \reals$, or implicitly defined radii
$\abr_n = \imr_n$, given by $\mu(B(x,\imr_n(x))) = M_n$.

\begin{lemma}
\label{lem:clt:general-rk}
  Suppose that $(X,T,\mu)$ satisfies
  Conditions~\ref{cond:multiple-doc}, \ref{cond:frostman}
  and \ref{cond:thin-annuli}.
  Suppose that $(\abr_n)$ is a sequence of $\mu$-a.e.\ Lipschitz
  continuous functions $\abr_n \colon X \to [0,\infty)$ such that
  $\lipconst{\abr_n} \leq 1$ and
  $\abr_n \to 0$ uniformly on $\supp \mu$.
  Let $(a_n)$ be a sequence in $(0,\infty)$ such that $a_n \leqs n$.
  Then, for all $t \in \reals$ and all $n \in \naturals$,
  \begin{multline*}
    \biggl| \int_X \exp \biggl( \frac{it}{a_n} \sum_{k=1}^{n}
    \charfun_{B(x,\abr_k(x))}(T^{k}x) - \mu\bigl( B(x,\abr_k(x)) \bigr)
    \biggr) \diff\mu(x) \\
    - \iint \exp \biggl( \frac{it}{a_n} \sum_{k=1}^{n}
    \charfun_{B(y,\abr_k(y))}(T^{k}x) - \mu\bigl( B(y,\abr_k(y)) \bigr)
    \biggr) \diff\mu(x) \diff\mu(y) \biggr| \\
    \leqs_t \frac{\log n}{a_n}.
  \end{multline*}
\end{lemma}

In the proof of Lemma~\ref{lem:clt:general-rk}, we utilise
Condition~\ref{cond:multiple-doc},
but we must make appropriate approximations of the functions
$x \mapsto \charfun_{B(x,\abr_k(x))}(T^{k}x)$.
We localise to remove the dependence on $x$ of the targets
$B(x,\abr_k(x))$, and then we approximate by H{\"o}lder continuous
observables.
The approximation by H{\"o}lder continuous observables is done by using
employing Lemma~\ref{lem:construction}, which relies on
Conditions~\ref{cond:frostman} and \ref{cond:thin-annuli}.
For the localisation, we would like to obtain a partition of $X$ for
which the indicator function of each partition element can be
well-approximated by H{\"o}lder continuous observables. This turns out
to be difficult in general; however, by removing a subset of $X$ with
small measure, Conze and Le~Borgne obtain a partition of the remaining
space with the desired properties. This lemma fixes a slight inaccuracy
in the proof of \cite[Theorem~3.3]{conze-2011-limit}.
The statement and proof were communicated by private correspondence,
and are available in \cite[Lemma~4]{persson-2026-strong}.

\begin{lemma}[Conze--Le~Borgne]
\label{lem:partition}
  Suppose that $X$ is a compact smooth manifold and let $\kappa_0 > 0$.
  Then there exists $C_0 > 0$ such that, for all $n \in \naturals$,
  there exist a subset $Y_n \subset X$ and a partition $\partition_n$
  of $X \setminus Y_n$ consisting entirely of sets with positive
  measure such that
  \begin{align*}
    \# \partition_n &\leq n^{\kappa_0 \dim X}, \\
    |\partition_n| &\leq  C_0n^{-\kappa_0}, \\
    \mu(Y_n) &\leq n^{-\kappa_0}.
  \end{align*}
  Moreover, there exist density functions
  $\rho_{Q,n} \colon X \to [0,\infty)$ such that,
  for all $Q \in \partition_n$,
  \begin{align*}
      \supnorm{\rho_{Q,n}} &\leq C_0 n^{(\dim X + 1)\kappa_0}, \\
      \lipconst{\rho_{Q,n}} &\leq C_0 n^{2(\dim X + 2)\kappa_0}, \\
      \lnorm{\rho_{Q,n} - \mu(Q)^{-1} \charfun_Q}{1}
      &\leq C_0 n^{-\kappa_0}.
  \end{align*}
\end{lemma}

We call $\partition_n$ a \emph{quasi-partition} of $X$ if it
satisfies the conclusion of Lemma~\ref{lem:partition}.
We can now prove Lemma~\ref{lem:clt:general-rk}.
The proof proceeds as follows. We approximate the indicator functions
$\charfun_{B(x,\abr_k(x))}(T^{k}x)$ by H{\"o}lder continuous functions.
We then localise the H{\"o}lder continuous functions using
Lemma~\ref{lem:partition}, and obtain the conclusion of the lemma for
the localisation.

\begin{proof}[Proof of Lemma~\ref{lem:clt:general-rk}]
  We first recall and set up notation.
  Recall the constants $\frost, \thinanr, \thinan, \mdoc > 0$ and
  polynomial $P_r$ from Conditions~\ref{cond:frostman},
  \ref{cond:thin-annuli} and \ref{cond:multiple-doc}.
  Write $\frostan = \min \{\frost, \thinan\}$ and $d_0 = \dim X$.
  By Lemma~\ref{lem:wlog}, we may without loss of generality assume
  that $\abr_n \leq \thinanr$ on $\supp\mu$ for all $n \in \naturals$.
  Denote the balls $B(x,\abr_k(x))$ by $B_k(x)$. For
  $\varepsilon \in \reals$, let
  $B_k^{\varepsilon}(x) = B(x,\abr_k(x) + \varepsilon)$.
  We will use the following notation:
  \begin{align*}
    \psi_n(t) &\text{ char.\ func.\ of }
    x \mapsto \frac{1}{a_n} \sum_{k=1}^{n} \charfun_{B_k(x)}(T^{k}x)
      - \mu(B_k(x)), \\
    \psi_n(t,y) &\text{ char.\ func.\ of }
    x \mapsto \frac{1}{a_n} \sum_{k=1}^{n} \charfun_{B_k(y)}(T^{k}x)
      - \mu(B_k(y)),
  \end{align*}
  where `char.\ func.'\ stands for `characteristic function'.
  The conclusion of the lemma can now be formulated as
  \begin{equation}
  \label{eq:clt:lem:goal}
    \Bigl| \psi_n(t) - \int_X \psi_n(t,y) \diff\mu(y) \Bigr|
    \leqs_t \frac{\log n}{a_n}.
  \end{equation}

  We now introduce the quasi-partition.
  Let $\kappa > 0$ be a constant to be specified later.
  Apply Lemma~\ref{lem:partition} with $\kappa_0 = 2\kappa$ to obtain
  the constant $C_0 > 0$, the set $Y_n$, the partition $\partition_n$ of
  $X \setminus Y_n$, the densities
  $\{\rho_{Q,n}\}_{Q \in \partition_n}$ and the corresponding
  estimates. For each $Q \in \partition_n$, we may select a point
  $y_Q \in Q \cap \supp\mu$ since $Q$ has positive measure.

  Let $\varepsilon_n = C_0 n^{-\kappa}$ and apply
  Lemma~\ref{lem:construction} with $\varepsilon = \varepsilon_n$.
  We obtain Lipschitz continuous functions
  $g_{n,k}, h_{n,k} \colon X \times X \to [0,1]$ such that we have
  the regularity properties
  \begin{equation}
  \label{eq:clt:lemma:lip}
    \lipnorm{g_{n,k}} \leqs n^{\kappa}
    \quad \text{and} \quad
    \lipnorm{h_{n,k}} \leqs n^{\kappa},
  \end{equation}
  the inequalities
  \begin{align}
  \label{eq:clt:lemma:g-approx}
    \charfun_{B_k(x)}(y)
    &\leq g_{n,k}(x,y) \leq \charfun_{B_k^{\varepsilon_n}(x)}(y)
    &&\quad (x,y \in X) \\
  \nonumber
    \charfun_{B_k^{-\varepsilon_n}(x)}(y)
    &\leq h_{n,k}(x,y) \leq \charfun_{B_k(x)}(y)
    &&\quad (x,y \in X)
  \end{align}
  and the estimates
  \begin{align}
    \label{eq:clt:lemma:g-measure}
    \Bigl| \int_{X} g_{n,k}(x,y) \diff\mu(y) - \mu(B_k(x)) \Bigr|
    &\leqs n^{-\frostan\kappa},
    && \text{uniformly in } x \in \supp\mu \\
    \label{eq:clt:lemma:h-measure}
    \biggl| \int_{X} h_{n,k}(x,y) \diff\mu(y) - \mu(B_k(x)) \biggr|
    &\leqs n^{-\frostan\kappa}
    && \text{uniformly in } x \in \supp\mu.
  \end{align}
  Mirroring the notation of $\psi_n(t)$ and $\psi_n(t,y)$, write
  \begin{align*}
    \hat{\psi}_n(t) &\text{ char.\ func.\ of }
      x \mapsto \frac{1}{a_n} \sum_{k=1}^{n} g_{n,k}(x,T^{k}x)
      - \mu\bigl(g_{n,k}(x,\cdot)\bigr), \\
    \hat{\psi}_n(t,y) &\text{ char.\ func.\ of }
      x \mapsto \frac{1}{a_n} \sum_{k=1}^{n} g_{n,k}(y,T^{k}x)
      - \mu\bigl(g_{n,k}(y,\cdot)\bigr).
  \end{align*}

  Let $D > 0$ be a constant to be specified later, and write
  $\beta_n = D \log n$. We shift time by $\beta_n$ and consider
  \[
    \psi_{n,\beta_n}(t) \text{ char.\ func.\ of }
    x \mapsto \frac{1}{a_n} \sum_{k=1+\beta_n}^{n+\beta_n}
    \charfun_{B_k(x)}(T^{k}x) - \mu\bigl( B_k(x) \bigr)
  \]
  and the analogously defined
  $\psi_{n,\beta_n}(t,y), \hat{\psi}_{n,\beta_n}(t)$ and
  $\hat{\psi}_{n,\beta_n}(t,y)$.

  By \eqref{eq:charfun:L1}, we have
  \begin{equation}
  \label{eq:clt:lem:psi:time-shift}
    | \psi_n(t) - \psi_{n,\beta_n}(t) | \leqs_t \frac{\log n}{a_n}.
  \end{equation}
  By \eqref{eq:clt:lemma:g-approx} and \eqref{eq:clt:lemma:g-measure},
  \begin{equation}
  \label{eq:clt:lemma:measure-L1}
    \int_X \biggl| \mu ( B_k(x) )
    - \int_X g_{n,k}(x,y) \diff\mu(y) \biggr| \diff\mu(x)
    \leqs n^{-\frostan\kappa}.
  \end{equation}
  On the other hand, by \eqref{eq:clt:lemma:g-approx},
  \[
    \lnorm{\charfun_{B_k(\cdot)}(T^{k}\cdot)
      - g_{n,k}(\cdot,T^{k}\cdot)}{1}
    \leq \int_X g_{n,k}(x,T^{k}x) - h_{n,k}(x,T^{k}x) \diff\mu(x).
  \]
  By Lemma~\ref{lem:correlation} (with $\alpha = 1$ and $p = \kappa$),
  this implies
  \begin{multline*}
    \biggl| \int_X g_{n,k}(x,T^{k}x) \diff\mu
    - \iint g_{n,k}(x,y) \diff\mu(x)\diff\mu(y) \biggr| \\
    \leqs n^{-\kappa} + n^{4(d_0+3)\kappa} e^{-\mdoc k}
  \end{multline*}
  and the corresponding estimate for $h_{n,k}$. Furthermore, by
  \eqref{eq:clt:lemma:g-measure} and \eqref{eq:clt:lemma:h-measure},
  \[
    \Bigl| \iint g_{n,k} \diff\mu\diff\mu
    - \iint h_{n,k} \diff\mu\diff\mu \Bigr|
    \leqs n^{-\frostan\kappa}
  \]
  Combining these three equations, we have
  \begin{equation}
  \label{eq:clt:lem:correlation}
    \lnorm{\charfun_{B_k(\cdot)}(T^{k}\cdot)
      - g_{n,k}(\cdot,T^{k}\cdot)}{1}
    \leqs n^{-\frostan\kappa} + n^{-\kappa}
      + n^{4(d_0+3)\kappa} e^{-\mdoc k}.
  \end{equation}
  Choosing $\kappa$ such that $n^{-\frostan\kappa} \leqs n^{-1}$ and
  $n^{-\kappa} \leqs n^{-1}$, the estimates
  \eqref{eq:clt:lemma:measure-L1} and \eqref{eq:clt:lem:correlation}
  imply
  \begin{equation}
  \label{eq:clt:lem:psi:1}
  \begin{split}
    | \psi_{n,\beta_n}(t) - \hat{\psi}_{n,\beta_n}(t) |
    &\leqs_t \frac{1}{a_n} \sum_{k=1+\beta_n}^{n+\beta_n}
      \bigl(n^{-1} + n^{4(d_0+3)\kappa} e^{-\mdoc k}\bigr) \\
    &\leqs_t \frac{1}{a_n} \bigl(1 + n^{4(d_0+3)\kappa}
      e^{-\mdoc \beta_n}\bigr) \\
    &\leqs_t \frac{1}{a_n},
  \end{split}
  \end{equation}
  where the last inequality follows from choosing $D$ sufficiently
  large in $\beta_n$.

  By \eqref{eq:clt:lemma:lip}, we have, for all $k \leq n$,
  \[
    \sum_{Q \in \partition_n} \charfun_Q(x) \bigl( g_{n,k}(x,T^{k}x)
      - g_{n,k}(y_Q,T^{k}x) \bigr)
    \leq \sum_{Q \in \partition_n} \charfun_Q(x) n^{\kappa}
      \diam{\partition_n}
    \leqs n^{-\kappa}
  \]
  since $\diam{\partition_n} \leqs n^{-\kappa}$.
  Similarly,
  \[
    \sum_{Q \in \partition_n} \charfun_Q(x)
    \bigl( \mu( g_{n,k}(x,\cdot) ) - \mu ( g_{n,k}(y_Q,\cdot) ) \bigr)
    \leqs n^{-\kappa}.
  \]
  Therefore, using the notation
  \[
    \hat{\psi}_{n,\beta_n}^{\partition_n}
    \text{ char.\ func.\ of }
    x \mapsto a_n^{-1} \sum_{k=1+\beta_n}^{n+\beta_n}
    \sum_{Q \in \partition_n} \charfun_Q(x) \bigl( g(y_Q,T^{k}x)
    - \mu( g(y_Q,\cdot) ) \bigr),
  \]
  we have, by choosing $\kappa$ sufficiently large,
  \begin{equation}
  \label{eq:clt:lem:psi:2}
    | \hat{\psi}_{n,\beta_n}(t)
      - \hat{\psi}_{n,\beta_n}^{\partition_n}(t) |
    \leqs_t \frac{1}{a_n}\sum_{k=1+\beta_n}^{n+\beta_n} (n^{-\kappa}
      + \mu(Y_n) )
    \leqs_t \frac{n^{1-\kappa}}{a_n}
    \leqs_t \frac{1}{a_n}
  \end{equation}
  since $\mu(Y_n) \leqs n^{-2\kappa}$.

  Using that elements of $\partition_n$ are disjoint, we have
  \begin{multline}
  \label{eq:clt:lemma:psi-partition}
    \hat{\psi}_{n,\beta_n}^{\partition_n}(t) \\
    = \sum_{Q \in \partition_n}
    \int_X \charfun_Q(x) \exp\biggl( \frac{it}{a_n}
    \sum_{k=1+\beta_n}^{n+\beta_n} g_{n,k}(y_Q, T^{k}x)
    - \mu( g_{n,k}(y_Q,\cdot) ) \biggr) \diff\mu(x)
  \end{multline}
  Using the densities $\{\rho_{Q,n}\}$ and the fact that
  $\lnorm{\rho_{Q,n} - \mu(Q)^{-1} \charfun_Q}{1} \leqs n^{-2\kappa}$,
  we have, for all $Q \in \partition_n$,
  \begin{multline}
  \label{eq:clt:lemma:psi-partition:rho}
    \biggl| \mu(Q)^{-1} \int_X \charfun_Q(x) \exp\biggl( \frac{it}{a_n}
    \sum_{k=1+\beta_n}^{n+\beta_n} g_{n,k}(y_Q, T^{k}x)
    - \mu( g_{n,k}(y_Q, \cdot) ) \biggr) \diff\mu(x) \\
    - \int_X \rho_{Q,n} \exp\biggl( \frac{it}{a_n}
    \sum_{k=1+\beta_n}^{n+\beta_n} g_{n,k}(y_Q, T^{k}x)
    - \mu( g_{n,k}(y_Q, \cdot) ) \biggr) \diff\mu(x) \biggr| \\
    \leqs n^{-2\kappa}.
  \end{multline}
  Since $\exp(f+g) = \exp(f)\exp(g)$, the exponential in the integrand
  can be written as a product of observables
  $\exp(it a_n^{-1} (g_{n,k}(y_Q, \cdot) - \mu(g_{n,k}(y_Q,\cdot))))$
  composed with iterates of $T$. Hence, we can apply
  Condition~\ref{cond:multiple-doc} with $m=1$, $m'=n$, $N = \beta_n$,
  $k_1 = 0$ and $l_j = j$ ($j=1,\ldots,n)$. That is, we have
  \begin{multline}
  \label{eq:clt:lemma:psi-partition:3}
    \biggl| \int_X \rho_{Q,n} \exp\biggl( \frac{it}{a_n}
      \sum_{k=1+\beta_n}^{n+\beta_n} g_{n,k}(y_Q, T^{k}x)
      - \mu( g_{n,k}(y_Q, \cdot) ) \biggr) \diff\mu(x) \\
    - \mu(\rho_{Q,n})\int_X \exp\biggl( \frac{it}{a_n}
      \sum_{k=1+\beta_n}^{n+\beta_n} g_{n,k}(y_Q, T^{k}x)
      - \mu( g_{n,k}(y_Q, \cdot) ) \biggr) \diff\mu(x) \biggr| \\
    \leqs_t
      (n^{4(d_0+2)\kappa} + n^{1+2(d_0+1)\kappa + \kappa} ) P_r(n)
      e^{-\mdoc \beta_n},
  \end{multline}
  where we have used that
  \[
  \lipnorm{\rho_{Q,n}} \leqs n^{4(d_0+2)\kappa},
  \quad
  \supnorm{\rho_{Q,n}} \leqs n^{2(d_0+1)\kappa},
  \quad
  \holnorm{g_{n,k}}{\alpha} \leqs n^{\kappa}.
  \]
  Since $\mu(\rho_{Q,n}) = 1 + \bigo(n^{-2\kappa})$, the estimates
  \eqref{eq:clt:lemma:psi-partition}--%
  \eqref{eq:clt:lemma:psi-partition:3}
  imply
  \[
    \Bigl| \hat{\psi}_{n,\beta_n}^{\partition_n}(t)
    - \sum_{Q\in\partition_n}\mu(Q)\hat{\psi}_{n,\beta_n}(t,y_Q) \Bigr|
    \leqs_t n^{-2\kappa}
      + n^{4(d_0+2)\kappa} P_r(n) e^{-\mdoc \beta_n}.
  \]
  We can bound the right-hand side by $n^{-1} \leq a_n^{-1}$ by
  choosing $\kappa$ and $D$ in $\beta_n = D \log n$ sufficiently large.
  Therefore,
  \begin{equation}
  \label{eq:clt:lemma:psi:pre-integral}
    \Bigl| \hat{\psi}_{n,\beta_n}^{\partition_n}(t)
    - \sum_{Q\in\partition_n}\mu(Q)\hat{\psi}_{n,\beta_n}(t,y_Q) \Bigr|
    \leqs_t \frac{1}{a_n}.
  \end{equation}

  As before, we may use the Lipschitz continuity of $g_{n,k}$, and
  hence that of $y \mapsto \hat{\psi}_{n,\beta_n}(t,y)$, to obtain,
  for each $Q \in \partition_n$,
  \begin{equation}
  \label{eq:clt:lemma:integral-argument}
    \Bigl| \mu(Q) \hat{\psi}_{n,\beta_n}(t,y_Q)
    - \int_Q \hat{\psi}_{n,\beta_n}(t,y) \diff\mu(y) \Bigr|
    \leqs_t \frac{1}{a_n}.
  \end{equation}
  Using that $Y_n = X \setminus \bigcup_{Q \in \partition_n} Q$ has
  measure $\mu(Y_n) \leq n^{-\kappa}$, we can combine
  \eqref{eq:clt:lemma:psi:pre-integral} and
  \eqref{eq:clt:lemma:integral-argument} to obtain that
  \[
    \Bigl| \hat{\psi}_{n,\beta_n}^{\partition_n}(t)
    - \int_X \hat{\psi}_{n,\beta_n}(t,y) \diff\mu(y) \Bigr|
    \leqs_t \frac{1}{a_n}.
  \]
  Using this estimate together with \eqref{eq:clt:lem:psi:time-shift},
  \eqref{eq:clt:lem:psi:1} and \eqref{eq:clt:lem:psi:2}, we obtain
  \[
    \Bigl| \psi_n(t) - \int_X \psi_{n,\beta_n}(t,y) \diff\mu(y)
    \Bigr| \leqs_t \frac{\log n}{a_n}.
  \]
  A similar argument to the one establishing \eqref{eq:clt:lem:psi:1}
  allows us to shift time back by $\beta_n$ and obtain
  \[
    \Bigl| \psi_n(t) - \int_X \psi_n(t,y) \diff\mu(y)
    \Bigr| \leqs_t \frac{\log n}{a_n},
  \]
  which is exactly \eqref{eq:clt:lem:goal}.
\end{proof}

\subsection{Proof of Theorem~\ref{thm:alt}}
With Lemma~\ref{lem:clt:general-rk} established, we now use it to prove
the ALT for recurrence. Recall that
\[
  \exsig_n^2 = \var\biggl( \sum_{k=1}^{n} \charfun_{\exE_k}
    \biggr)
  \quad \text{and} \quad
  \exs_n^2(y) = \var\biggl( \sum_{k=1}^{n} \charfun_{B(y,r_k)} \circ
    T^{k} \biggr).
\]

\begin{proof}[Proof of Theorem~\ref{thm:alt}]
  For convenience, let
  $\bar{h} = \sqrt{h/\mu(h)}$ and use the following notation:
  \begin{align*}
    \psi_n(t) &\text{ char.\ func.\ of }
    x \mapsto \frac{1}{\exsig_n} \sum_{k=1}^{n}
      \bigl(\charfun_{B(x,r_k)}(T^{k}x) - \mu(B(x,r_k)) \bigr), \\
    \psi_n^{\sigma}(t,y) &\text{ char.\ func.\ of }
    x \mapsto \frac{1}{\exsig_n} \sum_{k=1}^{n}
      \bigl(\charfun_{B(y,r_k)}(T^{k}x) - \mu(B(y,r_k)) \bigr), \\
    \psi_n^{s}(t,y) &\text{ char.\ func.\ of }
    x \mapsto
      \frac{\bar{h}(y)}{\exs_n(y)}
      \sum_{k=1}^{n} \bigl(\charfun_{B(y,r_k)}(T^{k}x) - \mu(B(y,r_k))
      \bigr).
  \end{align*}
  We may apply Lemma~\ref{lem:clt:general-rk} to $\abr_n = r_n$ and
  $a_n = \exsig_n$ by the hypothesis~\ref{item:alt:r} and the fact that
  $\exsig_n^2 \leqs n^2$, trivially. As a result,
  \begin{equation}
  \label{eq:alt:proof:1}
    \Bigl| \psi_n(t) - \int_X \psi_n^{\sigma}(t,y) \diff\mu(y)
    \Bigr| \leqs_t \frac{\log n}{\exsig_n}.
  \end{equation}

  Let
  $S_{n,y} = \sum_{k=1}^{n} \charfun_{B(y,r_k)} \circ T^{k}
  - \mu(B(y,r_k))$.
  Then $\int | S_{n,y} | \diff\mu \leq \exs_n(y)$ by the
  Cauchy--Schwarz inequality. Therefore,
  \begin{equation}
  \label{eq:alt:proof:2}
  \begin{split}
    \int_X \bigl| \psi_n^{\sigma}(t,y) - \psi_n^{s}(t,y) \bigr|
      \diff\mu(y)
    &\leqs_t \int_X \Bigl| \frac{1}{\exsig_n}
      - \frac{\bar{h}(y)}{\exs_n(y)} \Bigr|
      \int_X | S_{n,y}(x) | \diff\mu(x) \diff\mu(y) \\
    &\leq \int_X \Bigl| \frac{1}{\exsig_n}
      - \frac{\bar{h}(y)}{\exs_n(y)} \Bigr| |\exs_n(y)|
      \diff\mu(y) \\
    &= \int_X \Bigl| \frac{\exs_n(y)}{\exsig_n} - \bar{h}(y)
      \Bigr| \diff\mu(y) \\
    &\to 0 \text{ as $n\to \infty$},
  \end{split}
  \end{equation}
  where the last line follows by the hypothesis \ref{item:alt:var}.
  By the hypothesis \ref{item:alt:clt}, the functions 
  $y \mapsto \psi_n^{s}(t,y)$ converge pointwise $\mu$-a.e.\ to
  $y \mapsto \exp( - \bar{h}(y)^2t^2/2)$ as $n \to \infty$.
  Additionally, since the functions $y \mapsto \psi_n^{s}(t,y)$ are
  uniformly bounded by $1$, we have
  \begin{equation}
  \label{eq:alt:proof:3}
    \lim_{n \to \infty}
    \int_X \biggl| \psi_n^{s}(t,y)
    - \exp\biggl( -\frac{\bar{h}^2}{2}t^2 \biggr) \biggr| \diff\mu(y)  
    = 0.
  \end{equation}
  by the Dominated Convergence Theorem.

  By Lemma~\ref{lem:r-explicit:liminf},
  $\exsig_n^2 \geqs n^{1-\seqlow \dim X}$, where
  $\seqlow \in (0,1/\dim X)$, and hence $\exsig_n^{-1} \log n \to 0$.
  Thus, by combining \eqref{eq:alt:proof:1}--\eqref{eq:alt:proof:3}, we
  conclude the proof.
\end{proof}

\subsection{Proof of Theorem~\ref{thm:clt}}
We now prove that the CLT can be recovered if one appropriately
rescales the radii of the balls.
Recall that
\[
  \imsig_n^2 = \var\biggl( \sum_{k=1}^{n} \charfun_{\imE_k} \biggr)
  \quad \text{and} \quad
  \ims_n^2(y) = \var\biggl( \sum_{k=1}^{n} \charfun_{B(y,\imr_n(y))}
  \circ T^{k}\biggr).
\]

We will use the following general result in probability theory.

\begin{proposition}
[see {\cite[Proposition~4.12]{kallenberg-2002-foundations}}]
\label{prop:kallenberg}
  For a fixed $p > 0$, let $f_1, f_2, \ldots \in \Ell{p}(\mu)$ with
  $f_n \tomeas{\mu} f$. Then
  \[
    \lnorm{f_n - f}{p} \to 0
    \quad \text{if and only if} \quad
    \lnorm{f_n}{p} \to \lnorm{f}{p} < \infty.
  \]
\end{proposition}

We now proceed with the proof of Theorem~\ref{thm:clt}.
We will apply Lemma~\ref{lem:clt:general-rk} to the sequence of
implicitly defined radius functions $(\imr_n)$, which are given
by $\mu(B(x,\imr_n(x))) = \imM_n$.

\begin{proof}[Proof of Theorem~\ref{thm:clt}]
  We use the following notation:
  \begin{align*}
    \psi_n(t) &\text{ char.\ func.\ of }
    x \mapsto \frac{1}{\imsig_n} \sum_{k=1}^{n}
      (\charfun_{\imE_k}(x) - \mu(\imE_k)), \\
    \psi_n^{\sigma}(t,y) &\text{ char.\ func.\ of }
    x \mapsto \frac{1}{\imsig_n} \sum_{k=1}^{n}
      (\charfun_{B(y,\imr_k(y))}(T^{k}x) - \imM_k), \\
    \psi_n^{s}(t,y) &\text{ char.\ func.\ of }
    x \mapsto \frac{1}{\ims_n(y)} \sum_{k=1}^{n}
      (\charfun_{B(y,\imr_k(y))}(T^{k}x) - \imM_k),
  \end{align*}
  Recall from the discussion in  \text{\S}\ref{sec:discussion:radius}
  that the radius functions $\imr_n \colon X \to (0,\infty)$ are
  Lipschitz continuous with $\lipconst{\imr_n} = 1$, and $\imr_n \to 0$
  uniformly on $\supp\mu$.
  Furthermore, $\imsig_n^2 \leqs n^2$ trivially.
  We may therefore apply Lemma~\ref{lem:clt:general-rk} with
  $\chi_n = \imr_n$ and $a_n = \imsig_n$.
  As a result,
  \[
    \biggl| \int_X \exp \biggl( \frac{it}{\imsig_n} \sum_{k=1}^{n}
      \charfun_{\imE_k}(x) - \imM_k \biggr) \diff\mu(x) - \int_X
      \psi_n^{\sigma}(t,y) \diff\mu(y) \biggr|
    \leqs_t \frac{\log n}{\imsig_n}.
  \]

  Since $| \mu(\imE_k) - \imM_k | \leqs e^{-\limmeas k}$ by
  Lemma~\ref{lem:r-implicit:meas}, we may use \eqref{eq:charfun:L1} to
  obtain that
  \[
    \biggl| \psi_n(t) - \int_X \exp \biggl( \frac{it}{\imsig_n}
    \sum_{k=1}^{n} \charfun_{\imE_k}(x) - \imM_k \biggr) \diff\mu(x)
    \biggr|
    \leqs_t \frac{\log n}{\imsig_n}.
  \]
  Hence,
  \begin{equation}
  \label{eq:clt-implicit-r:proof:psi:1}
    \Bigl| \psi_n(t) - \int_X \psi_n^{\sigma}(t,y) \diff\mu(y) \Bigr|
    \leqs_t \frac{\log n}{\imsig_n}
  \end{equation}

  We conclude the proof by showing
  \begin{equation}
  \label{eq:clt-implicit-r:proof:psi:2}
    \lim_{n \to \infty} \int_X \bigl| \psi_n^{\sigma}(t,y)
      - \psi_n^{s}(t,y) \bigr| \diff\mu(y)
    = 0 \quad \text{for all $t \in \reals$}
  \end{equation}
  and
  \begin{equation}
  \label{eq:clt-implicit-r:proof:psi:3}
    \lim_{n \to \infty} \int_X \psi_n^{s}(t,y) \diff\mu(y)
    = e^{-t^2/2} \quad \text{for all $t \in \reals$}.
  \end{equation}
  Since $\imsig_n^2 \geqs n^{1-\seqlow}$ and
  $\imsig_n^{-1} \log n \to 0$ by
  Lemma~\ref{lem:r-implicit:liminf:sigma}, the estimates in
  \eqref{eq:clt-implicit-r:proof:psi:1}--%
  \eqref{eq:clt-implicit-r:proof:psi:3} imply the result.

  We first show \eqref{eq:clt-implicit-r:proof:psi:2}. Let
  $S_{n,y} = \sum_{k=1}^{n} \charfun_{B(y,\imr_k(y))} \circ T^{k} -
  \imM_k$.
  Then $\int |S_{n,y}| \diff\mu \leq \ims_n(y)$ by the Cauchy--Schwarz
  inequality. Therefore, by \eqref{eq:charfun:L1},
  \[
  \begin{split}
    \int_X \bigl| \psi_n^{\sigma}(t,y) - \psi_n^{s}(t,y) \bigr|
      \diff\mu(y)
    &\leqs_t \int_X \Bigl| \frac{1}{\imsig_n} - \frac{1}{\ims_n(y)}
      \Bigr| \int_X | S_{n,y}(x) | \diff\mu(x) \diff\mu(y) \\
    &\leq \int_X \Bigl| \frac{1}{\imsig_n} - \frac{1}{\ims_n(y)} \Bigr|
      |\ims_n(y)| \diff\mu(y) \\
    &= \int_X \Bigl| \frac{\ims_n(y)}{\imsig_n} - 1 \Bigr| \diff\mu(y).
  \end{split}
  \]
  The hypothesis~\ref{item:clt:var}, namely that
  $\ims_n^2/\imsig_n^2 \todistr 1$, implies $\ims_n^2/\imsig_n^2 \to 1$
  in measure since the limiting distribution is constant.
  Furthermore, by Proposition~\ref{prop:r-implicit:mean},
  $\lnorm{\ims_n^2/\imsig_n^2}{1} \to 1 < \infty$.
  Thus, by Proposition~\ref{prop:kallenberg},
  $\ims_n^2/\imsig_n^2 \to 1$ in $\Ell{1}$. Now,
  \[
    \Bigl| \frac{\ims_n}{\imsig_n} - 1 \Bigr|
    = \frac{ | \ims_n^2/\imsig_n^2 - 1 | }{|\ims_n/\imsig_n + 1| }
    \leq 
    \Bigl| \frac{\ims_n^2}{\imsig_n^2} - 1 \Bigr|
  \]
  since the denominator is greater than $1$. We therefore conclude that
  \[
    \lim_{n \to \infty} \int_X \Bigl| \frac{\ims_n(y)}{\imsig_n} - 1
    \Bigr| \diff\mu(y) 
    \leq \lim_{n \to \infty} \int_X \Bigl|
      \frac{\ims_n^2(y)}{\imsig_n^2} - 1 \Bigr| \diff\mu(y) 
    = 0.
  \]
  We have therefore shown \eqref{eq:clt-implicit-r:proof:psi:2}.

  To show \eqref{eq:clt-implicit-r:proof:psi:3}, notice that, for
  fixed $t \in \reals$, the function $y \mapsto \psi_n^{s}(t,y)$
  is uniformly bounded by $1$. It converges to $\exp(-t^2/2)$ as
  $n \to \infty$ by hypothesis~\ref{item:clt:clt}. By the Dominated
  Convergence Theorem, we obtain \eqref{eq:clt-implicit-r:proof:psi:3}
  as desired.
\end{proof}

\section{Proof of ASIP for Axiom~A systems}
We devote this section to the proof of the ASIP for Axiom~A systems.
Recall that $T \colon X \to X$ is an Axiom~A diffeomorphism restricted
to a mixing component, and $\mu$ is a Gibbs measure corresponding to a
H{\"o}lder continuous potential. Exponential decay of correlations for
H{\"o}lder continuous observables, i.e., Condition~\ref{cond:doc}, is
known to hold in this setting \cite{bowen-2008-equilibrium}.
We begin with establishing a reduction to the non-invertible case
(Theorem~\ref{thm:asip:non-invertible:sup-growth}) by encoding
$(X,T,\mu)$ as a sub-shift of finite type (SFT) and then reducing to
the one-sided shift space. We then prove the general ASIP
(Theorem~\ref{thm:asip:hyperbolic}), and finally the ASIP for the
shrinking target problem (Theorem~\ref{thm:asip:hyperbolic:targets}).

\subsection{Reduction to an SFT}
There exists a Markov partition of $X$, allowing one to encode
$(X,T,\mu)$ as an (SFT), which we will denote by
$(\sft,\shiftop,\mu^{*})$ (see \cite{bowen-2008-equilibrium}).
We consider the following function space on $\sft$:
For $\theta \in (0,1)$, let $\sftfun_{\theta}$ denote the space of
observables $\varphi \colon \sft \to \reals$ such that there exists
some $C > 0$ for which
$\totalvar_n \varphi < C \theta^{n}$ for all $n \geq 0$, where
\[
  \totalvar_n \varphi
  \coleq \sup \{| \varphi(\underline{x}) - \varphi(\underline{y}) |
  : x_i = y_i \text{ for all } i \in [-n, n]\}.
\]
The space $\sftfun_{\theta}$ is a Banach space under the norm
\[
  \sftnorm{\varphi}{\theta} = \supnorm{\varphi}
  + \sup_{n \geq 1} (\theta^{-n} \totalvar_n \varphi).
\]
Let $\pi \colon \sft \to X$ denote the quotient map corresponding to
the choice of Markov partition.
Then $\pi \circ \shiftop = T \circ \pi$ and $\mu = \pi^{*} \mu^{*}$.
Given a sequence $(\varphi_n)$ of $\alpha$\nobreakdash-H{\"o}lder
continuous observables on $X$, we construct a corresponding sequence
$(\varphi_n^{*})$ of observables on $\sft$ by
$\varphi_n^{*} = \varphi_n \circ \pi$. Clearly
$\supnorm{\varphi_n^{*}} = \supnorm{\varphi_n}$.

The following lemma, due to Hirsch and Pugh~\cite{hirsch-1970-stable}
allows us to transfer the H{\"o}lder continuity of $\varphi_n$ to
$\varphi_n^{*}$. We use result as formulated in
\cite[Lemma~4.1]{bowen-2008-equilibrium}.

\begin{lemma}[\cite{hirsch-1970-stable}]
\label{lem:hirsch}
  There exist $\varepsilon > 0$ and $\theta \in (0,1)$ for which the
  following is true:
  if $x,y \in X$ and $d(T^{k}x,T^{k}y) \leq \varepsilon$ for all
  $k \in [-N,N]$, then $d(x,y) < \theta^{N}$.
\end{lemma}

We may assume that the Markov partition has diameter small than the
$\varepsilon$ appearing in Lemma~\ref{lem:hirsch}.
Thus, Lemma~\ref{lem:hirsch} implies that there exists $C_0 > 0$ and
$\theta \in (0,1)$ such that $(\varphi_n^{*}) \subset \sftfun_{\theta}$
with
\[
  \sftnorm{\varphi_n^{*}}{\theta} \leq C_0 \holnorm{\varphi_n}{\alpha}.
\]
Thus, the norm properties of $(\varphi_n)$ transfer to
$(\varphi_n^{*})$.

We now state and prove a modification of the `Sinai trick'. The usual
Sinai trick (see, e.g., \cite{bowen-2008-equilibrium,parry-1999-zeta})
is used to reduce certain properties~-- such as the existence of Gibbs
measures~-- of two-sided SFTs to those of one-sided SFTs. For a
function $\varphi \colon \sft \to \reals$ on the two-sided shift-space,
it is used to obtain a function $\varphi^{+} \colon \sft \to \reals$
depending \emph{only on future coordinates}, i.e.,
$\varphi^{+}(\underline{x}) = \varphi^{+}(\underline{y})$ whenever
$\underline{x}, \underline{y} \in \sft$ with $x_i = y_i$ for all
$i \geq 0$. One can then view $\varphi^{+}$ as a function on the
one-sided shift-space $\sftpos$.
The modification we make accounts for the non-stationary aspect of our
problem; we consider a sequence of functions $(\varphi_n^{*})$ as
opposed to just a single function $\varphi$.
The non-autonomous version of the Sinai trick used in
\cite[Corollary~6.2]{haydn-2017-almost} assumed that $(\varphi_n^{*})$
is uniformly bounded in $\sftnorm{\cdot}{\theta}$.
In our case, we cannot assume a uniform bound. At the same time, we
keep track of the expectation and variance of the new sequence of
observables, showing that they are comparable to the original sequence.

\begin{lemma}[A non-autonomous Sinai Trick]
\label{lem:sinai-trick}
  Let $\theta \in (0,1)$ and suppose that
  $(\varphi_n^{*}) \subset \sftfun_\theta$ is a sequence of observables
  $\varphi_n^{*} \colon \sft \to \reals$ such that
  \[
    \sup_{n \geq 1} \supnorm{\varphi_n^{*}} < \infty
    \quad \text{ and } \quad
    \holnorm{\varphi_n^{*}}{\theta} \leqs n^{p}
  \]
  for some constant $p > 0$. Denote the variance corresponding to the
  sequence $(\varphi_n^{*})$ by
  $\sftvar_n^2
  = \var( \sum_{k=1}^{n} \varphi_k^{*} \circ \shiftop^{k})$.
  Then there exists $\theta_0 \in (0,1)$ and a sequence
  $(\varphi_n^{+}) \subset \sftfun_{\theta_0}$ of observables
  $\varphi_n^{+} \colon \sft \to \reals$ depending only on future
  coordinates such that
  \begin{gather}
  \label{eq:sinai:norms}
    \supnorm{\varphi_n^{+}} \leqs \log n
    \quad \text{ and } \quad
    \holnorm{\varphi_n^{+}}{\theta_0} \leqs n^{p} \\
  \label{eq:sinai:ae}
    \biggl| \sum_{k=1}^{n} \varphi_k^{+} \circ \shiftop^{k}
      - \sum_{k=1}^{n}
    \varphi_k^{*} \circ \shiftop^{k} \biggr| \leqs \log n
    \quad \text{$\mu^{*}$-a.e.}, \\
  \label{eq:sinai:expectation}
    \biggl| \sum_{k=1}^{n} \mu^{*}(\varphi_k^{+}) - \sum_{k=1}^{n}
    \mu^{*}(\varphi_k^{*}) \biggr| \leqs \log n, \\
  \label{eq:sinai:variance}
    \biggl| \var\biggl( \sum_{k=1}^{n} \varphi_k^{+} \circ \shiftop^{k}
    \biggr) - \sftvar_n^2
    \biggr| \leqs \sftvar_n \log n + (\log n)^{2}.
  \end{gather}
\end{lemma}

Our proof modifies the one of the usual Sinai trick (see
\cite{boshernitzan-1993-quantitative,parry-1999-zeta}), and
extends the modification of \cite[Corollary~6.2]{haydn-2017-almost}.

\begin{proof}
  We begin by constructing the sequence of functions $(\varphi_n^{+})$.
  For each allowable digit $i$ in $\sft$, fix an element
  $\underline{x}^{i} \in \sft$ with $x_0^{i} = i$.
  Define the function $G \colon \sft \to \sft$ by
  $\underline{y} = G(\underline{x})$, where
  \[
    y_k = \begin{cases}
      x_k   &  k \geq 0, \\
      x_k^{i} & k < 0 \text{ and } x_0 = i.
    \end{cases}
  \]
  This function clearly only depends on future coordinates.
  For each $n \geq 1$, define
  \[
    v_n(\underline{x}) = \sum_{k=n}^{\infty} \varphi_k^{*}(
    \shiftop^{k-n} \underline{x}) - \varphi_k^{*}( \shiftop^{k-n} G
    \underline{x}).
  \]
  Notice that the series converges everywhere and
  \begin{equation}
  \label{eq:sinai:vn:supnorm}
    \supnorm{v_n} \leqs \log n.
  \end{equation}
  Indeed, let $\beta_n = D \log n$ for some constant
  $D > - p/\log \theta$. Then, for all $\underline{x} \in \sft$,
  \[
    |v_n(\underline{x})|
    \leq 2 \beta_n \log(n + \beta_n)
    + \sum_{k=n+\beta_n}^{\infty} |\varphi_k^{*}(\shiftop^{k-n}
    \underline{x}) - \varphi_k^{*}(\shiftop^{k-n}G\underline{x})|
  \]
  since $\supnorm{\varphi_k^{*}} \leq \log(n + \beta_n)$ for
  $k \leq n + \beta_n$. By the definition of $\beta_n$, we have
  $\log (n + \beta_n) \leqs \beta_n$. Combined with the fact that
  $(\varphi_k^{*}) \subset \sftfun_\theta$, we have
  \[
  \begin{split}
    |v_n(\underline{x})|
    &\leqs \beta_n + \sum_{k=n+\beta_n}^{\infty}
      \sftnorm{\varphi_k^{*}}{\theta} \theta^{k-n} \\
    &\leqs \beta_n + \sum_{k=n+\beta_n} k^{p} \theta^{k-n} \\
    &\leqs \beta_n + n^{p} \theta^{\beta_n}.
  \end{split}
  \]
  Since $n^{p}\theta^{\beta_n} = n^{p + D \log \theta}\to 0$ by the
  choice of $D$, we obtain \eqref{eq:sinai:vn:supnorm}.

  Now, for each $n \geq 1$ and $\underline{x} \in \sft$, define
  \begin{equation}
  \label{eq:sinai:defn}
  \begin{split}
    \varphi_n^{+}(\underline{x})
    &= \varphi_n^{*}(\underline{x}) - v_n(\underline{x})
      + v_{n+1}(\shiftop \underline{x}) \\
    &= \varphi_n^{*}(G \underline{x}) + \sum_{k=n+1}^{\infty}
      \varphi_k^{*}(\shiftop^{k-n} G \underline{x})
      - \varphi_k^{*}(\shiftop^{k-n-1} G \shiftop \underline{x}).
  \end{split}
  \end{equation}
  Clearly, $\varphi_n^{+}$ depends only on future coordinates, and
  $\supnorm{\varphi_n^{+}} \leqs \log n$ by \eqref{eq:sinai:vn:supnorm}
  and the assumption on $(\varphi_n^{*})$.

  We will now show that $\varphi_n^{+} \in \sftfun_{\theta_0}$ with
  $\sftnorm{\varphi_n^{+}}{\theta_0} \leqs n^{p}$ for
  $\theta_0 = \theta^{1/6}$. 
  It suffices to show that $v_n \in \sftfun_{\theta_0}$ and
  $\sftnorm{v_n}{\theta_0} \leqs n^{p}$.
  In fact, we establish
  \begin{equation}
  \label{eq:totalvar:3N}
    \totalvar_{3N} v_n \leq C_0 n^{p} \theta^{N/2}
    \quad N \geq 0
  \end{equation}
  for some constant $C_0 > 0$. This implies
  \[
    \totalvar_{3N+j} v_n \leq \Bigl(\frac{C_0}{\theta^{1/3}}\Bigr)
    n^{p} (\theta^{1/6})^{3N + j}
    \leqs n^{p} \theta_0^{3N+j}
    \quad N \geq 0, j = 0,1,2.
  \]

  Fix $N \geq 0$ and $n \geq 1$.
  Let $\underline{x}, \underline{y} \in \sft$ such that $x_i = y_i$ for
  all $|i| \leq 3N$. Then
  \begin{align*}
    | \varphi_k^{*}(\shiftop^{k-n}\underline{x})
      - \varphi_k^{*}(\shiftop^{k-n}\underline{y}) |
    &\leqs \sftnorm{\varphi_k^{*}}{\theta} \theta^{3N - (k-n)}
      \quad \text{for all } n \leq k \leq n+N, \\
    | \varphi_k^{*}(\shiftop^{k-n}G\underline{x})
      - \varphi_k^{*}(\shiftop^{k-n}G\underline{y}) |
    &\leqs \sftnorm{\varphi_k^{*}}{\theta} \theta^{3N - (k-n)}
      \quad \text{for all } n \leq k \leq n+N,
  \end{align*}
  and
  \begin{align*}
    | \varphi_k^{*}(\shiftop^{k-n}\underline{x})
      - \varphi_k^{*}(\shiftop^{k-n}G\underline{x}) |
    &\leq \sftnorm{\varphi_k^{*}}{\theta} \theta^{k-n}
      \quad \text{for all } k \geq n, \\
    | \varphi_k^{*}(\shiftop^{k-n}\underline{y})
      - \varphi_k^{*}(\shiftop^{k-n}G\underline{y}) |
    &\leq \sftnorm{\varphi_k^{*}}{\theta} \theta^{k-n}
      \quad \text{for all } k \geq n.
  \end{align*}
  Therefore,
  \[
  \begin{split}
    |v_n(\underline{x}) - v_n(\underline{y})|
    &\leq 2\sum_{k=n}^{n+N} \sftnorm{\varphi_k^{*}}{\theta}
      \theta^{3N-(k-n)} + 2\sum_{k=n+N}^{\infty}
      \sftnorm{\varphi_k^{*}}{\theta} \theta^{k-n} \\
    &\leqs (n+N)^{p} \sum_{k=n}^{n+N} \theta^{3N-(k-n)}
      + \sum_{k=n+N}^{\infty} k^{p} \theta^{k-n} \\
    &\leqs (n+N)^{p} \theta^{2N} + (n+N)^{p} \theta^{N} \\
    &\leqs n^{p} \theta^{N} + N^{p} \theta^{N} \\
    &\leqs n^{p} \theta^{N} + \theta^{N/2} \\
    &\leqs n^{p} \theta^{N/2},
  \end{split}
  \]
  where we kill the polynomial factor $N^{p}$ in the fourth inequality
  with the exponential factor $\theta^{N/2}$, leaving an exponential
  factor $\theta^{N/2}$.
  We have therefore established \eqref{eq:totalvar:3N}, from which it
  follows $\sftnorm{v_n}{\theta_0} \leqs n^{p}$.
  Hence, we have shown \eqref{eq:sinai:norms} to hold for
  $(\varphi_n^{+})$.

  The estimates \eqref{eq:sinai:ae}--\eqref{eq:sinai:variance} now
  follow from simple calculations. From \eqref{eq:sinai:defn}, we see
  \[
    \sum_{k=1}^{n} \varphi_k^{+} \circ \shiftop^{k} - \sum_{k=1}^{n}
    \varphi_k^{*} \circ \shiftop^{k}
    = v_{n+1} \circ \shiftop^{n+1} - v_1 \circ \shiftop.
  \]
  The estimates in \eqref{eq:sinai:ae} and \eqref{eq:sinai:expectation}
  follow from a straightforward application of
  \eqref{eq:sinai:vn:supnorm}.
  Lastly, notice that
  \begin{multline*}
    \biggl( \sum_{k=1}^{n} \tilde{\varphi}_k^{+} \circ \shiftop^{k}
      \biggr)^2
    = \biggl( \sum_{k=1}^{n} \tilde{\varphi}_k^{*} \circ \shiftop^{k}
      \biggr)^2 + 2\biggl( \sum_{k=1}^{n} \tilde{\varphi}_k^{*} \circ
      \shiftop^{k} \biggr) (\tilde{v}_{n+1} \circ \shiftop^{n+1}
      - \tilde{v}_1) \\
    + (\tilde{v}_{n+1} \circ \shiftop^{n+1} - \tilde{v}_1)^2,
  \end{multline*}
  which implies, together with \eqref{eq:sinai:vn:supnorm}, that
  \[
    \biggl| \var \biggl( \sum_{k=1}^{n} \tilde{\varphi}_k^{+} \circ
      \shiftop^{k} \biggr) - \sftvar_n^2 \biggr|
    \leqs \sftvar_n \log n + (\log n)^{2},
  \]
  where we have used H{\"o}lder's inequality to show
  \[
    \int \biggl| \sum_{k=1}^{n} \tilde{\varphi}_k^{*} \circ
    \shiftop^{k} \biggr| \diff\mu^{*} \leq \sftvar_n.
  \]
  We have thus established \eqref{eq:sinai:variance} and we conclude
  the proof.
\end{proof}

\subsection{Proof of Theorem~\ref{thm:asip:hyperbolic}}
The proof is now a simple application of Lemma~\ref{lem:sinai-trick}.

\begin{proof}
  Given the sequence $(\varphi_n)$ of $\alpha$\nobreakdash-H{\"o}lder
  continuous observables on $X$, obtain the corresponding sequence
  $(\varphi_n^{*})$ on $\sft$ such that
  \[
    \sup_{n \geq 1} \supnorm{\varphi_n^{*}} < \infty
    \quad \text{ and } \quad
    \sftnorm{\varphi_n^{*}}{\theta} \leqs \holnorm{\varphi_n}{\alpha}.
  \]
  By Lemma~\ref{lem:sinai-trick}, we obtain the sequence
  $(\varphi_n^{+}) \subset \sftfun_{\theta_0}$ of observables
  depending only on future coordinates. Viewing $(\varphi_n^{+})$ as a
  sequence of observables on $\sftpos$, we look to apply
  Theorem~\ref{thm:asip:non-invertible:sup-growth}.

  Let
  $\sftvarpos_n^2
  = \var( \sum_{k=1}^{n} \varphi_k^{+} \circ \shiftop^{k})$.
  By Lemma~\ref{lem:sinai-trick}, the assumptions on the original
  sequence $(\varphi_n)$ and variance $\asipvar_n^2$ transfer to
  $(\varphi_n^{+})$ and $\sftvarpos_n^2$.
  Since space $(\sftpos, \sigmapos, \mu^{*})$ satisfies
  Condition~\ref{cond:transfer-operator},
  we can therefore conclude that
  $(\tilde{\varphi}_n^{+} \circ \shiftop^{n})$ satisfies \eqref{asip}
  for any $\beta < \min \{\frac{1}{2}, 1 - \frac{1}{2\delta}\}$ with
  $\sum_{k=1}^{n} \EE[Z_k^2] = \sftvarpos_n^2
  + \bigo( \sftvarpos_n (\log \sftvarpos_n)^2 )$.
  By Lemma~\ref{lem:sinai-trick}, this transfers to the sequence
  $(\tilde{\varphi}_n^{*} \circ \shiftop^{n})$, and thus to
  $(\tilde{\varphi}_n \circ T^{n})$.
\end{proof}

\subsection{Proof of Theorem~\ref{thm:asip:hyperbolic:targets}}
We now apply Theorem~\ref{thm:asip:hyperbolic} to the shrinking target
problem. We will use Conditions~\ref{cond:frostman} and
\ref{cond:thin-annuli} to approximate the indicator functions of balls
by H{\"o}lder continuous observables. We first state a lemma which
ensures that the hypothesis~\ref{item:asip:hyperbolic:targets:var} in
Theorem~\ref{thm:asip:hyperbolic:targets} implies a lower growth bound
on the variance.
Recall that $B_n = B(y_n,r_n)$ is a sequence of balls, and
\[
  \asipvarlem_n^2 \coleq \var\biggl( \sum_{k=1}^{n}
    \charfun_{B(y_k,r_k)} \circ T^{k} \biggr)
  \quad \text{and} \quad
  \exmsum_n = \sum_{k=1}^{n} \mu(B(y_k,r_k)).
\]

\begin{lemma}
\label{lem:var:shrinking-target:sigma}
  Suppose $(X,T,\mu)$ is an m.p.s.\ and let $B_n = B(y_n,r_n$) be a
  sequence of balls with centres $y_n \in \supp\mu$.
  Write $\asipmeas_n = \mu(B_n)$.
  Assume that
  \begin{enumerate}[label=(\roman*)]
    \item
      Condition~\ref{cond:doc} holds;
    \item
      $\mu$ satisfies
      Conditions~\ref{cond:frostman} and \ref{cond:thin-annuli};
    \item
      $(\asipmeas_n)$ satisfies Condition~\ref{cond:sequence} for some
      $\seqlow \in (0,1)$ and $\seqhigh > 1$.
  \end{enumerate}
  Then
  \[
    \liminf_{n \to \infty} \frac{\asipvarlem_n^2}{\exmsum_n} \geq 1.
  \]
  In particular, $\asipvarlem_n^2 \geqs n^{1-\seqlow}$.
\end{lemma}

\begin{proof}
  The proof of the lemma is almost identical to that of
  Lemma~\ref{lem:r-implicit:liminf:s} (see
  \text{\S}\ref{sec:proof:r-implicit:liminf}).
\end{proof}

We now prove the ASIP for the shrinking target problem. We approximate
the indicator functions $\charfun_{B_n}$ by H{\"o}lder continuous
functions. We then establish the ASIP for the approximations, and
conclude by transferring the ASIP back to the indicator functions.

\begin{proof}[Proof of Theorem~\ref{thm:asip:hyperbolic:targets}]
  Write $B_n = B(y_n,r_n)$ for a sequence of points $y_n \in \supp \mu$
  and a sequence of radii $r_n \in [0,\infty)$.
  Since the centres $y_n$ are in $\supp\mu$, we have $r_n \to 0$. Hence
  by Lemma~\ref{lem:wlog}, we may assume without loss of generality
  that $r_n \leq \thinanr$ for all $n$.
  Write $\frostan = \min \{\frost,\thinan\}$.

  Let $\kappa > 0$ be a constant to be specified later and write
  $\varepsilon_k = k^{-\kappa}$. Apply Lemma~\ref{lem:construction}
  with $\abr = r_k$ and $\varepsilon = \varepsilon_k$ and obtain
  $\alpha$\nobreakdash-H{\"o}lder continuous functions
  $\varphi_k \colon X \to [0,1]$ such that
  \begin{equation}
  \label{eq:cor:hyp:regularity}
    \charfun_{B_k(y_k, r_k)} \leq \varphi_k
    \leq \charfun_{B_k(y_k, r_k+\varepsilon_k)}
    \quad \text{ and } \quad
    \holnorm{\varphi_k}{\alpha} \leqs k^{-\kappa}.
  \end{equation}
  Thus $(\varphi_n)$ satisfies the regularity conditions of
  Theorem~\ref{thm:asip:hyperbolic}.

  \step[establishing the ASIP for $(\varphi_n)$]
  To conclude the ASIP for $(\tilde{\varphi}_n \circ T^{n})$, we must
  first show
  \begin{equation}
  \label{eq:approximate:variance}
    \asipvarA_n^2
    \coleq \var \biggl( \sum_{k=1}^{n} \varphi_k \circ T^{k} \biggr)
    \geq n^{\delta}
  \end{equation}
  for some $\delta > 1/2$.
  By Lemma~\ref{lem:var:shrinking-target:sigma}, we have
  $\asipvar_n^2 \geqs \asipmsum_n \geq n^{1-\seqlow}$.
  Since we assume $\seqlow < 1/2$, we have
  $\asipvar_n^2 \geqs n^{\delta}$ for $\delta = 1 - \seqlow > 1/2$.
  This transfers to \eqref{eq:approximate:variance} by the
  relationship
  \begin{equation}
  \label{eq:cor:hyp:variance}
    \asipvarA_n^2 = \asipvar_n^2 + \bigo(\log \asipvar_n),
  \end{equation}
  which we will show now.

  Expanding the expressions for the variances, we have
  \begin{align*}
    \asipvar_n^2 &= \sum_{k=1}^{n} \asipmeas_k - \asipmeas_k^2
      + \sum_{k=1}^{n-1} \sum_{j=k+1}^{n}
      \covar( \charfun_{B_k}, \charfun_{B_j} \circ T^{j-k}), \\
    \asipvarA_n^2 &= \sum_{k=1}^{n} \mu(\varphi_k^2)
      - \mu(\varphi_k)^2 + \sum_{k=1}^{n-1} \sum_{j=k+1}^{n} \covar(
      \varphi_k, \varphi_j \circ T^{j-k}).
  \end{align*}
  By \eqref{eq:cor:hyp:regularity},
  \[
    \biggl|\sum_{k=1}^{n} \mu(\varphi_k^2) - \mu(\varphi_k)^2
    - \sum_{k=1}^{n} \asipmeas_k - \asipmeas_k^2 \biggr| \leqs 1
  \]
  Hence,
  \begin{equation}
  \label{eq:cor:hyp:var:1}
    |\asipvarA_n^2 - \asipvar_n^2|
    \leqs 1 + \biggl| \sum_{k=1}^{n-1} \sum_{j=k+1}^{n}
    \covar(\varphi_k, \varphi_j \circ T^{j-k})
    - \covar(\charfun_{B_k}, \charfun_{B_j} \circ T^{j-k}) \biggr|.
  \end{equation}
  We split the covariance estimation up into when $j-k$ is small and
  when it is large. We estimate first when $j-k$ is large.

  Let $D > 0$ be a constant to be specified later, and write
  $\beta_n = D \log n$. Applying Lemma~\ref{lem:construction} with
  $r = r_k$ and $\varepsilon = \varepsilon_n$
  obtain $\alpha$\nobreakdash-H{\"o}lder continuous functions
  $g_{n,k} \colon X \to [0,1]$ such that, for all $k\leq n$,
  \begin{gather*}
    \holnorm{g_{n,k}}{\alpha} \leqs n^{\kappa}, \\
    \charfun_{B(y_k,r_k)} \leq g_{n,k}
      \leq \charfun_{B(y_k,r_k+\varepsilon_n)}, \\
    |\mu(g_{n,k}) - \asipmeas_k| \leqs n^{-\frostan\kappa}.
  \end{gather*}
  As a consequence, we obtain
  $|\mu(g_{n,k}) \mu(g_{n,j}) - \asipmeas_k \asipmeas_j|
  \leqs \varepsilon_n^{-\frostan\kappa}$
  and therefore
  \[
    \Bigl| \int_X g_{n,k} g_{n,j} \circ T^{j-k} \diff\mu - \asipmeas_k
    \asipmeas_j \Bigr|
    \leqs n^{-\frostan\kappa} + n^{2\frostan\kappa} e^{-\doc (j-k)}
  \]
  by Condition~\ref{cond:doc}. Hence,
  \begin{equation*}
  \begin{split}
    \sum_{k=1}^{n-1} \sum_{j=k+\beta_n+1}^{n} \int_X \charfun_{B_k} & 
      \charfun_{B_j} \circ T^{j-k} \diff\mu - \asipmeas_k \asipmeas_j \\
    &\leq \sum_{k=1}^{n-1} \sum_{j=k+\beta_n+1}^{n} \int_X g_{n,k}
      g_{n,j} \circ T^{j-k} \diff\mu - \asipmeas_k \asipmeas_j \\
    &\leqs \sum_{k=1}^{n-1} \sum_{j=k+\beta_n+1}^{n}
      n^{-\frostan\kappa} + n^{2\frostan\kappa} e^{-\doc(j-k)} \\
    &\leqs n^{2-\frostan\kappa}
      + n^{1+2\frostan\kappa} e^{-\doc \beta_n} 
      = \smallo(1),
  \end{split}
  \end{equation*}
  by choosing $\kappa$ sufficiently large, and then $D$ in
  $\beta_n = D \log n$ sufficiently large.

  Thus,
  \[
    \sum_{k=1}^{n-1} \sum_{j=k+\beta_n+1}^{n} \int_X \charfun_{B_k}
    \charfun_{B_j} \circ T^{j-k} \diff\mu - \asipmeas_k \asipmeas_j
    \leq \smallo(1).
  \]
  By instead considering functions bounding $\charfun_{B_k}$ by below,
  we may obtain the reverse inequality and conclude that
  \begin{equation}
  \label{eq:cor:hyp:var:2}
    \biggl| \sum_{k=1}^{n-1} \sum_{j=k+\beta_n+1}^{n} \int_X \varphi_k
    \varphi_j \circ T^{j-k} \diff\mu - \asipmeas_k \asipmeas_j \biggr|
    = \smallo(1).
  \end{equation}

  Estimating the covariance part of $(\varphi_k)$ is more
  straightforward. By Condition~\ref{cond:doc}, we have
  \begin{equation}
  \label{eq:cor:hyp:var:3}
  \begin{split}
    \sum_{k=1}^{n-1} \sum_{j=k+\beta_n+1}^{n} \Bigl| \int_X \varphi_k
      \varphi_j \circ T^{j-k} \diff\mu - \mu(&\varphi_k)\mu(\varphi_j)
      \Bigr| \\
    &\leqs \sum_{k=1}^{n-1} \sum_{j=k+\beta_n+1}^{n}
      k^{\kappa} j^{\kappa} e^{-\tau(j-k)} \\
    &\leqs n^{2\kappa+1} e^{-\doc \beta_n}
    = \smallo(1),
  \end{split}  
  \end{equation}
  again by choosing $D$ in $\beta_n = D \log n$ sufficiently large.

  Combining \eqref{eq:cor:hyp:var:1}--\eqref{eq:cor:hyp:var:3}, we have
  \begin{equation}
  \label{eq:cor:hyp:var:4}
    |\asipvarA_n^2 - \asipvar_n^2| \leqs 1 + \biggl| \sum_{k=1}^{n-1}
    \sum_{j=k+1}^{k+\beta_n}
    \covar(\varphi_k, \varphi_j \circ T^{j-k})
    - \covar(\charfun_{B_k}, \charfun_{B_j} \circ T^{j-k}) \biggr|.
  \end{equation}

  We now treat the case when $j-k$ is small.
  By \eqref{eq:cor:hyp:regularity}, Condition~\ref{cond:thin-annuli}
  and the Cauchy--Schwarz inequality,
  \[
  \begin{split}
    \biggl|\int_X \varphi_k \varphi_j \circ T^{j-k} \diff\mu - \int_X
      & \charfun_{B_k} \charfun_{B_j} \circ T^{j-k} \diff\mu \biggr| \\
    &\leq \lnorm{\varphi_k - \charfun_{B_k}}{2} \lnorm{\varphi_j}{2}
      + \lnorm{\varphi_j - \charfun_{B_j}}{2} \lnorm{\varphi_k}{2} \\
    &\leqs \varepsilon_k^{\thinan/2} \asipmeas_j^{1/2}
      + \varepsilon_j^{\thinan} \asipmeas_k^{1/2} \\
    &\leqs \varepsilon_k^{\thinan/2} = k^{-\thinan\kappa/2},
  \end{split}
  \]
  where we have used $\varepsilon_j \leq \varepsilon_k$ whenever
  $k \leq j$. Therefore,
  \begin{equation}
  \label{eq:cor:hyp:short:1}
    \sum_{k=1}^{n-1} \sum_{j=k+1}^{k+\beta_n} \Bigl| \int_X \varphi_k
      \varphi_j \circ T^{j-k} \diff\mu - \int_X \charfun_{B_k}
      \charfun_{B_j} \circ T^{j-k} \diff\mu \Bigr|
    \leqs \beta_n \sum_{k=1}^{n-1}
      k^{-\thinan\kappa/2}
    \leqs \beta_n
  \end{equation}
  again by choosing $\kappa$ sufficiently large.
  At the same time,
  $|\mu(\varphi_k) \mu(\varphi_j) - \asipmeas_k \asipmeas_j|
  \leqs k^{-\thinan\kappa}$
  and
  \begin{equation}
  \label{eq:cor:hyp:short:2}
    \sum_{k=1}^{n-1} \sum_{j=k+1}^{k+\beta_n}
      | \mu(\varphi_k)\mu(\varphi_j) - \asipmeas_k \asipmeas_j|
    \leqs \sum_{k=1}^{n-1} \sum_{j=k+1}^{k+\beta_n}
      k^{-\thinan\kappa}
    \leqs \beta_n.
  \end{equation}
  Combining \eqref{eq:cor:hyp:short:1} and \eqref{eq:cor:hyp:short:2},
  we have
  \[
    \sum_{k=1}^{n-1} \sum_{j=k+1}^{k+\beta_n} |
    \covar(\varphi_k, \varphi_j \circ T^{j-k})
    - \covar(\charfun_{B_k}, \charfun_{B_j} \circ T^{j-k}) |
    \leqs \beta_n
  \]
  and therefore obtain
  \[
    | \asipvarA_n^2 - \asipvar_n^2 | \leqs \beta_n \leqs \log n
  \]
  by \eqref{eq:cor:hyp:var:4}. Since $\asipvar_n \geqs n^{\delta}$, we
  have $\log \asipvar_n \geqs \log n$ and have therefore established
  \eqref{eq:cor:hyp:variance}.

  \step[establishing the ASIP for $(\charfun_{B_n})$]
  By \eqref{eq:cor:hyp:regularity},
  \[
    0 \leq \sum_{k=1}^{n} \varphi_k \circ T^{k}
      - \charfun_{B_k} \circ T^{k}
    \leq \sum_{k=1}^{n} \charfun_{A_k} \circ T^{k},
  \]
  where $A_k = B(y_k,r_k+\varepsilon_k) \setminus B(y_k,r_k)$. Now
  $\mu(A_k) \leqs \varepsilon_k^{\thinan} \leqs k^{-\thinan\kappa}$
  by Condition~\ref{cond:thin-annuli}. Choosing $\kappa$ sufficiently
  large, we have $\sum_{k=1}^{\infty} \mu(A_k) < \infty$.
  By the Borel--Cantelli lemma, we conclude that
  $\sum_{k=1}^{\infty} \charfun_{A_k} \circ T^{k}$ is finite $\mu$-a.e.
  Furthermore, $\mu(\varphi_k) - \mu(B_k) \leq \mu(A_k)$ is
  summable. Thus,
  \begin{equation}
  \label{eq:approximate:BC}
    \biggl| \sum_{k=1}^{n} \tilde{\varphi}_k \circ T^{k}(x)
    - \sum_{k=1}^{n} \tilde{\charfun}_{B_k} \circ T^{k}(x) \biggr|
    \leqs_x 1 \quad \text{$\mu$-a.e.\ $x \in X$}.
  \end{equation}
  The estimates \eqref{eq:cor:hyp:variance} and
  \eqref{eq:approximate:BC} imply that
  $(\tilde{\charfun}_{B_k} \circ T^{k})$ satisfies \eqref{asip} with
  rate $\beta < \min \{\frac{1}{2}, 1 - \frac{1}{2\delta}\}$ and
  $\sum_{k=1}^{n} \EE[Z_k^2] = \asipvar_n^2 + \bigo( \asipvar_n (\log
  \asipvar_n)^2)$
  if and only if
  $(\tilde{\varphi}_k \circ T^{k})$ satisfies \eqref{asip} with the
  same rate $\beta$ and
  $\sum_{k=1}^{n} \EE[Z_k^2] = \asipvarA_n^2
  + \bigo( \asipvarA_n (\log \asipvarA_n)^2)$.
\end{proof}

\section{Proof of ASIP for non-invertible systems}
We modify the approach laid out in \cite{haydn-2017-almost}, which is
based on the following abstract martingale framework by Cuny and
Merlev\`ede~\cite{cuny-2015-strong}. 
We incorporate their remark \cite[Remark~2.4]{cuny-2015-strong} into
the statement, which weakens the conditions on the sequence of
second-moments in order to allow for some growth.

\begin{theorem}[{\cite[Theorem~2.3]{cuny-2015-strong}}]
  \label{thm:asip:abstract}
  Let $(X_n)$ be a sequence of square-integrable random variables
  adapted to a non-increasing filtration $(\gcal_n)_n$
  (i.e., $X_n$ is $\gcal_n$-measurable for all $n \in \naturals$).
  Assume that $\CE{X_n}{\gcal_{n+1}} = 0$ a.s., that
  $\abstractvar_n^2 \coleq \sum_{k=1}^{n} \EE[X_k^2] \to \infty$ and
  that there exists $s \in [0,1)$ such that
  $\EE[X_n^2] = \bigo( \abstractvar_n^{2s} )$ for all $n$.
  Let $(a_n)_n$ be a
  non-decreasing sequence of positive numbers such that
  $(a_n/\abstractvar_n^2)_n$ is non-increasing and
  $(a_n/\abstractvar_n)_n$ is non-decreasing. Assume that
  \begin{gather}
    \tag{A}\label{eq:condition:A}
    \sum_{k=1}^{n} \CE{X_k^2}{\gcal_{k+1}} - \EE[X_k^2] =
    \smallo(a_n) \quad \text{$\prob$-a.s.,} \\
    \tag{B}\label{eq:condition:B}
    \sum_{n=1}^{\infty} a_n^{-v} \EE[ |X_n|^{2v} ] < \infty \quad
    \text{for some $1 \leq v \leq 2$}.
  \end{gather}
  Then, enlarging our probability space if necessary, it is possible to
  find a sequence $(Z_k)_{k \geq 1}$ of independent centred Gaussian
  variables with $\EE[Z_k^2] = \EE[X_k^2]$ such that
  \[
    \sup_{1 \leq k \leq n} \biggl| \sum_{i=1}^{k} X_i - \sum_{i=1}^{k}
    Z_i \biggr|
    = \smallo\bigl( (a_n( \log(\abstractvar_n^2/a_n)
    + \log \log a_n))^{1/2} \bigr) \quad \text{$\prob$-a.s.}
  \]
\end{theorem}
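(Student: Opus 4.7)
\medskip

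The plan is to exploit the reverse-martingale structure encoded in the hypothesis $\CE{X_n}{\gcal_{n+1}} = 0$ (with $(\gcal_n)$ non-increasing), and then reduce the ASIP to a strong approximation theorem for martingales via an embedding into Brownian motion. Reversing the index -- setting $Y_m = X_{N-m}$ for a large cutoff $N$ and $\hcal_m = \gcal_{N-m}$ -- makes $(Y_m,\hcal_m)$ an ordinary martingale difference sequence adapted to an increasing filtration, so standard tools apply. Alternatively, one can work directly with the reverse filtration and cite the reverse-martingale Skorohod embedding of Heyde--Scott. Either way, the first step is to produce, on an enlarged probability space, a Brownian motion $(W_t)$ together with non-negative random variables $T_k$ such that $\sum_{i=1}^{k} X_i$ has the same distribution as $W_{T_1 + \cdots + T_k}$ and $\CE{T_k}{\gcal_{k+1}} = \CE{X_k^2}{\gcal_{k+1}}$ almost surely, together with moment bounds $\EE[T_k^{v}] \leqs \EE[|X_k|^{2v}]$ for the exponent $v$ appearing in \eqref{eq:condition:B}.

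The second step is to show that the random clock $U_n := \sum_{k=1}^{n} T_k$ is close to the deterministic clock $\sigma_n^2$. Decompose
\[
  U_n - \sigma_n^2
  = \sum_{k=1}^{n} \bigl( T_k - \CE{T_k}{\gcal_{k+1}} \bigr)
  + \sum_{k=1}^{n} \bigl( \CE{X_k^2}{\gcal_{k+1}} - \EE[X_k^2] \bigr).
\]
The second sum is $\smallo(a_n)$ almost surely by \eqref{eq:condition:A}. The first is a reverse-martingale sum whose conditional moments are controlled by $\EE[|X_k|^{2v}]$; applying a Chow--Birnbaum--Marshall type maximal inequality together with \eqref{eq:condition:B} and the Kronecker lemma (with the weights $a_n$, using the monotonicity assumptions $a_n/\sigma_n^2 \searrow$ and $a_n/\sigma_n \nearrow$) yields that it too is $\smallo(a_n)$ almost surely. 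Consequently $U_n = \sigma_n^2 + \smallo(a_n)$ a.s.

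The third step converts this clock control into a uniform approximation. By the Lévy modulus of continuity for Brownian motion,
\[
  \sup_{1 \leq k \leq n} \bigl| W_{U_k} - W_{\sigma_k^2} \bigr|
  = \bigo\Bigl( \sqrt{ \Delta_n \bigl( \log(\sigma_n^2/\Delta_n)
    + \log\log \Delta_n \bigr)} \Bigr)
\]
almost surely, where $\Delta_n = \sup_{k\leq n} |U_k - \sigma_k^2|$; since $\Delta_n = \smallo(a_n)$ this produces precisely the rate stated in the conclusion. Finally, declare $Z_k := W_{\sigma_k^2} - W_{\sigma_{k-1}^2}$: these are independent centred Gaussians with $\EE[Z_k^2] = \EE[X_k^2]$, and the previous display rewrites as the claimed ASIP bound. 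The hypothesis $\EE[X_n^2] = \bigo(\sigma_n^{2s})$ with $s<1$ is used to guarantee that $\sigma_n^2$ does not jump too quickly, so that the log-log term behaves as advertised and the Kronecker step goes through.

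The main obstacle is the sharp control of the fluctuation of the random clock, i.e.\ showing $U_n - \sigma_n^2 = \smallo(a_n)$ rather than merely $\bigo(\sqrt{a_n})$. This requires pairing the Skorohod stopping-time moment bounds with \eqref{eq:condition:B} in precisely the right way, using the monotonicity of $a_n/\sigma_n^2$ to apply a Hájek--Rényi / Kronecker argument, and this is where the flexibility in the exponent $v \in [1,2]$ enters: larger $v$ gives stronger tail control on $T_k$ but demands higher moments on $X_k$, and one picks $v$ to balance these according to the available moment information. Once this step is in place, the Brownian modulus of continuity supplies the rate essentially for free, and the Gaussian sequence $(Z_k)$ is defined by construction.
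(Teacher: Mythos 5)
This statement is not proven in the paper at all: it is imported verbatim (with Remark~2.4 of the source folded in) from Cuny and Merlev\`ede, and the paper only ever \emph{applies} it. So there is no in-paper proof to compare against; the relevant benchmark is the original proof of \cite[Theorem~2.3]{cuny2015Strong}, and your sketch does follow its actual strategy: a Skorohod-type embedding for reverse martingale differences, a decomposition of the random clock $U_n-\sigma_n^2$ into the conditional-variance drift (handled by \eqref{eq:condition:A}) plus a martingale fluctuation of the stopping times (handled by \eqref{eq:condition:B} via a Chow-type strong law and Kronecker's lemma), and then the Hanson--Russo/Cs\"org\H{o}--R\'ev\'esz bound on increments of Brownian motion to convert $\sup_{k\leq n}|U_k-\sigma_k^2|=\smallo(a_n)$ into the stated rate, with $Z_k:=W_{\sigma_k^2}-W_{\sigma_{k-1}^2}$.

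Three points in your outline are genuinely delicate and are stated more loosely than they can be executed. First, for an ASIP you need $\sum_{i\leq k}X_i=W_{U_k}$ \emph{almost surely} on the enlarged space, not merely equality in distribution as you write; the standard remedy is to replace $(X_n)$ by an identically distributed copy constructed together with $W$, which is exactly what ``enlarging the probability space'' licenses, but it must be said. Second, the index reversal $Y_m=X_{N-m}$ produces a different finite martingale for each cutoff $N$, and the embeddings for different $N$ need not be consistent; one must either use a genuinely reverse-time embedding (Scott--Heyde) or argue along a fixed exhaustion, and this is where the hypothesis $\EE[X_n^2]=\bigo(\sigma_n^{2s})$, $s<1$, together with the monotonicity of $a_n/\sigma_n^2$ and $a_n/\sigma_n$, earns its keep --- not only in the Kronecker step as you suggest. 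Third, the Brownian increment bound is a deterministic-window statement ($\sup_{t\leq T}\sup_{s\leq h}|W_{t+s}-W_t|$ with fixed $h$), whereas your $\Delta_n$ is random; the usual fix is to apply it with $h=\varepsilon a_n$ for each fixed $\varepsilon>0$ on the event $\{\Delta_n\leq\varepsilon a_n\}$ and then let $\varepsilon\to0$, again using the monotonicity of $a_n/\sigma_n^2$ to keep the logarithmic factor uniform in $n$. With these gaps filled your argument is the Cuny--Merlev\`ede proof; as written it is a correct roadmap rather than a complete proof.
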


In order to establish Condition~\ref{eq:condition:A}, we employ the
G\`al--Koksma lemma~-- a standard tool for establishing quantitative
strong law of large numbers. We state the lemma as formulated by
Harman~\cite[Lemma~1.5]{harman-1998-metric}.

\begin{lemma}[G\`al--Koksma]
  \label{lem:galkoksma}
  Let $\mu$ be a probability measure on the space $X$.
  Let $(f_k)$ be a sequence of non-negative $\mu$-measurable functions,
  and let $(g_k)$ and $(\gamma_k)$ be sequences of real numbers such
  that
  \[
    0 \leq g_k \leq \gamma_k \quad k \in \naturals.
  \]
  Write $\Gamma(N) = \sum_{k=1}^{N} \gamma_k$
  and suppose that $\Gamma(N) \to \infty$ as $N \to \infty$. Suppose
  that, for arbitrary integers $1 \leq m \leq n$,
  \[
    \int_{X} \biggl( \sum_{k=m}^{n} (f_k(x) - g_k) \biggr)^2
    \diff\mu(x) \leqs \sum_{k=m}^{n} \gamma_k.
  \]
  Then, for any given $\varepsilon > 0$ and for $\mu$-a.e.\ $x$, we
  have
  \[
    \biggl| \sum_{k=1}^{N}f_k(x) - \sum_{k=1}^{N} g_k \biggr|
    \leqs_{x,\varepsilon} \Gamma(N)^{1/2} \bigl( \log \Gamma(N)
    \bigr)^{3/2 + \varepsilon} + \max_{1 \leq k \leq N} g_k.
  \]
\end{lemma}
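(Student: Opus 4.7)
The plan is to prove the lemma via the classical Rademacher--Menshov dyadic decomposition combined with a Borel--Cantelli argument along a subsequence $(N_p)$ chosen relative to the $\Gamma$-scale rather than the $N$-scale. Let $S_{m,n}(x) := \sum_{k=m+1}^{n} (f_k(x) - g_k)$ and abbreviate $S_n := S_{0,n}$. The hypothesis reads $\int_X S_{m,n}^2 \diff\mu \leqs \sum_{k=m+1}^{n} \gamma_k$, and the goal is an almost-sure uniform-in-$N$ bound on $|S_N|$.

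The first step is to establish a maximal inequality: for any $M \leq N$,
\[
  \int_X \max_{M \leq n \leq N} S_{M,n}^2 \diff\mu
  \leqs \bigl(\Gamma(N) - \Gamma(M)\bigr) \bigl(\log(N-M+2)\bigr)^2.
\]
I would prove this by the standard dyadic trick: any $n \in (M,N]$ admits, via the binary expansion of $n - M$, a decomposition of $(M,n]$ into at most $J \asymp \log(N-M)$ disjoint dyadic intervals, so $S_{M,n}$ is a sum of at most $J$ block sums $S_I$. Cauchy--Schwarz then gives $S_{M,n}^2 \leq J \sum_j S_{I_j}^2$; taking the maximum over $n$, integrating, and applying the variance hypothesis to each dyadic block contributes a total of at most $J \cdot (\Gamma(N) - \Gamma(M))$ per scale, with an extra factor of $J$ from summing over the $J$ scales.

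Next, I would set $N_p := \min\{N \in \naturals : \Gamma(N) \geq 2^p\}$ and consider the blocks $B_p := (N_{p-1}, N_p]$, which satisfy $\Gamma(N_p) - \Gamma(N_{p-1}) \leqs 2^p$. Writing $\mathfrak{m}_p := \max_{n \in B_p} |S_{N_{p-1},n}|$, Chebyshev combined with the maximal inequality on $B_p$ yields
\[
  \mu\bigl\{\mathfrak{m}_p > \lambda_p\bigr\}
  \leqs \lambda_p^{-2} \cdot 2^p \, (\log |B_p|)^2.
\]
A choice of thresholds $\lambda_p$ of order $2^{p/2} p^{1/2+\varepsilon/2} \log |B_p|$ makes the right-hand side of order $p^{-1-\varepsilon}$ and hence summable; the Borel--Cantelli lemma then yields $\mathfrak{m}_p \leqs_{x,\varepsilon} \lambda_p$ for all but finitely many $p$. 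Telescoping along $1 \le p \le P \asymp \log_2 \Gamma(N)$ and noting that $\sum_{p \leq P} 2^{p/2} \leqs \Gamma(N)^{1/2}$, one obtains the target bound $|S_{N_P}| \leqs \Gamma(N)^{1/2} (\log \Gamma(N))^{3/2+\varepsilon}$ provided one can pay for the extra factor $\log|B_p|$ within the powers of $p$.

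To finish, for $N$ strictly inside $B_{P+1}$ one applies the same block bound to conclude $|S_N - S_{N_P}| \leq \mathfrak{m}_{P+1}$, absorbed in the main term; the additive term $\max_{1 \leq k \leq N} g_k$ arises from the overshoot $\Gamma(N_p) - 2^p \leq \gamma_{N_p}$ at the boundary of each block and from invoking the one-sided estimate $S_n \geq -\sum_{k \leq n} g_k$ (valid since $f_k \geq 0$) when passing from $S_{N_P}$ to $S_N$. The main obstacle is the tight accounting required to pass from the natural $(\log N)$ factor of the Rademacher--Menshov inequality to the sharper $(\log \Gamma(N))$ factor in the conclusion: this mismatch is serious when $\gamma_k \to 0$ rapidly, since the block lengths $|B_p|$ may then be far larger than $\Gamma(N)$. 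The key device is to apply the maximal inequality only within individual $\Gamma$-dyadic blocks, where $\Gamma$ varies by only a factor of $2$, so that the $(\log)^2$ overhead multiplies a term of size $2^p$ rather than $\Gamma(N)$, and a careful optimisation of $\lambda_p$ closes the argument.
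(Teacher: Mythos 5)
The paper itself does not prove this lemma; it is quoted verbatim from Harman's \emph{Metric Number Theory}, so your attempt can only be measured against the classical Gál--Koksma argument. Against that standard, there is a genuine gap in the core chaining step. Your maximal inequality inside a block $B_p=(N_{p-1},N_p]$ is Rademacher--Menshov on the \emph{index} axis, so it carries the factor $(\log|B_p|)^2$, where $|B_p|$ is the number of indices in the block, and nothing in the hypotheses ties $|B_p|$ to $2^p$. If, say, $\gamma_k\asymp 1/k$, then $\Gamma(N)\asymp\log N$, $N_p\asymp e^{2^p}$ and $\log|B_p|\asymp 2^p$; your thresholds become $\lambda_p\asymp 2^{3p/2}p^{1/2+\varepsilon/2}$ and the telescoped bound is of order $\Gamma(N)^{3/2}$, far weaker than the asserted $\Gamma(N)^{1/2}(\log\Gamma(N))^{3/2+\varepsilon}$. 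You flag this mismatch yourself, but the ``key device'' you invoke -- applying the maximal inequality only within $\Gamma$-dyadic blocks -- is precisely what you already did, and it does not remove the $\log|B_p|$ overhead, because the Rademacher--Menshov loss is governed by the number of summands, not by how much $\Gamma$ varies. The classical proof instead performs the dyadic subdivision on the $\Gamma$-\emph{axis}: each range of $\Gamma$-values $[2^{p-1},2^p)$ is split into $\asymp p$ dyadic generations of $\Gamma$-subintervals, the index block corresponding to each subinterval gets a Chebyshev bound with variance $\leqs$ its $\Gamma$-length, an arbitrary $N$ is reached by chaining at most $\asymp p\asymp\log\Gamma(N)$ such block sums, and the fill-in at the finest $\Gamma$-scale is controlled using $f_k\geq 0$ and $g_k\leq\gamma_k$; a single index whose $\gamma_k$ straddles many $\Gamma$-levels is what produces the additive $\max_{1\leq k\leq N}g_k$. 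This $\Gamma$-axis chaining is the missing idea; with it all logarithmic factors are automatically powers of $\log\Gamma(N)$.

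A secondary problem is your accounting at the block boundaries. The claim $\Gamma(N_p)-\Gamma(N_{p-1})\leqs 2^p$ need not hold, since $\Gamma(N_p)<2^p+\gamma_{N_p}$ and $\gamma_{N_p}$ is unbounded in general; and your explanation that the additive term $\max_k g_k$ ``arises from the overshoot $\Gamma(N_p)-2^p\leq\gamma_{N_p}$'' cannot be right, because only $g_k\leq\gamma_k$ is assumed, so $\gamma_{N_p}$ is not dominated by $\max_k g_k$. The last index of each block must be handled separately (its own Chebyshev estimate, using $\gamma_{N_p}\leq\Gamma(N_p)$ and summability in $p$), or absorbed into the $\Gamma$-axis subdivision described above. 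Your argument as written does go through when $\Gamma(N)$ grows polynomially in $N$ (then $\log|B_p|\leqs p$), which happens to cover the application in this paper where $\Gamma(N)\geqs N^{1-\gamma}$, but it does not prove the lemma in the generality stated.
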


\subsection{Proof of Theorem~\ref{thm:asip:non-invertible:sup-growth}}
We now have the tools necessary for proving the main ASIP for
non-invertible systems.
We divide the proof into three steps.
First, we define the relevant quantities and establish some their
properties.
Then, we verify Condition~\ref{eq:condition:A} and conclude by
verifying \eqref{eq:condition:B}.

\begin{proof}[Proof of
Theorem~\ref{thm:asip:non-invertible:sup-growth}]
  By the abstract framework provided by
  Theorem~\ref{thm:asip:abstract}, we need only verify that
  Conditions~\ref{eq:condition:A} and \ref{eq:condition:B} hold for an
  appropriate sequence $(X_n)$ of reverse martingale differences.

  \step[Set-up and properties of $(X_n)$]
  \label{step:asip:non-invertible:1}
  For each $n \geq 1$, define
  $h_n = \sum_{k=1}^{n} P^{k} \varphi_{n-k}$ and
  \[
    \psi_n = \varphi_n + h_n - h_{n+1} \circ T
    = \tilde{\varphi}_n + \tilde{h}_n - \tilde{h}_{n+1} \circ T.
  \]
  Then
  $\CE{\psi_n \circ T^{n}}{T^{-n-1}\borel} = \CE{\psi_n}{T^{-1} \borel}
  \circ T^{n} = (P\psi_n) \circ T^{n} = 0$ $\mu$-a.e.\, which is the
  crucial property that allows us the define a reverse martingale
  difference.

  We will show that
  \begin{align}
  \label{eq:hn:log-growth}
    \supnorm{h_n} &\leqs (\log n)^2, \\
  \label{eq:hn:poly-growth}
    \holnorm{h_n}{\alpha} &\leqs n^{p},
  \end{align}
  but momentarily delay this in order to establish some basic
  properties of the sequence $(\psi_n \circ T^{n})$.
  Let $X_n = \psi_n \circ T^{n}$ and $\borel_n = T^{-n} \borel$ so that
  $(X_n)$ is a sequence of reverse martingale differences with respect
  to the filtration $(\borel_n)$.
  Define $\abstractvar_n^2 = \sum_{k=1}^{n} \EE[X_k^2]$ and notice that
  \[
    \int_X X_k X_j \diff\mu = \int_X \psi_k \psi_j \circ T^{j-k}
    \diff\mu = \int_X (P\psi_k) \psi_j \circ T^{j-k-1} \diff\mu = 0
  \]
  for $j > k$. Hence,
  \begin{equation}
  \label{eq:asip:X:var}
    \EE \biggl( \sum_{k=1}^{n} X_k \biggr)^2
    = \sum_{k=1}^{n} \EE[X_k^2] = \abstractvar_n^2.
  \end{equation}
  Since it is also the case that
  \begin{multline*}
    \biggl( \sum_{k=1}^{n} X_k \biggr)^2
    = \biggl( \sum_{k=1}^{n} \varphi_k \circ T^{k} \biggr)^2
      + 2\biggl( \sum_{k=1}^{n} \varphi_k \circ T^{k} \biggr)
      ( h_1 \circ T - h_{n+1} \circ T^{n+1}) \\
    + ( h_1 \circ T - h_{n+1} \circ T^{n+1})^2,
  \end{multline*}
  we obtain by \eqref{eq:hn:log-growth} that
  \[
    | \abstractvar_n^2 - \asipvar_n^2 |
    \leqs \asipvar_n (\log n)^2 + (\log n)^4.
  \]
  Since $\asipvar_n^2 \geqs \asipmsum_n \geqs n^{\delta}$,
  we have $\asipvar_n \to \infty$ as $n \to \infty$ and
  $\log n \leqs \log \asipvar_n$, which implies that
  \begin{equation}
  \label{eq:sigmahat:sigma}
    | \abstractvar_n^2 - \asipvar_n^2 |
    \leqs \asipvar_n (\log \asipvar_n)^2.
  \end{equation}
  Therefore $\abstractvar_n^2 \to \infty$ as $n \to \infty$.
  Furthermore, by \eqref{eq:hn:log-growth}, we have
  \begin{equation}
  \label{eq:Xn:variance}
    \EE[X_n^2] \leqs (\log n)^4 \leqs (\log \abstractvar_n)^4
    \leqs \abstractvar_n.
  \end{equation}

  We can now define the sequence $(a_n)$. Recall that $\delta > 1/2$,
  and let $\varepsilon < \min\{1,2 - 1/\delta\}$.
  The reason for this choice of $\varepsilon$ will become apparent when
  we establish Condition~\ref{eq:condition:B}.
  Define $a_n = \abstractvar_n^{2-\varepsilon}$.
  Then $a_n/\abstractvar_n^2 = \abstractvar_n^{-\varepsilon}$ is
  non-increasing and
  $a_n/\abstractvar_n = \abstractvar_n^{1-\varepsilon}$ is
  non-decreasing.

  It will also be useful to obtain some norm estimates for
  $\psi_n$.
  It follows directly from \eqref{eq:hn:log-growth} that
  \begin{equation}
  \label{eq:psin:log-growth}
    \supnorm{\psi_n} \leqs (\log n)^2.
  \end{equation}
  We also obtain from \eqref{eq:hn:poly-growth} that, up to replacing
  $P(\psi_n^2)$ by a $\mu$-a.e.\ identical observable,
  \begin{equation}
  \label{eq:psin:poly-growth}
    \holnorm{P(\psi_n^2)}{\alpha} \leqs n^{p} (\log n)^2.
  \end{equation}
  To see the second inequality, notice that the right-hand side of
  \[
    P(\psi_n^2) = P\bigl((\varphi_n + h_n)^2\bigr) - 2h_{n+1}
    P(\varphi_n + h_n) + h_{n+1}^2
    \quad \text{$\mu$-a.e.}
  \]
  consists entirely of H{\"o}lder continuous observables.
  Using that $\holnorm{\varphi_n}{\alpha} \leqs n^{p}$ together with
  \eqref{eq:hn:log-growth} and \eqref{eq:hn:poly-growth}, we obtain the
  desired estimate.
  We will apply \eqref{eq:psin:poly-growth} only after integrating, and
  hence the substitution of $P(\varphi_n^2)$ with a H{\"o}lder
  continuous representative is justified.

  With these properties established, it remains to prove
  \eqref{eq:hn:log-growth} and \eqref{eq:hn:poly-growth}.
  Let $D$ be a constant to be specified later, and write
  $\beta_n = D \log n$.
  We have
  \begin{equation}
  \label{eq:asip:1:h:sup}
    \supnorm{h_n}
    \leq \sum_{k=1}^{\beta_n} \supnorm{\varphi_{n-k}}
    + \sum_{k=\beta_n+1}^{n} \holnorm{P^{k} \varphi_{n-k}}{\alpha},
  \end{equation}
  where we have used that $\supnorm{Pf} \leq \supnorm{f}$ and
  $\supnorm{f} \leq \holnorm{f}{\alpha}$.
  Since $\supnorm{\varphi_n} \leqs \log n$ and
  $\holnorm{P^{k} \varphi_{n-k}}{\alpha}
  \leqs \holnorm{\varphi_{n-k}}{\alpha} e^{-\spgap k} \leqs (n-k)^{p}
  e^{-\spgap k}$
  by Condition~\ref{cond:transfer-operator}, we have
  \[
    \supnorm{h_n} \leqs \beta_n \log n
      + \sum_{k=\beta_n+1}^{n} (n-k)^{p} e^{-\spgap k}
    \leqs (\log n)^2 + n^{p+1} e^{-\spgap \beta_n}.
  \]
  Choosing $D$ in $\beta_n = D \log n$ sufficiently large, we have
  $n^{p + 1} e^{-\spgap \beta_n} = \smallo(1)$.
  This establishes \eqref{eq:hn:log-growth}.

  For \eqref{eq:hn:poly-growth}, notice that
  \[
    \holnorm{h_n}{\alpha}
    \leq \sum_{k=1}^{n} \holnorm{P^{k} \varphi_{n-k}}{\alpha}
    \leqs \sum_{k=1}^{n} \holnorm{\varphi_{n-k}}{\alpha} e^{-\spgap k}
    \leqs \sum_{k=1}^{n} (n-k)^{p} e^{-\spgap k}
    \leqs n^{p}.
  \]

  \step[Verifying Condition~\ref{eq:condition:A}]
  \label{step:asip:non-invertible:2}
  We now establish Condition~\ref{eq:condition:A} by using
  Lemma~\ref{lem:galkoksma}. As such, it suffices to show that
  \begin{equation}
    \label{eq:conditionA:galkoksma}
    \int_X \biggl( \sum_{k=m}^{n} \CE{X_k^2}{\borel_{k+1}} - \EE[X_k^2]
    \biggr)^2 \diff\mu \leqs \sum_{k=m}^{n} \gamma_k
    \quad \text{for all $m \leq n$},
  \end{equation}
  where $\gamma_k \geq \EE[X_k^2]$, and
  \begin{equation}
  \label{eq:Gamma}
    \Gamma(N) = \sum_{k=1}^{N} \gamma_k
    \leqs \abstractvar_N^{2} (\log \abstractvar_N)^{5}.
  \end{equation}
  Indeed, the conclusion of Lemma~\ref{lem:galkoksma} then implies
  that, for all $\varepsilon > 0$,
  \[
  \begin{split}
    \biggl| \sum_{k=1}^{N} \CE{X_k^2}{\borel_{k+1}} - \EE[X_k^2]
      \biggr|
    &\leqs_\varepsilon \Gamma(N)^{1/2}
      (\log \Gamma(N) )^{3/2+\varepsilon} + \max_{1 \leq k \leq N}
      \EE[X_k^2] \\
    &\leqs_\varepsilon \abstractvar_N
      (\log \abstractvar_N)^{3 + \varepsilon} \\
    &= \smallo ( a_n )
  \end{split}
  \]
  since $\max_{1 \leq k\leq N} \EE[X_k^2] \leqs \abstractvar_N$ by
  \eqref{eq:Xn:variance}, and $a_n = \abstractvar_n^{2-\varepsilon}$.

  To establish \eqref{eq:conditionA:galkoksma}, we reuse $D$ and
  $\beta_n = D \log n$ from Step~\ref{step:asip:non-invertible:1} and
  write the left-hand side of \eqref{eq:conditionA:galkoksma} as
  $A_{m,n} + 2B_{m,n}^{(1)} + 2B_{m,n}^{(2)}$, where
  \begin{align*}
    A_{m,n} &= \sum_{k=m}^{n} \int_X (\CE{X_k^2}{\borel_{k+1}}
      - \EE[ X_k^2])^2 \diff\mu, \\
    B_{m,n}^{(1)} &= \sum_{k=m}^{n-1} \sum_{j=k+1}^{k+\beta_k}
      \int_X \CE{X_k^2}{\borel_{k+1}} \CE{X_j^2}{\borel_{j+1}}
      - \EE[ X_k^2] \EE[ X_j^2] \diff\mu, \\
    B_{m,n}^{(2)} &= \sum_{k=m}^{n-1} \sum_{j=k+\beta_k+1}^{n}
      \int_X \CE{X_k^2}{\borel_{k+1}} \CE{X_j^2}{\borel_{j+1}}
      - \EE[ X_k^2] \EE[ X_j^2] \diff\mu.
  \end{align*}
  We proceed by estimating each term separately.

  It is straightforward to estimate $A_{m,n}$, namely
  \begin{equation}
    \label{eq:galkoksma:A}
    \begin{split}
      |A_{m,n}|
      &\leq \sum_{k=m}^{n} \int_X \CE{X_k^2}{\borel_{k+1}}^2 \diff\mu
        + \EE[X_k^2]^2 \\
      &\leq \sum_{k=m}^{n} \supnorm{X_k^2} \EE[X_k^2] \\
      &\leqs \sum_{k=m}^{n} (\log k)^4 \EE[X_k^2],
    \end{split}
  \end{equation}
  where we have used $\supnorm{X_k^2} \leq C (\log k)^4$, which follows
  from \eqref{eq:psin:log-growth}.

  For $B_{m,n}^{(1)}$, notice that
  \[
    B_{m,n}^{(1)} = \sum_{k=m}^{n-1} \sum_{j=k+1}^{k+\beta_k}
      \int_X \CE{X_k^2}{\borel_{k+1}} \bigl(\CE{X_j^2}{\borel_{j+1}}
      - \EE[X_j^2]\bigr) \diff\mu.
  \]
  Hence,
  \begin{equation}
  \label{eq:B1:1}
  \begin{split}
    |B_{m,n}^{(1)}| &\leq \sum_{k=m}^{n-1} \sum_{j=k+1}^{k+\beta_k}
      \int_X \CE{X_k^2}{\borel_{k+1}} \bigl|\CE{X_j^2}{\borel_{j+1}}
      - \EE[X_j^2]\bigr| \diff\mu \\
    &\leq \sum_{k=m}^{n-1} \sum_{j=k+1}^{k+\beta_k}
      2 \supnorm{X_j^2} \int_X \CE{X_k^2}{\borel_{k+1}} \diff\mu.
  \end{split}
  \end{equation}
  Now $k + \beta_k \leq (1+D) k$ for all $k$. In combination with
  \eqref{eq:psin:log-growth}, we obtain that
  $\supnorm{X_j^2} \leqs ( \log k )^4$ for all $j \leq k + \beta_k$.
  Hence, from \eqref{eq:B1:1} we obtain that
  \begin{equation}
    \label{eq:galkoksma:B1}
    \begin{split}
      |B_{m,n}^{(1)}| &\leqs \sum_{k=m}^{n-1}
        \sum_{j=k+1}^{k+\beta_k} (\log k)^4 \EE[X_k^2] \\
      &\leqs \sum_{k=m}^{n} (\log k)^{5} \EE[X_k^2],
    \end{split}
  \end{equation}

  For $B_{m,n}^{(2)}$, we have, using the fact that
  $\CE{X_j^2}{\borel_{j+1}}$ is $\borel_{k+1}$-measurable for
  $j \geq k$, that
  \begin{equation*}
  \begin{split}
    B_{m,n}^{(2)}
    &= \sum_{k=m}^{n-1} \sum_{j=k+\beta_k+1}^{n} \int_X
      \CE{X_k^2}{\borel_{k+1}} \CE{X_j^2}{\borel_{j+1}} \diff\mu
      - \EE[X_k^2] \EE[X_j^2] \\
    &= \sum_{k=m}^{n-1}
      \sum_{j=k+\beta_k+1}^{n} \int_X X_k^2 \CE{X_j^2}{\borel_{j+1}}
      \diff\mu - \EE[X_k^2] \EE[X_j^2] \\
    &= \sum_{k=m}^{n-1} \sum_{j=k+\beta_k+1}^{n}
      \int_X \psi_k^2 \CE{\psi_j^2}{\borel_1} \circ T^{j-k} \diff\mu
      - \EE[\psi_k^2] \EE[\psi_j^2] \\
    &= \sum_{k=m}^{n-1} \sum_{j=k+\beta_k+1}^{n}
      \int_X P(\psi_k^2) P(\psi_j^2) \circ T^{j-k} \diff\mu
      - \EE[\psi_k^2] \EE[\psi_j^2],
  \end{split}
  \end{equation*}
  where we have used that
  $\CE{f}{\borel_1} = \CE{f}{T^{-1}\borel} = (Pf) \circ T$ $\mu$-a.e.
  Using decay of correlations (see \eqref{eq:doc:transfer-operator}),
  we obtain that
  \[
    \biggl| \int_X P(\psi_k^2) P(\psi_j^2) \circ T^{j-k}
    \diff\mu - \EE[\psi_k^2] \EE[\psi_j^2] \biggr|
    \leqs \holnorm{P(\psi_k^2)}{\alpha} \lnorm{\psi_j^2}{1}
    e^{-\spgap(j-k)}.
  \]
  By \eqref{eq:psin:log-growth} and \eqref{eq:psin:poly-growth}, we
  therefore have
  \begin{equation}
  \label{eq:galkoksma:B2}
  \begin{split}
    B_{m,n}^{(2)}
    &\leqs \sum_{k=m}^{n-1} \sum_{j=k+\beta_k+1}^{n}
      \holnorm{P(\psi_k^2)}{\alpha} \holnorm{P(\psi_j^2)}{\alpha}
      e^{-\spgap(j-k)} \\
    &\leqs \sum_{k=m}^{n-1} \sum_{j=k+\beta_k+1}^{n} k^{p} (\log k)^2
      j^{p} (\log j)^2 e^{-\spgap(j-k)} \\
    &\leqs \sum_{k=m}^{n-1} k^{2p+1} e^{-\spgap \beta_k}.
  \end{split}
  \end{equation}
  Thus, by combining \eqref{eq:galkoksma:A}, \eqref{eq:galkoksma:B1}
  and \eqref{eq:galkoksma:B2}, we see that
  \[
    A_{m,n} + 2B_{m,n}^{(1)} + 2B_{m,n}^{(2)}
    \leqs \sum_{k=m}^{n} \gamma_k,
  \]
  where we have let
  $\gamma_k = (\log k)^{5} \EE[X_k^2] + k^{2p+1} e^{-\spgap \beta_k}$.
  Hence, \eqref{eq:conditionA:galkoksma} is satisfied.

  To establish \eqref{eq:Gamma}, notice that
  $\sum_{k=1}^{N} \EE[X_k^2] \to \infty$, and that
  $k^{2p+1} e^{-\spgap \beta_k}$ is summable by choosing $D$ in
  $\beta_k = D \log k$ sufficiently large. Hence,
  \[
    \Gamma(N) = \sum_{k=1}^{N} \gamma_k
    \leqs (\log N)^{5} \sum_{k=1}^{N} \EE[X_k^2]
    \leqs\abstractvar_N^2 (\log \abstractvar_N)^{5},
  \]
  where we have used that $\log N \leqs \log \abstractvar_N$.

  \step[Verifying Condition~\ref{eq:condition:B}]
  We now establish Condition~\ref{eq:condition:B} for $\nu = 2$.
  By the choice of $a_n$, we have
  $a_n^2 = \abstractvar_n^{2(1-\varepsilon)}
  \geqs n^{(2-\varepsilon)\delta}$, where we recall that
  $\abstractvar_n^2 \sim \asipvar_n^2$ by \eqref{eq:sigmahat:sigma}
  and $\asipvar_n^2 \geqs n^{\delta}$ by assumption. Thus,
  \[
    \sum_{n=1}^{\infty} a_n^{-2}\EE[X_n^{4}]
    \leqs \sum_{n=1}^{\infty}
    \frac{(\log n)^{8}}{n^{(2-\varepsilon)\delta}}
  \]
  converges as $(2 - \varepsilon) \delta > 1$.

  \conclusion
  We have now verified all the conditions of
  Theorem~\ref{thm:asip:abstract}. Therefore, there exists a sequence
  of independent centred Gaussian variables $(Z_k)$ such that
  $\EE[Z_k^2] = \EE[X_k^2]$ and
  \[
    \sup_{1 \leq k \leq n} \biggl| \sum_{i=1}^{k} X_i - \sum_{i=1}^{k}
    Z_i \biggr|
    = \smallo\bigl( (a_n( \log(\asipvar_n^2/a_n) + \log \log a_n))^{1/2}
    \bigr) \quad \text{$\prob$-a.s.}
  \]
  We have
  \[
    \sum_{i=1}^{k} X_i
    = \sum_{i=1}^{k} \tilde{\varphi}_i \circ T^{i}
    + \bigo((\log k)^2)
    = \sum_{i=1}^{k} \tilde{\varphi}_i \circ T^{i}
    + \bigo((\log \asipvar_k)^2)
  \]
  by the choice of $X_i$ and \eqref{eq:hn:log-growth}.
  Furthermore,
  \[
    \sum_{k=1}^{n} \EE[Z_k^2]
    = \asipvar_n^2 + \bigo(\asipvar_n (\log \asipvar_n)^2).
  \]
  by \eqref{eq:asip:X:var} and \eqref{eq:sigmahat:sigma}.
  Finally, as $a_n = \abstractvar_n^{2-\varepsilon}$,
  \[
    a_n\bigl( \log ( \abstractvar_n^2/a_n ) + \log \log a_n\bigr)^{1/2}
    \leqs \abstractvar_n^{1-\varepsilon/2} \log \abstractvar_n.
  \]
  By the choice of $\varepsilon$, the sequence
  $(\tilde{\varphi}_n \circ T^{n})$ therefore satisfies \eqref{asip}
  for any $\beta < \min \{\frac{1}{2}, 1 - \frac{1}{2\delta}\}$.
\end{proof}

\subsection{Proof of Theorem~\ref{thm:asip:non-invertible:targets}}
We now prove the secondary ASIP for non-invertible systems.
As in the proof of Theorem~\ref{thm:asip:hyperbolic:targets}, we
approximate the indicator functions $\charfun_{B_n}$ by H{\"o}lder
continuous functions $\varphi_n$. We then verify the conditions of
Theorem~\ref{thm:asip:abstract}, most of which has been done already in
Steps~\ref{step:asip:non-invertible:1} and
\ref{step:asip:non-invertible:2} of the proof of
Theorem~\ref{thm:asip:non-invertible:sup-growth}.
However, we approach Condition~\ref{eq:condition:B} more carefully as
is done in the proof of \cite[Theorem~5.1]{haydn-2017-almost}.
We include this part for completeness.
Recall that $\asipmeas_n = \mu(\varphi_n)$ and
$\asipmsum_n = \sum_{k=1}^{n} \asipmeas_k$.

\begin{proof}[Proof of Theorem~\ref{thm:asip:non-invertible:targets}]
  Notice first that
  $\asipvar_n^2 \geqs \asipmsum_n \geqs n^{1-\seqlow}$
  by Lemma~\ref{lem:var:shrinking-target:sigma}.
  As in the proof of Theorem~\ref{thm:asip:hyperbolic:targets}, we
  obtain H{\"o}lder continuous approximations
  $\varphi_k \colon X \to [0,1]$ such that
  \begin{gather}
  \nonumber
    \holconst{\varphi_k}{\alpha} \leqs k^{p}, \\
  \label{eq:asip:non-inv:targets:var}
    \asipvarA_n^2 = \asipvar_n^2 + \bigo(\log\asipvar_n), \\
  \label{eq:asip:non-inv:targets:sum}
    \biggl| \sum_{k=1}^{n} \tilde{\varphi}_k \circ T^{k}(x)
      - \sum_{k=1}^{n} \tilde{\charfun}_{B_k} \circ T^{k}(x) \biggr|
      \leqs_x 1 \quad \text{$\mu$-a.e.\ $x \in X$}
  \end{gather}
  (see \eqref{eq:cor:hyp:regularity}--\eqref{eq:approximate:BC}),
  where $p > 0$ and
  $\asipvarA_n^2 = \var( \sum_{k=1}^{n} \varphi_k \circ T^{k})$. Thus,
  we only need to verify the conditions in
  Theorem~\ref{thm:asip:abstract} for the sequence $(\varphi_k)$.

  Define $(h_n)$, $(\psi_n)$ and $(X_n)$ as in the proof of
  Theorem~\ref{thm:asip:non-invertible:sup-growth}, and
  let $\abstractvar_n^2 = \sum_{k=1}^{n} \EE[X_k^2]$.
  Since $\sup_{n \geq 1} \supnorm{\varphi_n} < \infty$, we now have
  \begin{align}
  \label{eq:hn:log-growth:2}
    \supnorm{h_n} &\leqs \log n, \\
  \nonumber
    \holnorm{h_n}{\alpha} &\leqs n^{p}.
  \end{align}
  Comparing \eqref{eq:hn:log-growth} to \eqref{eq:hn:log-growth:2},
  we see that there is a saving of a factor of $\log n$ throughout
  compared to proof of
  Theorem~\ref{thm:asip:non-invertible:sup-growth}.
  Let $\varepsilon \in (0,1)$, and define
  $a_n = \abstractvar_n^{2-\varepsilon}$. Similarly to
  \eqref{eq:sigmahat:sigma}, we have
  \begin{equation}
  \label{eq:test}
    \abstractvar_n^2 = \asipvarA_n^2
    + \bigo( \asipvarA_n \log \asipvarA_n),
  \end{equation}
  where we again have a saving of a factor of $\log n$.
  All the hypotheses apart from
  Condition~\ref{eq:condition:B} are established in the proof of
  Theorem~\ref{thm:asip:non-invertible:sup-growth}.

  We now verify Condition~\ref{eq:condition:B} for $\nu = 2$.
  Let $D > 0$ be a constant that is to be chosen sufficiently large.
  Recall that
  $X_n = \varphi_n \circ T^{n} - h_n \circ T^{n} + h_{n+1} \circ
  T^{n+1}$.
  By \eqref{eq:hn:log-growth:2},
  \begin{multline}
  \label{eq:estimate:Un:L4}
    \int_X X_n^{4} \diff\mu
    = \int_X (\tilde{\varphi}_n + \tilde{h}_n - \tilde{h}_{n+1} \circ
    T)^{4} \diff\mu \\
    \leqs (\log n)^{3} \lnorm{\tilde{\varphi}_n}{1}
    + \lnorm{\tilde{h}_n - \tilde{h}_{n+1} \circ T}{4}^{4}.
  \end{multline}
  As established in the proof of
  Theorem~\ref{thm:asip:non-invertible:sup-growth}
  (see \eqref{eq:asip:1:h:sup}),
  \[
    | \tilde{h}_n | \leqs \sum_{k=1}^{\beta_n}
    |P^{k} \tilde{\varphi}_{n-k}| + n^{p} e^{-\spgap \beta_n}.
  \]
  Selecting $D$ sufficiently large, we have
  $n^{p} e^{-\spgap \beta_n} = n^{p - \spgap D} \leq n^{-2}$.
  Therefore,
  \begin{equation}
  \label{eq:hn:l4}
    \lnorm{\tilde{h}_n}{4} \leqs \sum_{k=1}^{\beta_n}
    \lnorm{\tilde{\varphi}_{n-k}}{4} + n^{-2}.
  \end{equation}
  Now, for $1 \leq k \leq \beta_n$ we have
  \[
    \lnorm{\tilde{\varphi}_{n-k}}{4}
    \leqs \asipmeas_{n-k}^{1/4} \bigl(\log(n-k)\bigr)^{3/4}
    \leqs \asipmeas_{n-\beta_n}^{1/4} (\log n)^{3/4}
  \]
  by the monotonicity of $(\asipmeas_n)$.
  Using \eqref{eq:hn:l4}, we have
  \[
    \lnorm{\tilde{h}_n}{4}
    \leqs \beta_n \asipmeas_{n-\beta_n}^{1/4} (\log n)^{3/4}
    \leqs \asipmeas_{n-\beta_n}^{1/4} (\log n)^2.
  \]
  Therefore,
  \[
    \int_X X_n^{4} \diff\mu \leqs \asipmeas_{n-\beta_n} (\log n)^{8}
  \]
  by \eqref{eq:estimate:Un:L4}.

  We also have
  $a_n^2 = \abstractvar_n^{4-2\varepsilon}
  \geqs \abstractvar_n^{4-2\varepsilon}$
  by \eqref{eq:test}.
  Since $\asipvar \geqs \asipmsum_n$, we have
  $\abstractvar_n^2 \geqs \asipmsum_n$ by
  \eqref{eq:asip:non-inv:targets:var} and \eqref{eq:test}. Thus,
  \begin{equation}
  \label{eq:h2}
    \sum_{k=1}^{n} a_k^{2} \mu(X_k^{4})
    \leqs \sum_{k=1}^{n}\frac{\asipmeas_{k-\beta_k} (\log k)^{8}}{
      \asipmsum_k^{2-\varepsilon}}
    \leqs \sum_{k=1}^{n}\frac{\asipmeas_{k-\beta_k} (\log k)^{8}}{
      \asipmsum_{k-\beta_k}^{2-\varepsilon}}.
  \end{equation}
  Now
  \[
    \frac{\asipmsum_n^{2-\varepsilon}}{\asipmeas_n}
    \geq \biggl( \sum_{k=1}^{n} \asipmeas_k^{1-\frac{1}{2-\varepsilon}}
    \biggr)^{2-\varepsilon}
    \geqs \biggl( \sum_{k=1}^{n} k^{-\seqlow(1-\frac{1}{2-\varepsilon})}
    \biggr)^{2-\varepsilon}
    \geqs n^{2-\varepsilon - \seqlow(1-\varepsilon)}
    = n^{1+\varepsilon'},
  \]
  where $\varepsilon' = (1-\varepsilon)(1-\seqlow) > 0$ since
  $\varepsilon,\seqlow < 1$. Therefore, by \eqref{eq:h2},
  \[
    \sum_{k=1}^{n} a_k^{2} \mu(X_k^{4})
    \leqs \sum_{k=1}^{n}
    \frac{(\log k)^{4}}{(k-\beta_k)^{1+\varepsilon'}}
    \leqs \sum_{k=1}^{n}
    \frac{(\log k)^{4}}{k^{1+\varepsilon'}},
  \]
  which converges as $n \to \infty$ as required by
  Condition~\ref{eq:condition:B}.

  In conclusion, there exists a sequence of independent centred
  Gaussian variables $(Z_k)$ (on an enlarged space $(\Omega,\prob)$ if
  necessary) such that $\EE[Z_k^2] = \EE[X_k^2]$ and
  \begin{equation}
  \label{eq:asip:non-inv:targets:conclusion:1}
    \sup_{1 \leq k \leq n} \biggl| \sum_{i=1}^{k} X_i
      - \sum_{i=1}^{k} Z_i \biggr|
    = \smallo\bigl( (a_n( \log(\abstractvar_n^2/a_n)
      + \log \log a_n))^{1/2} \bigr) \quad \text{$\prob$-a.s.}
  \end{equation}
  We have
  \begin{equation}
  \label{eq:asip:non-inv:targets:conclusion:2}
    \sum_{i=1}^{k} X_i(x)
    = \sum_{i=1}^{k} \tilde{\varphi}_i \circ T^{i}(x)
    + \bigo(\log k)
    = \sum_{i=1}^{k} \tilde{\charfun}_i \circ T^{i}(x)
    + \bigo_x(\log \asipvar_k)
  \end{equation}
  by definition of $X_i$ and \eqref{eq:asip:non-inv:targets:sum}.
  Furthermore,
  \begin{equation}
  \label{eq:asip:non-inv:targets:conclusion:3}
    \sum_{k=1}^{n} \EE[Z_k^2]
    = \abstractvar_n^2
    = \asipvar_n^2 + \bigo(\asipvar_n \log \asipvar_n)
  \end{equation}
  by \eqref{eq:asip:non-inv:targets:var} and \eqref{eq:test}.
  Finally, as $a_n = \abstractvar_n^{2-\varepsilon}$, we have
  \begin{equation}
  \label{eq:asip:non-inv:targets:conclusion:4}
    \bigl(a_n( \log( \abstractvar_n^2/a_n ) + \log\log a_n)\bigr)^{1/2}
    \leqs \abstractvar_n^{1-\varepsilon/2} \log \abstractvar_n
    \leqs \asipvar_n^{1-\varepsilon/2} \log \asipvar_n,
  \end{equation}
  again, by \eqref{eq:asip:non-inv:targets:var} and \eqref{eq:test}.
  Since we are free to choose $\varepsilon \in (0,1)$,
  the estimates
  \eqref{eq:asip:non-inv:targets:conclusion:1}%
  --\eqref{eq:asip:non-inv:targets:conclusion:4}
  therefore imply that \eqref{asip} holds for any $\beta < 1/2$.
\end{proof}

\section{Proof of variance estimates for Section~\ref{sec:clt:alt}}
\label{sec:proof:var:explicit}
We now prove the correlation and variance results stated in
\text{\S}\ref{sec:var:explicit}.

\subsection{Proof of Lemma~\ref{lem:r-explicit:correlations}}
The proof is a simple application of
Lemmas~\ref{lem:construction} and \ref{lem:correlation}.
We prove only the upper bound since the lower bound follows in a
similar manner.

\begin{proof}[Proof of Lemma~\ref{lem:r-explicit:correlations}]
  Let $p$ be the H{\"o}lder conjugate of $q > 1$.
  By the hypothesis~\ref{item:r-explicit:correlations:2}, namely that
  $h \in L^{q}(\leb)$,
  Condition~\ref{cond:frostman} is satisfied with $\frost = \dim X/p$,
  and \ref{cond:thin-annuli} is satisfied with $\thinan = 1$ and for
  some $\thinanr$.
  Since $r_k \to 0$, we may without loss of generality assume that
  $r_k \leq \thinanr$ for all $k$.
  For convenience, we write $\frostan = \min \{1,\dim X/p\}$ and
  $d_0 = \dim X$.

  Let $\kappa > 0$ be a constant to be determined,
  and let $\varepsilon_n = n^{-\kappa}$.
  By applying Lemma~\ref{lem:construction} to $\abr = r_k$ 
  and $\varepsilon = \varepsilon_n$, we obtain $\alpha$-H{\"o}lder
  continuous functions $g_{n,k} \colon X \times X \to [0,1]$ that
  satisfy
  \begin{equation}
  \label{eq:lem:cor:explicit:1}
    \mu(\exE_k \cap \exE_j)
    \leq \int_X g_{n,k}(x,T^{k}x) g_{n,j}(x,T^{j}x) \diff\mu(x)
    \quad (0 < k < j \leq n)
  \end{equation}
  and
  \begin{equation}
  \label{eq:lem:cor:explicit:2}
    \int_X \bigl\lvert g_{n,k}(x,y) - \mu( B(x,r_k) ) \bigr\rvert
      \diff\mu(y) 
    \leqs \varepsilon_n^{\frostan}
    \leqs n^{-\kappa \frostan}
    \quad  (k \leq n)
  \end{equation}
  uniformly in $x \in \supp\mu$.

  By part~\ref{item:lem:correlation:2} of Lemma~\ref{lem:correlation}
  with $p = \kappa$,
  \begin{multline}
  \label{eq:lem:cor:explicit:3}
    \int_X g_{n,k}(x,T^{k}x) g_{n,j}(x,T^{j}x) \diff\mu(x) \\
    \leq \iint g_{n,k}(y,T^{k}x) g_{n,j}(y,T^{j}x)
      \diff\mu(x) \diff\mu(y) \\
    + \bigo \bigl( n^{-\kappa} + n^{4(d_0+3)\kappa/\alpha}
      e^{-\mdoc k} \bigr)
  \end{multline}
  Now, for every $y \in X$, apply part~\ref{item:lem:correlation:1} of
  Lemma~\ref{lem:correlation} to the observables
  $g(x,z) = g_{n,k}(y,x) g_{n,j}(y,z)$. This yields
  \begin{multline}
  \label{eq:lem:cor:explicit:4}
    \int_X g_{n,k}(y,T^{k}x) g_{n,j}(y,T^{j}x) \diff\mu(x) \\
    \leq \iint g_{n,k}(y,x) g_{n,j}(y,z)
      \diff\mu(x) \diff\mu(z) \\
    + \bigo \bigl( n^{-\kappa} + n^{4(d_0+3)\kappa/\alpha}
      e^{-\doc (j-k)} \bigr)
  \end{multline}
  uniformly in $y$.

  Combining
  \eqref{eq:lem:cor:explicit:1}--\eqref{eq:lem:cor:explicit:4}, yields
  \begin{multline*}
    \mu(\exE_k \cap \exE_j)
    \leq \int_X \mu(B(y,r_k)) \mu(B(y,r_j)) \diff\mu(y) \\
      + \bigo\bigl( n^{-\kappa\frostan} + n^{-\kappa}
      + n^{4(d_0+3)\kappa/\alpha} (e^{-\mdoc k} + e^{-\doc (j-k)})
    \bigr).
  \end{multline*}
  By choosing $\kappa > \max \{ 2, 2/\frostan \}$ and
  \[
    \lexcorsum = \min \{\kappa, \kappa \frostan \} > 2,
    \quad
    \lexcorcoeff = \frac{4(d_0+3)\kappa}{\alpha},
    \quad
    \lexcor = \min \{ \doc, \mdoc \},
  \]
  we obtain the upper bound.
  The lower bound is established in a similar way.
\end{proof}

\subsection{Proof of Lemma~\ref{lem:r-explicit:liminf}}
The proof is divided into three main steps. The first step is to
introduce a shift in time in the variance $\exsig_n^2$, which will
allow us to obtain better decorrelation estimates.
We then write the variance as two sums, and estimate each sum
separately, which comprise the second and third steps.

\begin{proof}[Proof of Lemma~\ref{lem:r-explicit:liminf}]
  We begin with some notation and a simple reduction, which will assure
  that the assumptions of Lemma~\ref{lem:construction} are satisfied.
  Let
  \[
    S_n(x) = \sum_{k=1}^{n} \charfun_{B(x,r_k)}(T^{k}x) -
    \mu(B(x,r_k)).
  \]
  Then we can write $\exsig_n^2 = \var(S_n)$.
  Let $p$ be the H{\"o}lder conjugate of $q$.
  By the hypothesis~\ref{item:r-explicit:liminf:2}, namely that
  $h \in L^{q}(\leb)$ for some $q > 1$,
  Condition~\ref{cond:frostman} is satisfied with $\frost = \dim X/p$,
  and \ref{cond:thin-annuli} is satisfied with $\thinan = 1$ and for
  some $\thinanr$.
  For convenience, we write $\frostan = \min \{1,\dim X/p\}$ and
  $d_0 = \dim X$.
  By Lemma~\ref{lem:wlog}, we may assume without loss of generality
  that $r_k \leq \thinanr$ for all $k$.

  \step[shifting time]
  We shift time in order to effectively use decay of correlations. 
  Let $D > 0$ be a constant to be specified later, and write
  $\beta_n = D \log n$. Write
  \[
    \exsig_{n,\beta_n}^2 = \int_X \biggl( \sum_{k=1}^{n}
    \charfun_{B(x,r_k)}(T^{k}x) - \mu(B(x,r_k)) \biggr)^2 \diff\mu(x)
  \]
  and notice that
  \[
    \sum_{k=1}^{n} \charfun_{B(x,r_k)}(T^{k}x) - \mu(B(x,r_k))
    = S_n(x) + \bigo(\beta_n).
  \]
  By squaring and integrating both sides of the equation, we have
  \begin{equation}
  \label{eq:var:explicit:shift}
    | \exsig_{n,\beta_n}^2 - \exsig_n^2|
    \leqs \beta_n \exsig_{n,\beta_n} + \beta_n^2.
  \end{equation}
  It therefore suffices to show that
  \begin{equation}
  \label{eq:var:explicit:shift:liminf}
    \liminf_{n \to \infty} \frac{\exsig_{n,\beta_n}^2}{\exesum_n}
    \geq 1
  \end{equation}
  since it will imply $\exsig_{n,\beta_n}^2 \sim \exsig_n^2$
  by \eqref{eq:var:explicit:shift}, and therefore the main part of the
  lemma.

  Expanding the expression for $\exsig_{n,\beta_n}^2$, we have
  \[
    \exsig_{n,\beta_n}^2 = \sum_{k=1}^{n} A_{k+\beta_n}
    + 2\sum_{k=1}^{n-1}\sum_{j=k+1}^{n} B_{k+\beta_n,j+\beta_n},
  \]
  where
  \[
    A_k = \mu(\exE_k)
    - 2 \int_X \charfun_{B(x,r_k)}(T^{k}x) \mu(B(x,r_k)) \diff\mu(x)
    + \int_X \mu(B(x,r_k))^2 \diff\mu(x)
  \]
  and
  \begin{multline*}
    B_{k,j} = \mu(\exE_k \cap \exE_j)
    - \int_X \charfun_{B(x,r_k)}(T^{k}x) \mu(B(x,r_j)) \diff\mu(x) \\
    - \int_X \charfun_{B(x,r_j)}(T^{j}x) \mu(B(x,r_k)) \diff\mu(x) \\
    + \int_X \mu(B(x,r_k)) \mu(B(x,r_j)) \diff\mu(x).
  \end{multline*}

  \step[estimating the leading sum]
  \label{step:r-explicit:liminf:1}
  We first estimate the sum involving $A_{k+\beta_n}$. Let
  $\kappa > 0$ be a constant to be specified later, and let
  $\varepsilon_n = n^{-\kappa}$.
  By applying Lemma~\ref{lem:construction} to $\abr = r_k$ and
  $\varepsilon = \varepsilon_n$, we obtain $\alpha$-H{\"o}lder
  continuous functions $g_{n,k} \colon X \times X \to [0,1]$ that
  satisfy
  \begin{multline}
  \label{eq:var:explicit:upper}
    \int_X \charfun_{B(x,r_k)}(T^{k}x) \mu(B(x,r_k)) \diff\mu(x) \\
    \leq \iint g_{n,k}(x,T^{k}x) g_{n,k}(x,y) \diff\mu(y)
      \diff\mu(x)
    \quad (k \leq n)
  \end{multline}
  and
  \begin{equation}
  \label{eq:var:explicit:approx}
    \int_X \bigl| g_{n,k}(x,y) - \mu(B(x,r_k)) \bigr| \diff\mu(y)
    \leqs \varepsilon_n^{\frostan} \leqs n^{-\kappa\frostan}
    \quad (k \leq n)
  \end{equation}
  uniformly for $x \in \supp\mu$.
  Applying part~\ref{item:lem:correlation:1} of
  Lemma~\ref{lem:correlation} with $p = \kappa$ to the functions
  $g(x,z) = g_{n,k}(x,z) \int g_{n,k}(x,y) \diff\mu(y)$, we have
  \begin{multline*}
    \iint g_{n,k}(x,T^{k}x) g_{n,k}(x,y) \diff\mu(y) \diff\mu(x)
    \\
    \leq \iiint g_{n,k}(x,z) g_{n,k}(x,y) \diff\mu(x)
      \diff\mu(y) \diff\mu(z) \\
    \quad + \bigo\bigl(n^{-\kappa} + n^{4(d_0+3)\kappa/\alpha}
      e^{-\doc k} \bigr).
  \end{multline*}
  By \eqref{eq:var:explicit:approx}, we have
  \begin{multline*}
    \iiint g_{n,k}(x,z) g_{n,k}(x,y) \diff\mu(y)
      \diff\mu(z) \diff\mu(x) \\
    \leq \int_X \mu(B(x,r_k))^2 \diff\mu(x)
      + \bigo( n^{-\kappa\frostan}),
  \end{multline*}
  which implies, by \eqref{eq:var:explicit:upper},
  \begin{multline*}
    \int_X \charfun_{B(x,r_k)}(T^{k}x) \mu(B(x,r_k)) \diff\mu(x) \\
    \leq \int_X \mu(B(x,r_k))^2 \diff\mu(x) \\
      + \bigo\bigl( n^{-\kappa \frostan} + n^{-\kappa}
      + n^{4(d_0+3)\kappa/\alpha} e^{-\doc k} \bigr).
  \end{multline*}
  Hence,
  \begin{multline*}
    \sum_{k=1}^{n} A_{k+\beta_n}
    \geq \sum_{k=1}^{n} \mu(\exE_{k+\beta_n})
      - \int_X \mu(B(x,r_{k+\beta_n}))^2 \diff\mu(x) \\
    + \bigo\bigl( n^{1-\kappa\frostan} + n^{1-\kappa}
      + n^{4(d_0+3)\kappa/\alpha} e^{-\doc \beta_n}\bigr).
  \end{multline*}
  Selecting $\kappa > \max \{1, 1/\frostan\}$, and then $D$ in
  $\beta_n = D \log n$ sufficiently large, we have that the error
  converges to $0$ as $n \to \infty$.
  By shifting time again, we therefore have
  \[
    \sum_{k=1}^{n} A_{k+\beta_n}
    \geq \sum_{k=1}^{n} \biggl( \mu(\exE_k) - \int_X \mu(B(x,r_k))^2
      \diff\mu(x) \biggr) - \beta_n + \smallo(1).
  \]

  Now, recall that $\mu(B(x,r)) \leqs r^{d_0/p}$.
  Thus,
  \[
    \int_X \mu(B(x,r_k))^2 \diff\mu(x)
    \leq r_k^{d_0/p} \int_X \mu(B(x,r_k)) \diff\mu(x)
    = \smallo( \mu(\exE_k) ),
  \]
  where we have employed $r_k \to 0$ and
  Lemma~\ref{lem:r-explicit:meas} in the last inequality.
  Applying this estimate to the bound above, we have
  \[
    \sum_{k=1}^{n} A_{k+\beta_n}
    \geq \sum_{k=1}^{n} \bigl( \mu(\exE_k) - \smallo(\exE_k) \bigr)
    - \beta_n + \smallo(1).
  \]
  By Lemma~\ref{lem:r-explicit:meas} and the
  hypothesis~\ref{item:r-explicit:liminf:3}, we have
  \[
    \mu(\exE_k) \geq \int_X \mu(B(x,r_k)) \diff\mu(x) - e^{-\lexmeas k}
    \geqs r_k^{d_0} - e^{-\lexmeas k}
    \geqs k^{-\seqlow d_0},
  \]
  where we recall that $\seqlow \in (0,1/d_0)$.
  In particular,
  \begin{equation}
  \label{eq:var:explicit:liminf:esum}
    \exesum_n = \sum_{k=1}^{n} \mu(\exE_k) \geqs n^{1-\seqlow d_0}
  \end{equation}
  and $\beta_n = \smallo(\exesum_n)$. Therefore,
  \begin{equation}
  \label{eq:var:explicit:sum:1}
    \sum_{k=1}^{n} A_{k+\beta_n}
    \geq \exesum_n - \smallo( \exesum_n),
  \end{equation}

  \step[estimating the double sum]
  We now estimate the double sum involving $B_{k+\beta_n,j+\beta_n}$.
  We consider the cases $j \leq k + \beta_n$ and $j > k + \beta_n$
  separately.

  For $j \leq k + \beta_n$, we have
  \[
    \int_X \charfun_{B(x,r_k)}(T^{k}x) \mu(B(x,r_j)) \diff\mu(x)
    \leqs r_j^{d_0/p} \mu(\exE_k) \leqs r_k^{d_0/p} \mu(\exE_k),
  \]
  where the second inequality follows by the monotonicity of $(r_k)$.
  Similarly, 
  \[
    \int_X \charfun_{B(x,r_j)}(T^{j}x) \mu(B(x,r_k)) \diff\mu(x)
    \leqs r_k^{d_0/p} \mu(\exE_k),
  \]
  as the monotonicity of $(r_k)$ transfers to monotonicity up to a
  constant coefficient of $(\mu(\exE_k))$ by
  Lemma~\ref{lem:r-explicit:meas}. Hence,
  \[
    \sum_{k=1}^{n-1} \sum_{j=k+1}^{k+\beta_n} B_{k+\beta_n, j+\beta_n}
    \geqs - \beta_n \sum_{k=1}^{n-1} r_{k+\beta_n}^{d_0/p}
      \mu(\exE_{k+\beta_n})
    \geqs -\beta_n^2 - \beta_n \sum_{k=1}^{n-1} r_k^{d_0/p}
      \mu(\exE_k),
  \]
  where we shift time back in the last inequality.
  Now let $s \in (0,1-\seqlow d_0)$.
  Then
  \[
    \beta_n \sum_{k=1}^{n-1} r_k^{d_0/p} \mu(\exE_k)
    \leqs \beta_n n^{s} + \sum_{k=n^{s}}^{n-1} r_k^{d_0/p} \mu(\exE_k)
      \log k
    = \smallo(\exesum_n),
  \]
  where we have used that $\exesum_n \geqs n^{1-\seqlow d_0}$ (see
  \eqref{eq:var:explicit:liminf:esum}), and $r_k \to 0$ and
  $r_k \leqs (\log k)^{-\seqhigh}$ for some $\seqhigh > p/d_0$
  (hypothesis~\ref{item:r-explicit:correlations:3}).
  The two previous equations therefore imply
  \begin{equation}
  \label{eq:var:explicit:sum:2:short}
    \sum_{k=1}^{n-1} \sum_{j=k+1}^{k+\beta_n} B_{k+\beta_n, j+\beta_n}
    \geq - \smallo(\exesum_n).
  \end{equation}

  For $j > k + \beta_n$, a similar argument to the one used in
  Step~\ref{step:r-explicit:liminf:1} can be used to equate
  $\sum_{k=1}^{n} \sum_{j=k+\beta_n+1}^{n} B_{k+\beta_n,j+\beta_n}$
  with
  \begin{multline}
  \label{eq:var:explicit:sum:2:1}
    \sum_{k=1}^{n-1} \sum_{j=k+\beta_n+1}^{n} \Bigl(
      \mu(\exE_{k+\beta_n} \cap \exE_{k+\beta_n})
      - \int_X \mu(B(x,r_{k+\beta_n})) \mu(B(x,r_{j+\beta_n}))
      \diff\mu(x) \\
    + \bigo\bigl( n^{-\kappa'} + n^{4(d_0+3)\kappa'}
      e^{-\mdoc (k+\beta_n)} \bigr) \Bigr),
  \end{multline}
  where $\kappa' > 0$ is a constant to be specified.
  By Lemma~\ref{lem:r-explicit:correlations}, there exist
  $\lexcorsum > 2$ and $\lexcorcoeff, \lexcor > 0$ such that, for all
  $k < j \leq n$,
  \begin{multline*}
    \Bigl| \mu(\exE_{k+\beta_n} \cap \exE_{k+\beta_n})
      - \int_X \mu(B(x,r_{k+\beta_n})) \mu(B(x,r_{j+\beta_n}))
      \diff\mu(x) \Bigr| \\
    \leqs n^{-\lexcorsum} + n^{\lexcorcoeff}
      \bigl( e^{-\lexcor (k+\beta_n)} + e^{-\lexcor(j-k)} \bigr).
  \end{multline*}
  In view of \eqref{eq:var:explicit:sum:2:1}, we therefore have
  \[
  \begin{split}
    \sum_{k=1}^{n-1} \sum_{j=k+\beta_n+1}^{n} |B_{k+\beta_n,j+\beta_n}|
    &\leqs \sum_{k=1}^{n-1} \sum_{j=k+\beta_n+1}^{n}
      \bigl( n^{-p} + n^{q} \bigl( e^{-\eta'(k+\beta_n)}
      + e^{-\eta'(j-k)} \bigr) \bigr) \\
    &\leqs n^{2-p} + n^{q} \sum_{k=1}^{n-1} \bigl(
      e^{-\eta'(k+\beta_n)} + e^{-\eta'\beta_n} \bigr) \\
    &\leqs n^{2-p} + n^{q+1} e^{-\eta'\beta_n},
  \end{split}
  \]
  for $p = \min \{\lexcorsum, \kappa'\}$, and some $\eta',q > 0$.
  By choosing $\kappa' > 2$, we have $p > 2$. In turn, choosing $D > 0$
  sufficiently large in $\beta_n = D \log n$ yields
  \begin{equation}
  \label{eq:var:explicit:sum:2:long}
    \sum_{k=1}^{n-1} \sum_{j=k+\beta_n+1}^{n} B_{k+\beta_n,j+\beta_n}
    = \smallo(1).
  \end{equation}

  In view of \eqref{eq:var:explicit:sum:2:short} and
  \eqref{eq:var:explicit:sum:2:long}, we have
  \begin{equation}
  \label{eq:var:explicit:sum:2}
    \sum_{k=1}^{n-1}\sum_{j=k+1}^{n} B_{k+\beta_n, j+\beta_n}
    \geq - \smallo(\exesum_n).
  \end{equation}

  \conclusion
  Combining \eqref{eq:var:explicit:sum:1} and
  \eqref{eq:var:explicit:sum:2}, we have
  \[
    \exsig_{n,\beta_n}^2
    \geq \exesum_n - \smallo( \exesum_n ),
  \]
  which implies \eqref{eq:var:explicit:shift:liminf}.
  We have thus established the main part of the lemma.
  By \eqref{eq:var:explicit:liminf:esum}, we infer that
  $\sigma_n^2 \geqs n^{1-\seqlow d_0}$.
\end{proof}

\subsection{Proof of Proposition~\ref{prop:r-explicit:L1}}
We will first state and prove a lemma that converts
Condition~\ref{cond:sr:explicit} to an estimate involving the sets
$\exE_k$.

\begin{lemma}
  \label{lem:r-explicit:sr}
  Assume that Condition~\ref{cond:sr:explicit} holds with the constant
  $\srelog > 1$. Then there exists $\sremin > 0$ such that
  \[
    \mu( \exE_k \cap \exE_{k+l} )
    \leqs \mu(\exE_k)^{1+\sremin}
  \]
  for all $k \geq 1$ and $1 \leq l \leq (\log k)^{\srelog}$.
\end{lemma}

\begin{proof}
  Recall from the proof of Lemma~\ref{lem:r-explicit:liminf} that
  Conditions~\ref{cond:frostman} and \ref{cond:thin-annuli} are
  satisfied with $\frost = \dim X/p$ and $\thinan = 1$, where $p$ is the
  H{\"o}lder conjugate of $q$. We may also assume without loss of
  generality that $r_k \leq \thinanr$ for all $k$.
  For convenience, we write $\frostan = \min \{1,\dim X/p\}$ and
  $d_0 = \dim X$.

  Let $\kappa > 0$ be a constant to be specified later, and let
  $\varepsilon_k = k^{-\kappa}$.
  By applying Lemma~\ref{lem:construction} to $\abr = r_k$ and
  $\varepsilon = \varepsilon_k$, we obtain $\alpha$-H{\"o}lder
  continuous functions $g_k \colon X \times X \to [0,1]$
  that satisfy
  \begin{equation}
  \label{eq:var:explicit:goal:1:upper}
    \mu(\exE_k \cap \exE_{k+l})
    \leq \int_X g_k(x,T^{k}x) g_{k+l}(x,T^{k+l}x) \diff\mu(x)
    \quad (k,l \geq 1)
  \end{equation}
  and
  \begin{equation}
  \label{eq:var:explicit:goal:1:measure}
    \int_X \bigl| g_k(x,y) - \charfun_{B(x,r_k)}(y) \bigr| \diff\mu(y)
    \leqs \varepsilon_k^{\frostan} \leqs k^{-\kappa \frostan}
    \quad (k \geq 1)
  \end{equation}
  uniformly in $x \in \supp\mu$. Using
  part~\ref{item:lem:correlation:2} of Lemma~\ref{lem:correlation} with
  $p = \kappa$ and $n = k + l \leqs k$, we obtain
  \begin{multline*}
    \int_X g_k(x,T^{k}x) g_{k+l}(x,T^{k+l}x) \diff\mu(x)
    = \iint g_k(y,x) g_{k+l}(y,T^{l}x) \diff\mu(x) \diff\mu(y) \\
    + \bigo(k^{-\kappa} + k^{4(d_0+3)\kappa/\alpha} e^{-\mdoc k} ).
  \end{multline*}
  By \eqref{eq:var:explicit:goal:1:measure},
  \[
    \int_X \bigl| g_k(y,x) g_{k+l}(y,T^{l}x) - \charfun_{B(y,r_k)}(x)
    \charfun_{B(y,r_{k+l})}(T^{l}x) \bigr| \diff\mu(x)
    \leqs k^{-\kappa\frostan}
  \]
  uniformly in $y \in \supp\mu$. In view of
  \eqref{eq:var:explicit:goal:1:upper}, we therefore have
  \begin{multline*}
    \mu(\exE_k \cap \exE_{k+l})
    \leq \int_X \mu\bigl(B(y,r_k) \cap T^{-l} B(y,r_{k+l}) \bigr)
      \diff\mu(y) \\
    + \bigo( k^{-\kappa \frostan} + k^{-\kappa}
    + k^{4(d+3)\kappa/\alpha} e^{-\mdoc k} ).
  \end{multline*}

  Let
  $A(k,l) = \{y \in X : \mu(B(y,r_k) \cap T^{-l} B(y,r_k)) >
  \exAv_k^{1+\srein}\}$,
  where we recall that $\exAv_k = \int \mu(B(x,r_k)) \diff\mu(x)$.
  We now use Condition~\ref{cond:sr:explicit}. For all $k \geq 1$ and
  $1 \leq l \leq (\log k)^{\srelog}$, we have
  \begin{equation}
  \label{eq:var:explicit:l1:1:sr}
  \begin{split}
    \int_X \mu\bigl(B(y,r_k) \cap T^{-l} B(y,r_{k+l}) \bigr)
      \diff\mu(y)
    &\leq \exAv_k^{1+\srein} + \int_{A(k,l)} \mu(B(y,r_k))
      \diff\mu(y) \\
    &\leq \exAv_k^{1+\srein} + \mu(A(k,l))\exAv_k \\
    &\leq \exAv_k^{1+\srein} + \exAv_k^{1+\sreout} \\
    &\leqs \exAv_k^{1+\sremin},
  \end{split}
  \end{equation}
  where $\sremin = \min \{\srein, \sreout\}$.
  Therefore,
  \[
    \mu(\exE_k \cap \exE_{k+l})
    \leqs \exAv_k^{1+\sremin}  + k^{-\kappa \frostan} + k^{-\kappa}
      + k^{4(d+3)\kappa/\alpha} e^{-\mdoc k}.
  \]

  By the hypotheses~\ref{item:r-explicit:L1:2} and
  \ref{item:r-explicit:L1:4}, we have
  $\exAv_k \geq r_k^{\dim X} \geqs k^{-\seqlow \dim X}$.
  Therefore, by choosing $\kappa > 0$ sufficiently large, we have
  \[
    \mu(\exE_k \cap \exE_{k+l}) \leqs \exAv_k^{1+\sremin}.
  \]
  By Lemma~\ref{lem:r-explicit:meas}, we have
  $\exAv_k \leqs \mu(\exE_k)$, and we may conclude the proof.
\end{proof}

We now proceed with the proof of Proposition~\ref{prop:r-explicit:L1}.
We will use the general probabilistic result of
Proposition~\ref{prop:kallenberg}.

\begin{proof}[Proof of Proposition~\ref{prop:r-explicit:L1}]
  We divide the proof into three steps. First, we show convergence of
  the average of $\exs_n^2/\exsig_n^2$
  (Step~\ref{step:prop:r-explicit:1}), which follows mostly
  from the proof of Lemma~\ref{lem:r-explicit:liminf} together with an
  application of Lemma~\ref{lem:r-explicit:sr}.
  Then, we establish a pointwise asymptotic formula for
  $\sum_{k=1}^{n}\mu(B(y,r_k))$ (Step~\ref{step:prop:r-explicit:2})
  using the Hardy--Littlewood maximal function,
  which will then be used in establishing convergence in measure of
  $\exs_n^2/\exsig_n^2$ (Step~\ref{step:prop:r-explicit:3}) using
  Markov's inequality in the process.
  We then conclude by Proposition~\ref{prop:kallenberg}.

  As before, Conditions~\ref{cond:frostman} and \ref{cond:thin-annuli}
  are satisfied with $\frost = \dim X/p$ and $\thinan = 1$, where $p$ is
  the H{\"o}lder conjugate of $q$.
  By Lemma~\ref{lem:wlog}, we may assume without loss of generality
  that $r_k \leq \thinanr$ for all $k$.
  For convenience, we write $\frostan = \min \{1,\dim X/p\}$,
  $d_0 = \dim X$,
  $\exAv_k = \int \mu(B(x,r_k)) \diff\mu(x)$,
  \[
    \exesum_n = \sum_{k=1}^{n} \mu(\exE_k)
    \quad \text{and} \quad
    \exmsum_n = \sum_{k=1}^{n} \exAv_k.
  \]
  In Steps~\ref{step:prop:r-explicit:1}
  and \ref{step:prop:r-explicit:3}, we will consider short returns, and
  so we introduce $\beta_n = D \log n$, where $D > 0$ is a constant to
  be specified later.

  \step[Convergence of the average]
  \label{step:prop:r-explicit:1}
  We first prove
  \begin{equation}
  \label{eq:var:explicit:l1:goal:2}
    \lim_{n \to \infty} \int_X \frac{\exs_n^2(y)}{\exsig_n^2}
    \diff\mu(y) = 1.
  \end{equation}
  The limit follows from the following two:
  \begin{equation}
  \label{eq:var:explicit:sigma:limit}
    \lim_{n \to \infty} \frac{\exsig_n^2}{\exesum_n} = 1
  \end{equation}
  and
  \begin{equation}
  \label{eq:var:explicit:s:limit}
    \lim_{n \to \infty} \int_X \frac{\exs_n^2(y)}{\exmsum_n}
    \diff\mu(y) = 1.
  \end{equation}
  Indeed,
  \[
    \lim_{n \to \infty} \int_X \frac{\exs_n^2(y)}{\exsig_n^2}
    \diff\mu(y)
    = \lim_{n \to \infty} \frac{\exesum_n}{\exsig_n^2}
    \cdot \lim_{n \to \infty} \frac{\exmsum_n}{\exesum_n} \cdot
    \lim_{n \to \infty} \int_X \frac{\exs_n^2(y)}{\exmsum_n}
    \diff\mu(y) = 1,
  \]
  where we have used that $\exesum_n \sim \exmsum_n$
  by Lemma~\ref{lem:r-explicit:meas}.
  We choose to only show \eqref{eq:var:explicit:sigma:limit} since
  \eqref{eq:var:explicit:s:limit} follows from a similar argument. By
  Lemma~\ref{lem:r-explicit:liminf}, it remains to show
  \[
    \limsup_{n \to \infty} \frac{\exsig_n^2}{\exesum_n} \leq 1.
  \]
  By \eqref{eq:var:explicit:sum:2:long}, it in turn remains to show
  \begin{equation}
  \label{eq:var:explicit:goal:2:main}
    \sum_{k=1}^{n} \mu(\exE_k) + \int_X \mu(B(x,r_k))^2 \diff\mu(x)
    \leq \exesum_n + \smallo(\exesum_n)
  \end{equation}
  and
  \begin{equation}
  \label{eq:var:explicit:goal:2:short}
    \sum_{k=1}^{n-1} \sum_{j=k+1}^{k+\beta_n}
    \mu(\exE_k \cap \exE_j) +
    \int_X \mu(B(x,r_k)) \mu(B(x,r_j)) \diff\mu(x)
    = \smallo(\exesum_n).
  \end{equation}

  To see \eqref{eq:var:explicit:goal:2:main}, notice that
  \[
    \sum_{k=1}^{n} \int_X \mu(B(x,r_k))^2 \diff\mu(x)
    \leqs \sum_{k=1}^{n} \exAv_k r_k^{d_0/p}
    \leqs \sum_{k=1}^{n} \mu(\exE_k) r_k^{d_0/p}
    = \smallo(\exesum_n)
  \]
  by Lemma~\ref{lem:r-explicit:meas} and the hypothesis that
  $r_k \to 0$.

  To see \eqref{eq:var:explicit:goal:2:short}, notice that,
  by Lemmas~\ref{lem:r-explicit:meas} and \ref{lem:r-explicit:sr},
  \[
    \mu( \exE_k \cap \exE_j )
    \leqs \mu(\exE_k)^{1+\sremin}
    \leqs \mu(\exE_k) \exAv_k^{\sremin}
  \]
  for all $1 \leq j - k \leq (\log k)^{\srelog}$.
  By the hypothesis~\ref{item:r-explicit:L1:2},
  $\exAv_k \leqs r_k^{d_0/p}$. Hence
  \[
    \mu( \exE_k \cap \exE_j )
    \leqs \mu(\exE_k) r_k^{\sremin d_0/p}
  \]
  for all $1 \leq j - k \leq (\log k)^{\srelog}$.
  Also, the monotonicity of $(r_k)$ implies
  \[
    \int_X \mu(B(x,r_k)) \mu(B(x,r_j)) \diff\mu(x)
    \leqs \exAv_k r_k^{d_0/p}
    \leqs \mu(\exE_k) r_k^{\sremin d_0/p}.
  \]
  Hence, for all $s \in (0,1-\seqlow d_0)$,
  \[
  \begin{split}
    \sum_{k=1}^{n-1} \sum_{j=k+1}^{k+\beta_n}
    \mu(\exE_k \cap \exE_j) + \int_X \mu(B(x,r_k)) & \mu(B(x,r_j))
      \diff\mu(x) \\
    &\leqs \beta_n n^{s} + \sum_{k=n^{s}}^{n-1} \sum_{j=k+1}^{k+\beta_n}
      \mu(\exE_k) r_k^{\sremin d_0/p} \\
    &\leqs \beta_n n^{s} + \sum_{k=n^{s}}^{n-1}
      \mu(\exE_k) r_k^{\sremin d_0/p} \log k.
  \end{split}
  \]
  Now, 
  $\beta_n n^{s} = \smallo(n^{1-\seqlow d_0}) = \smallo(\exesum_n)$
  and
  \[
    \mu(\exE_k) r_k^{\sremin d_0/p} \log k
    = \smallo( \mu(\exE_k) )
  \]
  by the hypothesis~\ref{item:r-explicit:L1:4}.
  We have therefore shown \eqref{eq:var:explicit:sigma:limit}, which
  concludes the first step.

  \step[Pointwise convergence of the sum]
  \label{step:prop:r-explicit:2}
  We now prove
  \begin{equation}
  \label{eq:var:explicit:l1:goal:3}
    \lim_{n \to \infty}
    \frac{\sum_{k=1}^{n}\mu(B(y,r_k))}{\sum_{k=1}^{n} \exAv_k}
    = \frac{h(y)}{\mu(h)}
    \quad \text{$\mu$-a.e.\ $y$}.
  \end{equation}

  For each $k \geq 1$, define $F_k \colon X \to [0,\infty]$ by
  \[
    F_k(y) \coleq \frac{\mu(B(y,r_k))}{\leb(B(y,r_k))}
    = \frac{1}{\leb(B(y,r_k))} \int_{B(y,r_k))} h \diff\leb
  \]
  By the Lebesgue differentiation theorem,
  $\lim_{k \to \infty}F_k(y) = h(y)$ for $\leb$-a.e.\ $y$.
  We will first show that
  \begin{equation}
  \label{eq:var:explicit:l1:goal:3:F}
    \lim_{n \to \infty} \int_X F_k \diff\mu = \int_X h \diff\mu
  \end{equation}
  by using the Hardy--Littlewood maximal function. We will then use
  \eqref{eq:var:explicit:l1:goal:3:F} to obtain estimates for
  $\exAv_k$.

  The Hardy--Littlewood maximal function $\hlmax$ is defined by
  \[
    (\hlmax f)(x) = \sup_{r > 0} \frac{1}{\leb(B(x,r))} \int_{B(x,r)}
    |f| \diff\leb
  \]
  for any locally integrable function $f$.
  It is clear that $F_k \leq \hlmax h$ for all $k \geq 1$.
  If $f \in \Ell{p}(\leb)$ for some $p > 1$, then 
  \[
    \lnorm{\hlmax f}{p} \leqs_p \lnorm{f}{p}
  \]
  see for example \cite[Theorem~1]{stein-1970-singular}. In particular, 
  since $h \in \Ell{q'}(\leb)$ for all $q' \in (1,q]$,
  \[
    \lnorm{F_k}{p} \leq \lnorm{\hlmax h}{q'} \leqs_{q'} \lnorm{h}{q'}
    \quad \text{for all } q' \in (1,q].
  \]
  Therefore, using that $q > 2$, the limit in
  \eqref{eq:var:explicit:l1:goal:3:F} and the fact that $(X,\leb)$ is a
  finite measure-space (by compactness), we have
  $\lim_{k \to \infty} \normsp{F_k - h}{\Ell{2}(\leb)} = 0$.
  By the Cauchy--Schwarz inequality, we therefore have
  \[
    \lim_{k \to \infty} \normsp{F_k - h}{\Ell{1}(\mu)} = 0.
  \]

  At the same time
  \[
    \exAv_k
    = \int_X \mu\bigl(B(x,r_k)\bigr) \diff\mu(x)
    = \int_X \leb(B(x,r_k)) F_k(x) \diff\mu(x).
  \]
  Since $(X,d)$ is a compact Riemannian manifold, we have
  $\leb(B(x,r)) = \leb(B(y,r))( 1 + \bigo(r))$ for all $x,y \in X$ and
  all sufficiently small $r > 0$. Thus, for sufficiently large $k$ and
  any $x_0 \in X$, we have
  \[
    \exAv_k = \leb(B(x_0,r_k)) ( 1 + \bigo(r_k)) \int_X F_k(x)
    \diff\mu(x).
  \]
  By \eqref{eq:var:explicit:l1:goal:3:F},
  \[
    \exAv_k = \leb(B(x_0,r_k)) ( 1 + \bigo(r_k))
    \Bigl( \int_X h \diff\mu + \smallo(1) \Bigr)
  \]
  for all sufficiently large $k$. By summing over $k=1,\ldots,n$,
  \[
    \sum_{k=1}^{n} \exAv_k
    = \Bigl( \int_X h \diff\mu + \smallo(1) \Bigr)
    \sum_{k=1}^{n} \leb(B(x_0,r_k)) ( 1 + \bigo(r_k)).
  \]
  By the hypothesis~\ref{item:r-explicit:L1:4},
  $r_k \to 0$ and
  $\sum_{k=1}^{\infty} \leb(B(x_0,r_k)) \geqs \sum_{k=1}^{\infty}
  r_k^{d_0} = \infty$. Therefore,
  \[
    \lim_{n \to \infty} \frac{\sum_{k=1}^{n} \exAv_k}%
    {\sum_{k=1}^{n} \leb( B(y,r_k) )} = \int_X h \diff\mu
    \quad \text{for all } y \in X.
  \]
  Using a similar argument, we have
  \[
    \lim_{n \to \infty} \frac{\sum_{k=1}^{n} \mu( B(y,r_k) )}%
    {\sum_{k=1}^{n} \leb(B(y,r_k))}
    = h(y) \quad \text{$\mu$-a.e.\ $y \in X$}.
  \]
  By combining the previous two equations, we obtain
  \eqref{eq:var:explicit:l1:goal:3}.

  \step[Convergence in measure]
  \label{step:prop:r-explicit:3}
  Using \eqref{eq:var:explicit:l1:goal:3}, we will now prove
  \begin{equation}
  \label{eq:var:explicit:l1:goal:4}
    \frac{\exs_n^2}{\exsig_n^2} \xrightarrow[n \to \infty]{\mu}
    \frac{h}{\mu(h)}.
  \end{equation}

  Expanding the expression for $\exs_n^2$, we have
  \begin{multline}
  \label{eq:prop:r-explicit:s}
    \exs_n^2(y) = \sum_{k=1}^{n} \mu(B(y,r_k)) - \mu(B(y,r_k))^2 \\
    + \sum_{k=1}^{n-1}\sum_{j=k+1}^{n} \covar\bigl(
      \charfun_{B(y,r_k)}, \charfun_{B(y,r_j)} \circ T^{j-k} \bigr).
  \end{multline}
  By \eqref{eq:var:explicit:sigma:limit} in
  Step~\ref{step:prop:r-explicit:1}, we have
  $\exsig_n^2 \sim \exesum_n$.
  Since $\exesum_n \sim \exmsum_n = \sum_{k=1}^{n} \exAv_k$ by
  Lemma~\ref{lem:r-explicit:meas}, we therefore have, for all
  $\varepsilon > 0$,
  \begin{equation}
  \label{eq:var:explicit:l1:4:meas:1}
    \lim_{n \to \infty} \mu \biggl\{y : \biggl| \frac{\sum_{k=1}^{n}
    \mu(B(y,r_k))}{\exsig_n^2} - \frac{h(y)}{\mu(h)} \biggr|
    > \varepsilon \biggr\} = 0
  \end{equation}
  by \eqref{eq:var:explicit:l1:goal:3} in
  Step~\ref{step:prop:r-explicit:2}. Additionally, since $r_k \to 0$,
  \begin{equation}
  \label{eq:prop:r-explicit:s:sum:1}
    \sum_{k=1}^{n} \int \mu(B(y,r_k))^2 \diff\mu(y)
    \leqs \sum_{k=1}^{n} \exAv_k r_k^{d_0/p}
    = \smallo(\exmsum_n) = \smallo(\exsig_n^2).
  \end{equation}
  By \eqref{eq:var:explicit:l1:4:meas:1} and
  \eqref{eq:prop:r-explicit:s:sum:1},
  it suffices to show that the double sum in
  \eqref{eq:prop:r-explicit:s} converges to $0$ in measure in order to
  establish \eqref{eq:var:explicit:l1:goal:4}.

  Since the centres of the balls in $\exs_n^2(y)$ do not depend on
  iterated point, we can apply a similar yet simpler argument as in the
  proof of Lemma~\ref{lem:r-explicit:correlations} in order to control
  correlations for $j \geq k + \beta_k$. That is, we obtain
  \begin{equation}
  \label{eq:var:explicit:l1:4:meas:long}
      \sum_{k=1}^{n-1}\sum_{j=k+\beta_k+1}^{n} \bigl| \covar\bigl(
        \charfun_{B(y,r_k)}, \charfun_{B(y,r_j)} \circ T^{j-k} \bigr)
      \bigr|
      = \bigo(1) = \smallo(\exsig_n^2).
  \end{equation}
  It therefore remains to control the terms when $j \leq k + \beta_k$.

  By Markov's inequality,
  \begin{multline}
  \label{eq:var:explicit:l1:4:markov}
    \mu \biggl\{y \in X : \biggl|
      \frac{1}{\exsig_n^2}
      \sum_{k=1}^{n-1}\sum_{j=k+1}^{k+\beta_k} \covar\bigl(
        \charfun_{B(y,r_k)}, \charfun_{B(y,r_j)} \circ T^{j-k} \bigr)
      \biggr| > \varepsilon \biggr\} \\
    \leqs \frac{1}{\varepsilon^2 \exsig_n^2}
    \sum_{k=1}^{n-1}\sum_{j=k+1}^{k+\beta_k} \int_X \mu\bigl(B(y,r_k)
      \cap T^{-(j-k)}B(y,r_j)\bigr) \diff\mu(y).
  \end{multline}
  By Condition~\ref{cond:sr:explicit}, we have
  \[
    \sum_{j=k+1}^{k+\beta_k} \int_X \mu\bigl(B(y,r_k) \cap
      T^{-(j-k)} B(y,r_j)\bigr) \diff\mu(y)
    \leqs \exAv_k^{1+\sremin}\beta_k 
    \leqs \exAv_k r_k^{\sremin d_0/p} \log k
  \]
  where $\sremin = \min \{\srein, \sreout\}$ (see
  \eqref{eq:var:explicit:l1:1:sr} in the proof of
  Lemma~\ref{lem:r-explicit:sr}).
  Now, by the hypothesis~\ref{item:r-explicit:L1:4},
  $r_k \leqs (\log k)^{-\seqhigh}$ for some
  $\seqhigh > p(\sremin d_0)^{-1}$. Therefore,
  \[
    \sum_{j=k+1}^{k+\beta_k} \int_X \mu\bigl(B(y,r_k) \cap
      T^{-(j-k)} B(y,r_j)\bigr) \diff\mu(y)
    = \smallo( \exAv_k )
  \]
  and
  \[
    \sum_{k=1}^{n-1}\sum_{j=k+1}^{k+\beta_k} \int_X \mu\bigl(B(y,r_k)
      \cap T^{-(j-k)}B(y,r_j)\bigr) \diff\mu(y)
    = \smallo(\exmsum_n) = \smallo(\exsig_n^2).
  \]
  Hence,
  \begin{equation}
  \label{eq:var:explicit:l1:4:meas:2}
    \lim_{n \to \infty} \mu \biggl\{y \in X : \biggl|
    \frac{1}{\exsig_n^2} \sum_{k=1}^{n-1}\sum_{j=k+1}^{k+\beta_k}
    \covar\bigl( \charfun_{B(y,r_k)}, \charfun_{B(y,r_j)} \circ T^{j-k}
      \bigr)
    \biggr| > \varepsilon \biggr\} = 0
  \end{equation}
  for all $\varepsilon > 0$ in view of
  \eqref{eq:var:explicit:l1:4:markov}.

  Finally, we obtain \eqref{eq:var:explicit:l1:goal:4} by combining
  \eqref{eq:var:explicit:l1:4:meas:1},
  \eqref{eq:var:explicit:l1:4:meas:long} and
  \eqref{eq:var:explicit:l1:4:meas:2}.

  \conclusion
  We now employ Proposition~\ref{prop:kallenberg}, after which the
  conclusion follows from a simple calculation.

  By \eqref{eq:var:explicit:l1:goal:4} in
  Step~\ref{step:prop:r-explicit:3}, we have
  $f_n \tomeas{\mu} f$, where
  $f_n \coleq \exs_n^2/\exsig_n^2 \in \Ell{1}(\mu)$
  and $f \coleq h/\mu(h)$.
  By \eqref{eq:var:explicit:l1:goal:2}, we have
  $\lnorm{f_n}{1} \to 1 = \lnorm{h/\mu(h)}{1} < \infty$.
  Hence, by Proposition~\ref{prop:kallenberg},
  \[
    \lim_{n \to \infty} \int_X \biggl|
    \frac{\exs_n^2(y)}{\exsig_n^2} - \frac{h(y)}{\mu(h)} \biggr|
    \diff\mu(y) = 0
  \]
  By the hypothesis~\ref{item:r-explicit:L1:2},
  \[
    \biggr| \frac{\exs_n}{\exsig_n}
      + \frac{\sqrt{h}}{\sqrt{\mu(h)}} \biggr|
    \geq \frac{\sqrt{h}}{\sqrt{\mu(h)}}
    \geq \sqrt{\frac{c}{\mu(h)}} > 0.
  \]
  Therefore,
  \[
    \lim_{n \to \infty} \int_X \biggl| \frac{\exs_n}{\exsig_n}
      - \frac{\sqrt{h}}{\sqrt{\mu(h)}} \biggr| \diff\mu
    \leqs \lim_{n \to \infty} \int_X \biggl|
    \frac{\exs_n^2}{\exsig_n^2} - \frac{h}{\mu(h)} \biggr|
    \diff\mu = 0,
  \]
  which concludes the proof.
\end{proof}

\section{Proof of variance estimates for Section~\ref{sec:clt:clt}}
We now prove the correlation and variance results of
\text{\S}\ref{sec:var:implicit}. The proofs will follow a similar
structure to those in \text{\S}\ref{sec:proof:var:explicit}, but we
will see that many of the estimates are simpler due to the rescaled
radii.

\subsection{Proof of Lemma~\ref{lem:r-implicit:correlations}}
Similarly to the proof of Lemma~\ref{lem:r-explicit:correlations}, the
result follows from a simple application of
Lemmas~\ref{lem:construction} and \ref{lem:correlation}.
Again, we prove only the upper bound since the lower bound follows in a
similar manner.
Recall the constants $\frost, \thinan, \thinanr$ from
Conditions~\ref{cond:frostman} and \ref{cond:thin-annuli}.

\begin{proof}[Proof of Lemma~\ref{lem:r-implicit:correlations}]
  The hypothesis~\ref{item:r-implicit:correlations:3}, namely
  $\imM_k \to 0$, implies $\imr_k \to 0$ uniformly on $\supp\mu$ (see
  the discussion in \text{\S}\ref{sec:discussion:radius}).
  Thus, we may without loss of generality assume that
  $\imr_k \leq \thinanr$ for all $k$.
  Furthermore, $\lipconst{\imr_k} = 1$ for all $k$.
  For convenience, let $\frostan = \min\{\frost,\thinan\}$ and
  $d_0 = \dim X$.

  Let $\kappa > 0$ be a constant to be specified later, and let
  $\varepsilon_n = n^{-\kappa}$.
  We apply Lemma~\ref{lem:construction} to $\abr = \imr_k$ and
  $\varepsilon = \varepsilon_n$ to obtain H{\"o}lder continuous
  approximations of $\charfun_{\imE_k}$.
  Proceeding as in the proof of
  Lemma~\ref{lem:r-explicit:correlations}, we obtain the estimate
  \begin{multline*}
    \mu(\imE_k \cap \imE_j)
    \leq \int_X \mu\bigl(B(y,\imr_k(y))\bigr)
      \mu\bigl(B(y,\imr_j(y))\bigr) \diff\mu(y) \\
    + \bigo\bigl( n^{-\kappa} +
      n^{-\frostan\kappa} + n^{4(d_0+3)\kappa/\alpha}
      (e^{-\mdoc k} + e^{-\mdoc(j-k)})\bigr).
  \end{multline*}

  In contrast to the proof of Lemma~\ref{lem:construction}, we have
  have $\mu(B(y,\imr_k(y))) = \imM_k$ for all $y \in \supp\mu$.
  Therefore,
  \[
    \mu(\imE_k \cap \imE_j) \leq \imM_k \imM_j
    + \bigo\bigl( n^{-\kappa} + n^{-\frostan\kappa}
    + n^{4(d_0+3)\kappa/\alpha}
    (e^{-\mdoc k} + e^{-\mdoc(j-k)})\bigr).
  \]
  By Lemma~\ref{lem:r-implicit:meas},
  $|\mu(\imE_k)\mu(\imE_j) - \imM_k\imM_j| \leqs e^{-\limmeas k}$.
  Therefore,
  \begin{multline*}
    \mu(\imE_k \cap \imE_j) \leq \mu(\imE_k) \mu(\imE_j)
    + \bigo\bigl( n^{-\kappa} + n^{-\frostan\kappa}
    + e^{-\limmeas k} \\
    + n^{4(d_0+3)\kappa/\alpha} (e^{-\mdoc k} + e^{-\mdoc(j-k)})
    \bigr).
  \end{multline*}
  By choosing $\kappa > \max \{2, 2/\frostan\}$ and
  \[
    \limcorsum = \min \{\kappa, \frostan\kappa\} > 2
    \quad
    \limcorcoeff = \frac{4(d_0+3)\kappa}{\alpha},
    \quad
    \limcor = \min \{\mdoc, \limmeas\},
  \]
  we obtain the upper bound.
  The lower bound is established in a similar way.
\end{proof}

\subsection{Proofs of Lemmas~\ref{lem:r-implicit:liminf:sigma} and
\ref{lem:r-implicit:liminf:s}}
\label{sec:proof:r-implicit:liminf}
We will give a full proof of Lemma~\ref{lem:r-implicit:liminf:sigma}.
The proof of Lemma~\ref{lem:r-implicit:liminf:s} is almost identical,
except it uses a weaker decorrelation condition. Instead of repeating
the proof, we choose to highlight at which points we use the weaker
condition. We begin with the proof of
Lemma~\ref{lem:r-implicit:liminf:sigma}.
As in the proof of corresponding lemma,
Lemma~\ref{lem:r-explicit:liminf}, we will divide the proof into three
main steps. We shift time, then estimate the leading sum of
$\imsig_n^2$, and then estimate the double sum of $\imsig_n^2$.

\begin{proof}[Proof of Lemma~\ref{lem:r-implicit:liminf:sigma}]
  For convenience, we write $\frostan = \min \{\frost,\thinan\}$,
  $d_0 = \dim X$ and
  \[
    \imesum_n = \sum_{k=1}^{n} \mu(\imE_k).
  \]
  As in the proof of Lemma~\ref{lem:r-explicit:liminf}, we may assume
  that $\imr_k \leq \thinanr$ for all $k$.

  \step[shifting time]
  Let $D > 0$ be a constant to be specified later, and write
  $\beta_n = \log n$. Writing
  \[
    \imsig_{n,\beta_n}^2 = \int_X \biggl( \sum_{k=1}^{n}
    \tilde{\charfun}_{\imE_{k+\beta_n}} \biggr)^2 \diff\mu,
  \]
  we have
  \begin{equation}
  \label{eq:var:implicit:shift}
    | \imsig_{n,\beta_n}^2 - \imsig_n^2|
    \leqs \beta_n \imsig_{n,\beta_n} + \beta_n^2
  \end{equation}
  as in \eqref{eq:var:explicit:shift} in the proof of
  Lemma~\ref{lem:r-explicit:liminf}.
  It therefore suffices to show
  \begin{equation}
  \label{eq:var:r-implicit:shift:liminf}
    \liminf_{n \to \infty} \frac{\imsig_{n,\beta_n}^2}{\imesum_n}
    \geq 1
  \end{equation}
  for the main part of the lemma.

  Expanding the expression for $\imsig_{n,\beta_n}^2$, we have
  \begin{equation}
  \label{eq:lem:r-implicit:liminf:sigma:expand}
    \imsig_{n,\beta_n}^2
    = \sum_{k=1}^{n} \mu(\imE_{k+\beta_n}) - \mu(\imE_{k+\beta_n})^2
    + 2\sum_{k=1}^{n-1} \sum_{j=k+1}^{n}
    \covar( \charfun_{\imE_{k+\beta_n}}, \charfun_{\imE_{j+\beta_n}} ).
  \end{equation}
  We now estimate the two sums.

  \step[estimating the leading sum]
  By the hypothesis~\ref{item:r-implicit:liminf:sigma:3},
  $\imM_k \geqs k^{-\seqlow}$. Thus, by Lemma~\ref{lem:r-implicit:meas},
  $\mu(\imE_k) \geqs k^{-\seqlow}$ and $\imesum_n \geqs n^{1-\seqlow}$.
  Hence,
  \begin{equation}
  \label{eq:r-implicit:liminf:sigma:step:1}
    \sum_{k=1}^{n} \mu(\imE_{k+\beta_n}) - \mu(\imE_{k+\beta_n})^2
    = \imesum_n - \smallo(\imesum_n) + \bigo(\beta_n) = \imesum_n -
    \smallo(\imesum_n).
  \end{equation}

  \step[estimating the double sum]
  We split the double sum into two different estimates: one
  for $j \leq k + \beta_n$ and another for $j > k + \beta_n$.

  For $j \leq k + \beta_n$, we have
  \begin{equation}
  \label{eq:r-implicit:liminf:sigma:cov:1}
  \begin{split}
    \sum_{k=1}^{n-1}\sum_{j=k+1}^{k+\beta_n}
    \covar( \charfun_{\imE_{k+\beta_n}}, \charfun_{\imE_{j+\beta_n}} )
    &\geq - \sum_{k=1}^{n-1}\sum_{j=k+1}^{k+\beta_n}
    \mu(\imE_{k+\beta_n})\mu(\imE_{j+\beta_n}) \\
    &\geqs - \beta_n\sum_{k=1}^{n-1} \mu(\imE_{k+\beta_n})^2,
  \end{split}
  \end{equation}
  where we have used the fact that $(\mu(\imE_n))$ is monotone up to a
  constant coefficient. Indeed, $(\imM_k)$ is monotone, thus in
  combination with Lemma~\ref{lem:r-implicit:meas}, we have
  $\mu(\imE_j) \leqs \imM_j \leq \imM_k \leqs \mu(\imE_k)$
  for $k \leq j$. Now, let $s \in (0,1-\seqlow)$ and split the sum up
  into $k < n^{s}$ and $k \geq n^{s}$ as follows:
  \[
    \beta_n \sum_{k=1}^{n-1} \mu(\imE_{k+\beta_n})^2
    \leqs \beta_n \sum_{k=1}^{n-1} \mu(\imE_k)^2
    \leqs \beta_n n^{s} + \sum_{k=n^{s}}^{n-1} \mu(\imE_k)^2 \log k.
  \]
  Note that we used the monotonicity
  up to a multiplicative constant of $\mu(\imE_k)$,
  and $\beta_n \eqs \log n \leqs \log k$ whenever $k \geq n^{s}$.
  Since $\mu(\imE_k) \geqs n^{1-\seqlow}$
  and $\mu(\imE_k) \log k = \smallo(1)$
  by Lemma~\ref{lem:r-implicit:meas} and the
  hypothesis~\ref{item:r-explicit:liminf:3}, we have
  \[
    \beta_n \sum_{k=1}^{n-1} \mu(\imE_{k+\beta_n})^2
    = \smallo(\imesum_n).
  \]
  In view of \eqref{eq:r-implicit:liminf:sigma:cov:1}, we obtain
  \[
    \sum_{k=1}^{n-1}\sum_{j=k+1}^{k+\beta_n}
    \covar( \charfun_{\imE_{k+\beta_n}}, \charfun_{\imE_{j+\beta_n}} )
    \geq \smallo(\imesum_n).
  \]

  For $j > k + \beta_n$, we use
  Lemma~\ref{lem:r-implicit:correlations}. That is, there exists
  $\limcorsum > 2$ and $\limcorcoeff, \limcor  > 0$ such that
  \[
  \begin{split}
    \sum_{k=1}^{n-1}\sum_{j=k+\beta_n+1}^{n}
      \lvert \covar( & \charfun_{\imE_{k+\beta_n}},
      \charfun_{\imE_{j+\beta_n}}) \rvert \\
    &\leqs \sum_{k=1}^{n-1}\sum_{j=k+\beta_n+1}^{n}
      \bigl(n^{-\limcorsum} + n^{\limcorcoeff}
      (e^{-\limcor (k + \beta_n)} + e^{-\limcor (j-k)})\bigr) \\
    &\leqs n^{2-\limcorsum} + n^{2+\limcorcoeff} e^{-\limcor \beta_n}\\
    &= \smallo(\imesum_n).
  \end{split}
  \]
  Choosing $D$ in $\beta_n = D \log n$ sufficiently large, we have
  \begin{equation}
  \label{eq:lem:r-implicit:liminf:sigma:cov:2}
    \sum_{k=1}^{n-1}\sum_{j=k+\beta_n+1}^{n} \lvert \covar(
    \charfun_{\imE_{k+\beta_n}}, \charfun_{\imE_{j+\beta_n}}) \rvert
    = \smallo(\imesum_n).
  \end{equation}
  By \eqref{eq:r-implicit:liminf:sigma:cov:1} and
  \eqref{eq:lem:r-implicit:liminf:sigma:cov:2}, we have
  \begin{equation}
  \label{eq:lem:r-implicit:liminf:sigma:cov}
    \sum_{k=1}^{n-1}\sum_{j=k+1}^{n} \covar(
    \charfun_{\imE_{k+\beta_n}}, \charfun_{\imE_{j+\beta_n}})
    \geq \smallo(\imesum_n).
  \end{equation}

  \conclusion
  In view of \eqref{eq:lem:r-implicit:liminf:sigma:expand},
  \eqref{eq:r-implicit:liminf:sigma:step:1} and
  \eqref{eq:lem:r-implicit:liminf:sigma:cov}, we obtain
  \eqref{eq:var:r-implicit:shift:liminf}, which completes the proof of
  the main part of the lemma. For the second part, notice that
  $\mu(\imE_k) \geqs \imM_k \geqs n^{-\seqlow}$ by
  Lemma~\ref{lem:r-implicit:meas}.
  Thus, $\imsig_n^2 \geqs \imesum_n \geqs n^{1-\seqlow}$.
\end{proof}

We now prove Lemma~\ref{lem:r-implicit:liminf:s}, highlighting where we
use Condition~\ref{cond:doc} instead of \ref{cond:multiple-doc}.
Recall that
\[
  \ims_n^2(y) = \var \biggl( \sum_{k=1}^{n} \charfun_{B(y,\imr_k(y))}
  \circ T^{k} \biggr)
  \quad \text{and} \quad
  \immsum_n = \sum_{k=1}^{n} \imM_k.
\]

\begin{proof}[Proof of Lemma~\ref{lem:r-implicit:liminf:s}]
  By Lemma~\ref{lem:wlog}, we may assume without loss of generality
  that $\imr_k \leq \thinanr$ for all $k$.
  By expanding the expression for $\ims_n^2(y)$, we have
  \begin{multline*}
    \ims_n^2(y)
    = \sum_{k=1}^{n} \imM_k - \imM_k^2 \\
    + 2\sum_{k=1}^{n-1} \sum_{j=k+1}^{n} \mu\bigl( B(y,\imr_k(y)) \cap
    T^{-(j-k)}B(y,\imr_j(y)) \bigr) - \imM_k \imM_j.
  \end{multline*}
  We see directly that the leading sum can be estimated by
  $\immsum_n - \smallo(\immsum_n)$.
  The double sum is treated in a similar way as in the proof of
  Lemma~\ref{lem:r-implicit:liminf:sigma}, but notice that the centre
  of the balls do not depend on the integration variable. Thus, it
  suffices to use Condition~\ref{cond:doc} instead of
  \ref{cond:multiple-doc}. The estimates also do not depend on $y$.
  
  In conclusion, we obtain
  \[
    \ims_n^2(y) \geq \immsum_n - \smallo(\immsum_n).
  \]
  Since $\imM_n \geqs n^{-\seqlow}$, we obtain
  $\ims_n^2(y) \geqs n^{1-\seqlow}$, which concludes the proof.
\end{proof}

\subsection{Proof of Proposition~\ref{prop:r-implicit:mean}}
The main idea of the proof is to use the quasi-partition of
Lemma~\ref{lem:partition} and decay of correlations. We only prove the
conclusion with the limit replaced by the limit inferior, since the
corresponding limit supremum follows in a similar manner.

\begin{proof}[Proof of Proposition~\ref{prop:r-implicit:mean}]
  By Lemma~\ref{lem:wlog}, we may without loss of generality assume
  $\imr_k \leq \thinanr$ for all $k$.
  For convenience, we write $\frostan = \min \{\frost,\thinan\}$
  and $d_0 = \dim X$.

  We only show
  \begin{equation}
  \label{eq:lem:var:implicit:mean:liminf}
    \liminf_{n \to \infty} \int_X \frac{\ims_n^2(y)}{\imsig_n^2}
    \diff\mu(y) \geq 1,
  \end{equation}
  as the corresponding expression for the limit superior can be shown
  in a similar way.

  Similar to previous proofs of variance estimates, we introduce a
  logarithmic time-shift. Let $D > 0$ be a constant to be specified
  later, and write $\beta_n = D \log n$. Write
  \[
    \imsig_{n,\beta_n}^2 = \var \biggl( \sum_{k=1+\beta_n}^{n+\beta_n}
    \tilde{\charfun}_{\imE_k} \biggr)
    \quad \text{and} \quad
    \ims_{n,\beta_n}^2(y) = \var \biggl( \sum_{k=1+\beta_n}^{n+\beta_n}
    \tilde{\charfun}_{B(y,\imr_k(y))} \circ T^{k} \biggr)
  \]
  As in the proof of Lemma~\ref{lem:r-implicit:liminf:sigma} (see
  \eqref{eq:var:implicit:shift}), we have
  \begin{equation}
  \label{eq:var:implicit:mean:shift}
    |\imsig_n^2 - \imsig_{n,\beta_n}^2|
    \leqs \beta_n \imsig_n + \beta_n^2,
  \end{equation}
  and similarly,
  \begin{equation}
  \label{eq:var:implicit:mean:shift:s}
    |\ims_n^2(y) - \ims_{n,\beta_n}^2(y)|
    \leqs \beta_n \ims_n(y) + \beta_n^2
    \quad \text{uniformly in } y \in \supp\mu.
  \end{equation}
  As it will be used later, we expand the expressions
  $\imsig_{n,\beta_n}^2$ and $\ims_{n,\beta_n}^2$ in the following way:
  \begin{align}
  \label{eq:var:implicit:mean:shift:sigma}
    \imsig_{n,\beta_n}^2
    &= \int_X \biggl(\sum_{k=1}^{n} \charfun_{\imE_{k+\beta_n}}
      \biggr)^2 \diff\mu
      - \biggl( \sum_{k=1}^{n} \mu(\imE_{k+\beta_n}) \biggr)^2 \\
  \label{eq:var:implicit:mean:shift:s:expand}
    \ims_{n,\beta_n}^2(y) &=
    \int_X \biggl(\sum_{k=1}^{n} \charfun_{B(y,\imr_k(y))} \circ T^{k}
      \biggr)^2 \diff\mu
      - \biggl( \sum_{k=1}^{n} \imM_{k+\beta_n} \biggr)^2
  \end{align}

  We will now use the quasi-partition from Lemma~\ref{lem:partition} in
  order to relate $\imsig_{n,\beta_n}^2$ and the average of
  $\ims_{n,\beta_n}^2$ in a series of estimates.
  Let $\kappa > 0$ be a constant to be specified later, and write
  $\varepsilon_n = n^{-\kappa}$.
  Apply Lemma~\ref{lem:partition} to $\kappa_0 = 2\kappa$ to obtain the
  set $Y_n$, the partition $\partition_n$ of $X \setminus Y_n$, the
  densities $\{\rho_{Q,n}\}_{Q \in \partition_n}$ and the corresponding
  estimates.
  For each $Q \in \partition_n$, we may select a point
  $x_Q \in Q \cap \supp\mu$ since $Q$ has positive measure.
  This ensures that $\mu(B(x_Q,\imr_k(x_Q))) = M_k$.

  Apply Lemma~\ref{lem:construction} to $\varepsilon = \varepsilon_n$
  and $\abr = \imr_n$, and obtain Lipschitz continuous functions
  $g_{n,k} \colon X \times X \to [0,1]$ such that
  \begin{equation}
  \label{eq:1}
    \lipnorm{g_{n,k}} \leqs \varepsilon_n^{-1} \leqs n^{\kappa},
  \end{equation}
  \begin{equation}
  \label{eq:2}
    \charfun_{B(x,\imr_n(x))}(y) \leq g_{n,k}(x,y) \leq 
    \charfun_{B(x,\imr_n(x) + \varepsilon_n)}(y)
  \end{equation}
  and
  \begin{equation}
  \label{eq:3}
    \int_X \bigl\lvert g_{n,k}(x,y) - M_k \bigr\rvert \diff\mu(y)
    \leqs \varepsilon_n^{\thinan} \leqs n^{-\kappa \frostan}.
  \end{equation}
  In particular, \eqref{eq:2} implies
  $\charfun_{\imE_k}(x) \leq g_{n,k}(x,T^{k}x)$, and \eqref{eq:1}
  implies, for each $x \in Q$,
  \begin{equation}
  \label{eq:var:implicit:mean:approx:1}
    \biggl| \biggl( \sum_{k=1+\beta_n}^{n+\beta_n} g_{n,k}(x,T^{k}x)
      \biggr)^2
    - \biggl( \sum_{k=1+\beta_n}^{n+\beta_n} g_{n,k}(x_Q,T^{k}x)
      \biggr)^2 \biggr|
    \leqs n^2 \cdot n^{\kappa} \diam{\partition_n}
    = \smallo(1).
  \end{equation}
  where the last inequality holds if we select $\kappa > 2$ since
  $|\partition_n| \leqs n^{-2\kappa}$.
  Hence,
  \begin{multline*}
    \int_X \biggl( \sum_{k=1}^{n} \charfun_{\imE_{k+\beta_n}} \biggr)^2
      \diff\mu
    \leq \sum_{Q \in \partition_n} \int_Q
    \biggl(
      \sum_{k=1}^{n} g_{n,k+\beta_n}(x_Q,T^{k+\beta_n}x) \biggr)^2
      \diff\mu(x) \\
      + \smallo(1) + \bigo(n^2 \mu(Y_n)).
  \end{multline*}
  Now, $n^2 \mu(Y_n) \leqs n^{2(1-\kappa)} = \smallo(1)$ since
  $\kappa > 2$; thus, we may replace $\bigo(n^2\mu(Y_n))$ above by
  $\smallo(1)$.

  We now use the densities $\{\rho_{Q,n}\}$, which
  satisfy
  $\lnorm{\rho_{Q,n} - \mu(Q)^{-1} \charfun_Q}{1} \leqs n^{-2\kappa}$.
  Hence,
  \begin{multline}
  \label{eq:var:implicit:mean:approx:3}
    \int_X \biggl( \sum_{k=1}^{n} \charfun_{\imE_{k+\beta_n}} \biggr)^2
      \diff\mu \\
    \leq \sum_{Q \in \partition_n} \mu(Q) \int_X \rho_{Q,n}(x) \biggl(
      \sum_{k=1}^{n} g_{n,k+\beta_n}(x_Q,T^{k+\beta_n}x)
      \biggr)^2 \diff\mu(x) 
    + \smallo(1),
  \end{multline}
  where the error $\bigo(n^{2(1-\kappa)})$ arising from the
  approximation by the densities $\{\rho_{Q,n}\}$ can be replaced by
  $\smallo(1)$, again by selecting $\kappa > 2$. Now,
  \begin{multline*}
    \int_X \rho_{Q,n}(x) \biggl( \sum_{k=1}^{n}
      g_{n,k+\beta_n}(x_Q,T^{k+\beta_n}x) \biggr)^2 \diff\mu(x) \\
    = \sum_{k,j} \int_X \rho_{Q,n}(x)
      g_{n,k+\beta_n}(x_Q,T^{k+\beta_n}x)
      g_{n,j+\beta_n}(x_Q,T^{j+\beta_n}x) \diff\mu(x) 
  \end{multline*}
  We apply Condition~\ref{cond:multiple-doc} with observables
  $\varphi_1 = \rho_{Q,n}$, $\varphi_2 = g_{n,k+\beta_n}(x_Q, \cdot)$
  and $\varphi_3 = g_{n,j+\beta_n}(x_Q, \cdot)$ when $k < j$, and
  $\varphi_1 = \rho_{Q,n}$ and
  $\varphi_2 = g_{n,k+\beta_n}(x_Q, \cdot)^2$ when $k=j$.
  Since $\mu(\rho_{Q,n}) = 1 + \bigo(n^{-2\kappa})$
  and $\holnorm{\rho_{Q,n}}{\alpha} \leqs n^{4(d+2)\kappa}$, we have
  \begin{multline*}
    \int_X \rho_{Q,n}(x) \biggl( \sum_{k=1}^{n}
      g_{n,k+\beta_n}(x_Q,T^{k+\beta_n}x) \biggr)^2 \diff\mu(x) \\
    = \int_X \biggl( \sum_{k=1}^{n}
      g_{n,k+\beta_n}(x_Q,T^{k+\beta_n}x) \biggr)^2 \diff\mu(x) \\
      + \bigo\bigl(n^{2(1-\kappa)}
      + n^2 n^{4(d+2)\kappa} n^{\kappa} P_r(n)
      e^{-\mdoc \beta_n}\bigr).
  \end{multline*}
  Again, selecting $\kappa > 2$ and then $D$ sufficiently large in
  $\beta_n = D \log n$ allows us to replace the error term by
  $\smallo(1)$.

  In view of \eqref{eq:var:implicit:mean:approx:3}, we therefore have
  \begin{multline}
  \label{eq:var:implicit:mean:approx:2}
    \int_X \biggl( \sum_{k=1}^{n} \charfun_{\imE_{k+\beta_n}} \biggr)^2
      \diff\mu \\
    \leq \sum_{Q \in \partition_n} \mu(Q) \int_X \biggl( \sum_{k=1}^{n}
      g_{n,k+\beta_n}(x_Q,T^{k}x) \biggr)^2 \diff\mu(x) + \smallo(1).
  \end{multline}
  Similarly to how it was used to show
  \eqref{eq:var:implicit:mean:approx:1}, we use the Lipschitz continuity
  of $g_{n,k}$ to obtain
  \[
    \biggl| \mu(Q) \biggl( \sum_{k=1}^{n} g_{n,k+\beta_n}(x_Q,T^{k}x)
    \biggr)^2 - \int_Q \biggl( \sum_{k=1}^{n} g_{n,k+\beta_n}(y,T^{k}x)
    \biggr)^2 \diff\mu(y) \biggr|
    = \smallo(1).
  \]
  By \eqref{eq:var:implicit:mean:approx:2}, we therefore have
  \begin{multline*}
    \int_X \biggl( \sum_{k=1}^{n} \charfun_{\imE_{k+\beta_n}} \biggr)^2
      \diff\mu \\
    \leq \sum_{Q \in \partition_n} \int_Q \int_X \biggl(
      \sum_{k=1}^{n} g_{n,k+\beta_n}(y,T^{k}x) \biggr)^2
      \diff\mu(x)\diff\mu(y)
    + \smallo(1).
  \end{multline*}

  Again, using that $\partition_n$ is a partition of $X \setminus Y_n$
  with $n^2 \mu(Y_n) \leqs n^{2(1-\kappa)} = \smallo(1)$, we have
  \[
    \int_X \biggl( \sum_{k=1}^{n} \charfun_{\imE_{k+\beta_n}} \biggr)^2
      \diff\mu
    \leq \iint \biggl( \sum_{k=1}^{n}
      g_{n,k+\beta_n}(y,T^{k}x) \biggr)^2 \diff\mu(x)\diff\mu(y)
    + \smallo(1).
  \]
  By \eqref{eq:2} and \eqref{eq:3}, we have
  \begin{multline*}
    \biggl| \iint \biggl( \sum_{k=1}^{n}
      g_{n,k+\beta_n}(y, T^{k}x) \biggr)^2 \diff\mu(x) \diff\mu(y)\\
    - \iint \biggl( \sum_{k=1}^{n}
      \charfun_{B(y,\imr_{k+\beta_n}(y))} (T^{k}x) \biggr)^2
      \diff\mu(x) \diff\mu(y) \biggr| \\
    \leqs n^{2-\kappa\frostan} = \smallo(1),
  \end{multline*}
  where the last inequality holds by choosing $\kappa > 2/\frostan$.
  By the previous two equations, we therefore have
  \begin{equation}
  \label{eq:var:implicit:mean:approx:4}
    \int \biggl(\sum_{k=1}^{n} \charfun_{\imE_{k+\beta_n}} \biggr)^2
      \diff\mu
    - \iint \biggl( \sum_{k=1}^{n}
      \charfun_{B(y,\imr_{k+\beta_n}(y))}(T^{k}x) \biggr)^2 \diff\mu(x)
      \diff\mu(y)
    \leq \smallo(1).
  \end{equation}

  At the same time, by Lemma~\ref{lem:r-implicit:meas}, we have
  \[
  \begin{split}
    \biggl| \biggl( \sum_{k=1}^{n} \mu(\imE_{k+\beta_n}) \biggr)^2
      - \biggl( \sum_{k=1}^{n} \imM_{k+\beta_n} \biggr)^2 \biggr|
    &\leqs n \sum_{k=1}^{n} \bigl| \mu(\imE_{k+\beta_n})
      - \imM_{k+\beta_n} \bigr| \\
    &\leqs n e^{-\limmeas \beta_n} \\
    &= \smallo(1)
  \end{split}
  \]
  by choosing $D$ in $\beta_n = D \log n$ sufficiently large. Recalling
  the expression for $\imsig_{n,\beta_n}^2$, namely
  \eqref{eq:var:implicit:mean:shift:sigma}, the previous estimate and
  \eqref{eq:var:implicit:mean:approx:4} imply
  \[
    \imsig_{n,\beta_n}^2
    \leq \iint \biggl( \sum_{k=1}^{n}
      \charfun_{B(y,\imr_{k+\beta_n}(y))}(T^{k}x) \biggr)^2 \diff\mu(x)
      \diff\mu(y)
      - \biggl( \sum_{k=1}^{n} \imM_{k+\beta_n} \biggr)^2
      + \smallo(1).
  \]
  Recalling the expression for $\ims_{n,\beta_n}^2(y)$, namely
  \eqref{eq:var:implicit:mean:shift:s:expand}, we have thus shown
  \[
    \imsig_{n,\beta_n}^2 \leq \int_X \ims_{n,\beta_n}^2(y) \diff\mu(y)
    + \smallo(1).
  \]

  In view of \eqref{eq:var:implicit:mean:shift} and
  \eqref{eq:var:implicit:mean:shift:s}, we have therefore established
  \[
    \imsig_n^2 - \int_X \ims_n^2(y) \diff\mu(y) \leqs \beta_n
    \int_X \ims_n(y) \diff\mu(y) + \beta_n \imsig_n + \beta_n^2.
  \]
  Dividing by $\imsig_n^2$ and rearranging, we have
  \begin{equation}
  \label{eq:variance:ineq:5}
    1 \leq \int_X \frac{\ims_n^2}{\imsig_n^2} \diff\mu
    \biggl(1 + \bigo\biggl( \frac{\log n \int \ims_n \diff\mu}{\int
    \ims_n^2 \diff\mu} \biggr)\biggr)
    + \bigo\biggl(\frac{\log n}{\imsig_n}
    + \frac{(\log n)^2}{\imsig_n^2}\biggr),
  \end{equation}
  where we have used that $\beta_n = D \log n$.
  By Lemma~\ref{lem:r-implicit:liminf:sigma}
  $\imsig_n^{-1} \log n \to 0$.
  By Lemma~\ref{lem:r-implicit:liminf:s},
  $\ims_n^2(y) \geqs n^{1-\seqlow}$ uniformly in $y \in \supp\mu$.
  Thus, by Jensen's inequality,
  \[
    \limsup_{n \to \infty} \frac{\log n \int \ims_n(y)
      \diff\mu(y)}{\int \ims_n^2(y) \diff\mu(y)}
    \leqs \limsup_{n \to \infty} \frac{\log n}{\int
      \ims_n(y)\diff\mu(y)}
    \leqs \limsup_{n \to \infty} \frac{\log n}{n^{(1-\seqlow)/2}} = 0.
  \]
  Therefore, by taking $\liminf_{n \to \infty}$ on both sides in
  \eqref{eq:variance:ineq:5}, we obtain
  \eqref{eq:lem:var:implicit:mean:liminf}.

  The corresponding expression for the limit superior is established by
  switching out $g_{n,k}$ for functions approximating
  $\charfun_{\imE_k}$ from below.
  This concludes the proof.
\end{proof}

\subsection{Proof of Proposition~\ref{prop:r-implicit:meas}}
The proof consists of applying Chebyshev's inequality and then
estimating $\var(\ims_n^2)$. Recall that
\[
  \immsum_n = \sum_{k=1}^{n} \imM_k.
\]

\begin{proof}[Proof of Proposition~\ref{prop:r-implicit:meas}]
  By Lemma~\ref{lem:wlog}, we may without loss of generality assume
  that $\imr_k \leq \thinanr$ for all $k$. Let
  $\frostan = \min \{\frost,\thinan\}$ and $d_0 = \dim X$.
  Let $D > 0$ be a constant to be specified later, and write
  $\beta_n = D \log n$.

  Fix $\varepsilon > 0$. We wish to show
  \[
    \lim_{n \to \infty}
    \mu \biggl\{ y \in X : \Bigl \lvert \frac{\ims_n^2(y)}{\imsig_n^2}
    - 1 \Bigr \rvert > \varepsilon \biggr\}
    = 0,
  \]
  which, by Proposition~\ref{prop:r-implicit:mean}, is equivalent to
  \[
    \lim_{n \to \infty} \mu \biggl\{y \in X : \biggl|
    \frac{\ims_n^2(y)}{\imsig_n^2}
    - \int_X \frac{\ims_n^2}{\imsig_n^2} \diff\mu \biggr|
    > \varepsilon \biggr\} = 0.
  \]
  Furthermore, by Chebyshev's inequality, we have
  \[
    \mu \biggl\{y \in X: \biggl| \frac{\ims_n^2(y)}{\imsig_n^2}
    - \int_X \frac{\ims_n^2}{\imsig_n^2} \diff\mu \biggr|
    > \varepsilon \biggr\}
    \leq \frac{\var(\ims_n^2)}{\varepsilon^2 \imsig_n^{4}}.
  \]
  It therefore suffices to show
  $\var(\ims_n^2) = \smallo(\imsig_n^{4})$, which we devote the rest of
  the proof to.

  Expanding the expression for $\ims_n^2(y)$, we have
  \begin{multline*}
    \ims_n^2(y) = \sum_{k=1}^{n} \imM_k - \imM_k^2 \\
    + \sum_{k=1}^{n-1} \sum_{j=k+1}^{n} \mu\bigl( B(y,\imr_k(y))
    \cap T^{-(j-k)} B(y,\imr_j(y)) \bigr) - \imM_k \imM_j.
  \end{multline*}
  We see directly that the leading sum satisfies
  $\sum_{k=1}^{n} \imM_k - \imM_k^2 = \immsum_n - \smallo(\immsum_n)$ 
  since $\sum_{k=1}^{\infty} \imM_k = \infty$ and $\imM_k \to 0$
  by the hypothesis~\ref{item:r-implicit:meas:4}.
  Furthermore, by the proof of Lemma~\ref{lem:r-implicit:liminf:s}, we
  have
  \[
    \sum_{k=1}^{n-1}\sum_{j=k+\beta_n+1}^{n} \mu\bigl( B(y,\imr_k(y))
    \cap T^{-(j-k)} B(y,\imr_j(y)) \bigr) - \imM_k \imM_j
    = \smallo(1)
  \]
  uniformly in $y \in \supp\mu$, provided we choose $D$ in
  $\beta_n = D \log n$ sufficiently large. We also have
  \[
    \sum_{k=1}^{n-1} \sum_{j=k+1}^{k+\beta_n} \imM_k \imM_j
    = \smallo(\immsum_n).
  \]
  Thus,
  \[
    \ims_n^2(y) = \immsum_n + \smallo(\immsum_n)
    + \sum_{k=1}^{n-1} \sum_{j=k+1}^{k+\beta_n} \mu\bigl( B(y,\imr_k(y))
    \cap T^{-(j-k)} B(y,\imr_j(y)) \bigr)
  \]
  uniformly in $y \in \supp\mu$.

  We now estimate
  $\var(\ims_n^2)
  = \int \ims_n^{4} \diff\mu - (\int \ims_n^2 \diff\mu)^2$.
  Let
  \[
    A_{k,j}(y) \coleq
    \mu\bigl( B(y,\imr_k(y)) \cap T^{-(j-k)} B(y,\imr_j(y)) \bigr).
  \]
  Then
  \begin{multline*}
    \ims_n^{4}(y) = \immsum_n^2
    + \smallo( \immsum_n^2 )
    + (2 \immsum_n + \smallo(\immsum_n))
    \sum_{k=1}^{n-1} \sum_{j=k+1}^{k+\beta_n} A_{k,j}(y) \\
    + \sum_{k,l=1}^{n-1} \sum_{j=k+1}^{k+\beta_n}
    \sum_{i=l+1}^{l+\beta_n} A_{k,j}(y) A_{l,i}(y)
  \end{multline*}
  and
  \begin{multline*}
    \biggl( \int_X \ims_n^{2} \diff\mu \biggr)^2
    = \immsum_n^2 + \smallo( \immsum_n^2 )
      + (2 \immsum_n + \smallo(\immsum_n)) \sum_{k=1}^{n-1}
      \sum_{j=k+1}^{k+\beta_n} \int_X A_{k,j}(y) \diff\mu(y) \\
    + \sum_{k,l=1}^{n-1} \sum_{j=k+1}^{k+\beta_n}
      \sum_{i=l+1}^{l+\beta_n} \int_X A_{k,j}(y) \diff\mu(y)
      \int_X A_{l,i}(z) \diff\mu(z).
  \end{multline*}
  Hence,
  \begin{multline}
  \label{eq:r-implicit:meas:var:s}
    \var(\ims_n^2)
    = \sum_{k,l,j,i} \int_X A_{k,j}(y) A_{l,i}(y) \diff\mu(y)
      - \int_X A_{k,j}(y) \diff\mu(y) \int_X A_{l,i}(z) \diff\mu(z) \\
    + \smallo(\immsum_n) \sum_{k=1}^{n-1} \sum_{j=k+1}^{k+\beta_n}
      \int_X A_{k,j}(y) \diff\mu(y) + \smallo( \immsum_n^2 ),
  \end{multline}
  where the first sum is taken over all $1 \leq k,l \leq n-1$, and all
  $k < j \leq k + \beta_n$ and $l < i \leq l + \beta_n$.
  Using the trivial estimate
  $A_{l,i}(y) \leq \mu( B(y,\imr_l(y)) ) = \imM_l$
  for all $l < i$, we have
  \[
  \begin{split}
    \sum_{l=1}^{n-1} \sum_{i=l+1}^{l+\beta_n}
      \int_X A_{k,j}(y) A_{l,i}(y) \diff\mu(y)
    &\leqs \beta_n \sum_{l=1}^{n-1} \imM_l \int_X A_{k,j}(y)
      \diff\mu(y) \\
    &= \beta_n \immsum_n \int_X A_{k,j}(y) \diff\mu(y).
  \end{split}
  \]
  Thus, by also ignoring the negative terms in
  \eqref{eq:r-implicit:meas:var:s}, we have
  \[
    \var(\ims_n^2) \leqs \beta_n \immsum_n \sum_{k=1}^{n-1}
    \sum_{j=k+1}^{k+\beta_n} \int_X A_{k,j}(y) \diff\mu(y)
    + \smallo( \immsum_n^2 ).
  \]

  We now show that the right-hand side can be bounded by
  $\smallo(\immsum_n^2)$. Let $s \in (0,1-\seqlow)$. Then
  \[
    \beta_n \immsum_n \sum_{k=1}^{n^{s}}
      \sum_{j=k+1}^{k+\beta_n} \int_X A_{k,j}(y) \diff\mu(y)
    \leqs n^{s} (\log n)^2 \immsum_n
    = \smallo(\immsum_n^2)
  \]
  since $\immsum_n \geqs n^{1-\seqlow}$ by the
  hypothesis~\ref{item:r-implicit:meas:4}. For $k > n^{s}$, we use
  Condition~\ref{cond:sr:implicit}, which implies
  (see \eqref{eq:var:explicit:l1:1:sr})
  \[
    \int_X A_{k,j}(y) \diff\mu(y) \leqs \imM_k^{1+\srimin}
  \]
  for all $0 < j-k \leq (\log k)^{\srilog}$, where
  $\srimin = \min \{\sriin, \sriout\}$ and $\srilog > 1$.
  Now, $k > n^{s}$ implies $\beta_n \eqs \log n \leqs \log k$. Thus,
  since $\log k \leq (\log k)^{\srilog}$, we have
  \[
    \beta_n \immsum_n \sum_{k=n^{s}+1}^{n-1}
    \sum_{j=k+1}^{k+\beta_n} \int_X A_{k,j}(y) \diff\mu(y)
    \leqs \immsum_n \sum_{k=1}^{n-1} \imM_k^{1+\srimin} (\log k)^2
    = \smallo(\immsum_n^2)
  \]
  as $\imM_k^{\srimin} \leqs (\log k)^{-\seqhigh}$ for some
  $\seqhigh > 2$ by the hypothesis~\ref{item:r-implicit:meas:4}.

  We have therefore shown
  \[
    \var(\ims_n^2) = \smallo(\immsum_n^2).
  \]
  Finally, $\imsig_n^2 \geqs \imesum_n$ by
  Lemma~\ref{lem:r-implicit:liminf:sigma}, and
  $\imesum_n \sim \immsum_n$ by
  Lemma~\ref{lem:r-implicit:meas}. Hence,
  \[
    \var(\ims_n^2) = \smallo(\imsig_n^{4}),
  \]
  which concludes the proof.
\end{proof}

\section{Proof of Lemma~\ref{lem:correlation}}
\label{sec:proof:supporting}
The proof utilises the quasi-partition from Lemma~\ref{lem:partition}
and decay of correlations.

\begin{proof}
  Fix $\alpha \in (0,1]$ and $p > 0$, and write $d_0 = \dim X$.
  Apply Lemma~\ref{lem:partition} to $\kappa_0 = 2p/\alpha$ to obtain
  the set $Y_n$, the partition $\partition_n$ of $X \setminus Y_n$, the
  densities $\{\rho_{Q,n}\}_{Q \in \partition_n}$ and the corresponding
  estimates. For each $Q \in \partition_n$, we may select a point
  $y_Q \in Q \cap \supp\mu$ since $Q$ has positive measure.

  \step[proof of part~\ref{item:lem:correlation:1}]
  \label{step:lem:correlation:1}
  Using the quasi-partition, we have, for all $\alpha$-H{\"o}lder
  continuous functions $g \colon X \times X \to [0,1]$ with
  $\holconst{g}{\alpha} \leq C_1 n^{p}$,
  \begin{equation}
  \label{eq:lemma:correlation:holder}
    | g(y_Q,x) - g(y,x) | \leq C_1 n^{p} \diam{\partition_n}^{\alpha}
    \leqs n^{-p}
    \quad \text{for all } x \in X, y \in Q.
  \end{equation}
  Furthermore, since
  $\mu(Y_n) \leqs n^{-2p/\alpha} \leq n^{-p}$, we have
  \[
  \begin{split}
    \int_X g(x,T^{k}x) \diff\mu(x)
    &= \sum_{Q \in \partition_n} \int_X \charfun_Q(x) g(x,T^{k}x)
      \diff\mu(x) + \bigo(n^{-p}) \\
    &= \sum_{Q \in \partition_n} 
      \int_X \charfun_Q(x) g(y_Q,T^{k}x) \diff\mu(x) + \bigo(n^{-p})
  \end{split}
  \]
  Using the densities $\rho_{Q,n}$, which satisfy
  $\lnorm{\rho_{Q,n}-\mu(Q)^{-1} \charfun_Q}{1} \leqs n^{-2p/\alpha}
  \leqs n^{-p}$,
  we have
  \[
    \int_X \mu(Q)^{-1} \charfun_Q(x) g(y_Q,T^{k}x) \diff\mu(x)
    = \int_X \rho_{Q,n}(x) g(y_Q,T^{k}x) \diff\mu(x) \\
    + \bigo(n^{-p}).
  \]
  Hence,
  \begin{equation}
  \label{eq:lemma:correlation}
    \int_X g(x,T^{k}x) \diff\mu(x) = \sum_{Q \in \partition_n} \mu(Q)
    \int_X \rho_{Q,n}(x) g(y_Q,T^{k}x) \diff\mu(x) + \bigo(n^{-p}).
  \end{equation}

  Using Condition~\ref{cond:doc},
  $\lipnorm{\rho_{Q,n}} \leqs n^{4(d_0+2)p/\alpha}$
  and $\lnorm{\rho_{Q,n}}{1} = 1 + \bigo(n^{-p})$, we have
  \[
  \begin{split}
    \int_X \rho_{Q,n}(x) g( & y_Q,T^{k}x) \diff\mu(x) \\
    &= \int_X \rho_{Q,n} \diff\mu \int_X g(y_Q,x) \diff\mu(x)
    + \bigo(\holnorm{\rho_{Q,n}}{\alpha} \holnorm{g}{\alpha}
    e^{-\doc k}) \\
    &= \int_X g(y_Q,x) \diff\mu(x)
      + \bigo(n^{-p} + n^{4(d_0+2)p/\alpha} \cdot n^{p} e^{-\doc k}).
  \end{split}
  \]
  In view of \eqref{eq:lemma:correlation}, we therefore have
  \begin{multline*}
    \int_X g(x,T^{k}x) \diff\mu(x) = \sum_{Q \in \partition_n} \mu(Q)
    \int_X g(y_Q,x) \diff\mu(x) \\
      + \bigo(n^{-p} + n^{4(d_0+3)p/\alpha} e^{-\doc k}).
  \end{multline*}
  Using \eqref{eq:lemma:correlation:holder} again, we obtain that
  \[
    \Bigl| \mu(Q) \int_X g(y_Q,x) \diff\mu(x)
    - \int_Q \int_X g(y,x) \diff\mu(x) \diff\mu(y) \Bigr|
    \leqs n^{-p}
  \]
  Therefore,
  \begin{multline*}
    \int_X g(x,T^{k}x) \diff\mu(x) =
    \int_{X \setminus Y_n} \int_X g(y,x) \diff\mu(x) \diff\mu(y)  \\
      + \bigo(n^{-p} + n^{4(d_0+3)p/\alpha} e^{-\doc k}).
  \end{multline*}
  Using $\mu(Y_n) \leqs n^{-p}$, we therefore have
  \begin{multline*}
    \int_X g(x,T^{k}x) \diff\mu(x)
    = \iint g(y,x) \diff\mu(x) \diff\mu(y) \\
      + \bigo(n^{-p} + n^{4(d_0+3)p/\alpha} e^{-\doc k}).
  \end{multline*}
  Since none of the suppressed constants depends on $g$,
  we have thus established part~\ref{item:lem:correlation:1}.

  \step[proof of part~\ref{item:lem:correlation:2}]
  We now assume that Condition~\ref{cond:multiple-doc} holds.
  In the same way \eqref{eq:lemma:correlation} was established, we
  obtain
  \begin{multline}
  \label{eq:lem:correlation:2}
    \int_X g(x,T^{k}x) h(x,T^{j}x) \diff\mu(x) \\
    = \sum_{Q \in \partition_n} \mu(Q) \int_X \rho_{Q,n}(x)
      g(y_Q,T^{k}x) h(y_Q,T^{j}x) \diff\mu(x) +
      \bigo(n^{-p}).
  \end{multline}
  Using Condition~\ref{cond:multiple-doc},
  \[
  \begin{split}
    \int_X \rho_{Q,n}(x) g(y_Q,T^{k}x) h(y_Q, & T^{j}x)
    \diff\mu(x) \\
    &= \int_X \rho_{Q,n} \diff\mu \int_X g(y_Q,T^{k}x)
      h(y_Q,T^{j}x) \diff\mu(x) \\
    &\quad + \bigo(\holnorm{\rho_{Q,n}}{\alpha}
    \holnorm{g}{\alpha} \holnorm{h}{\alpha} e^{-\mdoc k}).
  \end{split}
  \]
  Since $\holnorm{g}{\alpha},\holnorm{h}{\alpha} \leqs n^{p}$,
  $\lipnorm{\rho_{Q,n}} \leqs n^{4(d_0+2)p/\alpha}$ and
  $\mu(\rho_{Q,n}) = 1 + \bigo(n^{-p})$, we have
  \begin{multline*}
    \int_X \rho_{Q,n}(x) g(y_Q,T^{k}x) h(y_Q, T^{j}x)
    \diff\mu(x)
    = \int_X g(y_Q,T^{k}x) h(y_Q,T^{j}x) \diff\mu(x) \\
    + \bigo(n^{-p} + n^{4(d_0+3)p/\alpha} e^{-\mdoc k}).
  \end{multline*}
  In view of \eqref{eq:lem:correlation:2}, we therefore have
  \begin{multline*}
    \int_X g(x,T^{k}x) h(x,T^{j}x) \diff\mu(x) \\
    = \sum_{Q \in \partition_n} \mu(Q) \int_X g(y_Q,T^{k}x)
    h(y_Q,T^{j}x) \diff\mu(x) + \bigo(n^{-p} + n^{4(d_0+3)p/\alpha}
    e^{-\mdoc k}),
  \end{multline*}
  Following Step~\ref{step:lem:correlation:1}, we obtain
  \begin{multline*}
    \int_X g(x,T^{k}x)h(x,T^{j}x) \diff\mu(x)
    = \iint g(y,T^{k}x)h(y,T^{j}x) \diff\mu(x)
      \diff\mu(y) \\
    + \bigo(n^{-p} + n^{4(d_0+3)p/\alpha} e^{-\mdoc k}),
  \end{multline*}
  as desired.
\end{proof}

\section{Verification of short returns estimates}
\label{sec:sr:proof}
We devote this section to verifying Conditions~\ref{cond:sr:explicit}
and \ref{cond:sr:implicit} for expanding maps on the interval, and
hyperbolic toral automorphisms. In particular, we verify the relevant
conditions for Examples~\ref{ex:gauss}--\ref{ex:tripling}.
We begin with verifying
\ref{cond:sr:explicit} by slightly modifying the usual approach
by Collet~\cite{collet-2001-statistics}. We include the proof for the
purpose of completeness, and since it is short.
For \ref{cond:sr:explicit}, we modify the approach of Holland, Rabassa
and Sterk~\cite[\text{\S}6.2]{holland-2016-quantitative}, who adapt
Collet's approach to account for singular measures.

\subsection{Proof of
  Condition~\texorpdfstring{\ref{cond:sr:explicit}}{SR} for selected
systems}
\label{sec:srt:proof:explicit}
For this section, we assume that $d\mu = h \diff\leb$ for a density
$h \in \Ell{q}(\leb)$ such that $h \geq c > 0$ for some
$q > 1$ and $c > 0$.
Recall that $\exAv_k = \int \mu(B(x,r_k)) \diff\mu(x)$ and note that
the conditions on $h$ imply
\begin{equation}
\label{eq:rk-Mk}
  r_k^{\dim X} \eqs \leb(B(x,r_k)) \leqs \exAv_k
  \leqs \leb(B(x,r_k))^{1/p} \eqs r_k^{\dim X/p},
\end{equation}
where $p$ is the H{\"o}lder conjugate of $q$.
We also assume the following condition. There exists
$\lsre > 0$ such that, for all $r > 0$,
\begin{equation}
\label{eq:r-explicit:Ekl}
  \mu \{x \in X : d(T^{l}x,x) < r\} \leqs r^{\lsre}
  \quad \text{for all } l \geq 1.
\end{equation}
We state the following result for $\dim X = 1$ for simplicity.

\begin{lemma}
\label{lem:sr:r-explicit}
  In addition to the above assumptions, assume
  $r_k \leqs (\log k)^{-\seqhigh}$ for some
  $\seqhigh > p/\lsre$.
  Then, there exist $\srelog > 1$ and $\srein,\sreout > 0$ such that,
  for all $k \geq 1$ and $l = 1,\ldots,(\log k)^{\srelog}$,
  \[
    \mu \bigl\{x \in X : \mu\bigl( B(x,r_k) \cap T^{-l} B(x,r_k) \bigr)
    > \exAv_k^{1+\srein} \bigr\} \leqs \exAv_k^{\sreout}.
  \]
\end{lemma}

Once we prove Lemma~\ref{lem:sr:r-explicit}, all that remains is to
verify \eqref{eq:r-explicit:Ekl} for the systems we consider.
This has already been done in
\cite{kirsebom-2023-shrinking,holland-2012-extreme}.
More precisely, see \cite[Section~6.2]{kirsebom-2023-shrinking} for 
hyperbolic toral automorphisms equipped with the Lebesgue measure,
and see \cite[Lemmas~3.4, 3.10]{holland-2012-extreme} for
Gibbs--Markov maps and non-uniformly expanding interval maps equipped
with absolutely continuous measures.
In particular, Lemma~\ref{lem:sr:r-explicit} implies that
Examples~\ref{ex:gauss} and \ref{ex:cat} satisfy \eqref{alt} assuming
Lemma~\ref{lem:sr:r-explicit}.

The proof Lemma~\ref{lem:sr:r-explicit} is a slight modification of the
standard argument. We include it since it is short and to serve as a
comparison to the verification of Condition~\ref{cond:sr:implicit}.

\begin{proof}[Proof of Lemma~\ref{lem:sr:r-explicit}]
  Fix $\srelog \in (1, \lsre \seqhigh/p)$.
  Let $\hat{E}_{k,l} = \{x : d(T^{l}x,x) < 2r_k\}$ and notice that
  $\mu(\hat{E}_{k,l}) \leqs r_k^{\lsre} \leqs \exAv_k^{\lsre}$ 
  by \eqref{eq:r-explicit:Ekl} and then \eqref{eq:rk-Mk}. Let
  \[
    \tilde{E}_k \coleq \{x \in X : d(T^{l}x,x) \leq 2 r_k
    \text{ for some } l=1,\ldots,(\log k)^{\srelog} \}
  \]
  and notice that
  $\tilde{E}_k
  \subset \bigcup_{j=1}^{(\log k)^{\srelog}} \hat{E}_{k,l}$.
  Hence, $\mu(\tilde{E}_k) \leqs \exAv_k^{\lsre'}$ for some
  $0 < \lsre' < \lsre - p\srelog/\seqhigh$, where we have used
  $\srelog < \lsre\seqhigh/p$, and
  $\exAv_k \leqs (\log k)^{-\seqhigh/p}$ by \eqref{eq:rk-Mk}.
  Let $\srein < \lsre'$ and define
  \[
    F_k \coleq \bigl\{x \in X : \mu\bigl( B(x,r_k) \cap \tilde{E}_k
      \bigr)
    > \exAv_k^{1+\srein} \bigr\}.
  \]

  Now, define the Hardy--Littlewood maximal function $\hlmax_k$ for
  $\charfun_{\tilde{E}_k}h$ by
  \[
    \hlmax_k(x) \coleq \sup_{r > 0} \frac{1}{\leb(B(x,r))}
    \int_{B(x,r)} \charfun_{\tilde{E}_k} h \diff\leb.
  \]
  Notice that if $x \in F_k$, then
  $\hlmax_k(x) \geqs \leb(B(x,r_k))^{-1} \exAv_k^{1+\srein}
  > C_0 \exAv_k^{\srein}$
  for some constant $C_0 > 0$ by \eqref{eq:rk-Mk}.
  A theorem of Hardy and Littlewood
  (e.g., see \cite[Theorem~1]{stein-1970-singular}) states that
  \[
    \leb \{x : \hlmax_k(x) > c \} \leqs
    \frac{\normsp{\charfun_{\tilde{E}_k}h}{\Ell{1}(\leb)}}{c}
    = \frac{\mu(\tilde{E}_k)}{c}.
  \]
  Consequently,
  \[
    \leb(F_k)
    \leq m \{x : \hlmax_k(x) > C_0 \exAv_k^{\srein} \}
    \leqs \mu(\tilde{E}_k) \exAv_k^{-\srein}
    \leqs \exAv_k^{\lsre' - \srein}
  \]
  and $\mu(F_k) \leqs \leb(F_k)^{1/p} \leqs \exAv_k^{\sreout}$
  for $\sreout = (\lsre' - \srein)/p > 0$.
  Observe that, for all $k \geq 1$ and
  $l = 1,\ldots, (\log k)^{\srelog}$,
  \[
    B(x,r_k) \cap T^{-l}B(x,r_k) \subset B(x,r_k) \cap \tilde{E}_k.
  \]
  Indeed if $y \in B(x,r_k) \cap T^{-l} B(x,r_k)$, then
  $y \in T^{-l}B(y,2r_k)$ and $y \in B(x,r_k) \cap \tilde{E}_k$. Hence,
  \[
    \mu \bigl\{x \in X : \mu\bigl( B(x,r_k) \cap T^{-l} B(x,r_k) \bigr)
    > \exAv_k^{1+\srein} \bigr\}
    \leq \mu(F_k) \leqs \exAv_k^{\sreout},
  \]
  which concludes the proof.
\end{proof}

\subsection{Proof of
  Condition~\texorpdfstring{\ref{cond:sr:implicit}}{SR} for selected
systems}
\label{sec:srt:proof:implicit}
We now verify \ref{cond:sr:implicit} under an analogous `short returns'
estimate to \eqref{eq:r-explicit:Ekl}. Assuming this corresponding
estimate, we will only need that $X = [0,1]$ and $\mu$ is a non-atomic
probability measure. We will then restrict to Gibbs--Markov systems
when verifying this condition.

For a sequence $(M_k) \subset (0,1]$ define $r_k^{L}$ and $r_k^{R}$
such that the intervals $L_k(x) \coleq (x-r_k^{L}(x), x]$ and
$R_k(x) \coleq [x, x+r_k^{R}(x))$ both have measure $M_k$ for
$\mu$-a.e.\ $x \in X$ and all $k \geq 1$. The same argument used to
show that $r_k$ are well-defined $\mu$-a.e.\ and $\lipconst{r_k} = 1$
applies to $r_k^{L}$ and $r_k^{R}$. Finally, define
\[
  E_{k,l} = E_{k,l}(M_k)
  \coleq \{x \in X : T^{l}x \in L_k(x) \cup R_k(x) \}.
\]
The reason for this particular form of $E_{k,l}$ will be made clear in
the proof of the lemma.

\begin{lemma}
\label{lem:sr-implicit}
  Suppose that there exist $C,\lsri > 0$ such that, for all
  sequences $(M_k) \subset (0,1]$ and all $k, l \geq 1$,
  \begin{equation}
  \label{eq:sr-implicit:Ekl:2}
    \mu(E_{k,l}) \leq C M_k^{\lsri}.
  \end{equation}
  Assume now that $(M_k)$ is a sequence such that
  $M_k \leqs (\log k)^{-\seqhigh}$ for some
  $\seqhigh > \lsri^{-1}$.
  Then there exist $\srilog > 1$ and $\sriin,\sriout > 0$ such that, for
  all $k \geq 1$ and $l = 1, \ldots, (\log k)^{\srilog}$,
  \[
    \mu \bigl\{x \in X : \mu\bigl(B(x,r_k(x)) \cap T^{-l} B(x,r_k(x))
    \bigr) > M_k^{1 + \sriin}\bigr\}
    \leqs M_k^{\sriout}.
  \]
\end{lemma}

The proof of Lemma~\ref{lem:sr-implicit} adapts the argument in
\cite[\text{\S}6.2]{holland-2016-quantitative}, which modifies
Collet's approach for measures that are possibly singular with
respect to the Lebesgue measure.

\begin{proof}
  Fix $\srilog \in (1,\lsri\seqhigh)$, let
  \[
    \tilde{E}_k = \{x \in X : d(T^{l}x,x) \in L_k(x) \cup R_k(x)
    \text{ for some } j=1,\ldots,(\log k)^{\srilog} \},
  \]
  and notice that
  $\tilde{E}_k \subset \bigcup_{j=1}^{(\log k)^{\srilog}} E_{k,l}$.
  Hence, by \eqref{eq:sr-implicit:Ekl:2},
  $\mu(\tilde{E}_k) \leqs M_k^{\lsri'}$ for some
  $0 < \lsri' < \lsri - \srilog/\seqhigh$.
  Let $\sriin < \lsri'$ and define
  \[
    F_k \coleq \bigl\{x \in X : \mu\bigl( B(x,r_k(x)) \cap \tilde{E}_k
      \bigr)
    > M_k^{1+\sriin} \bigr\}.
  \]

  Define the maximal function $\hlmax_k$ for $\charfun_{\tilde{E}_k}$
  with respect to the measure $\mu$ by
  \[
    \hlmax_k(x) = \sup_{r > 0} \frac{1}{\mu(B(x,r))}
    \int_{B(x,r)} \charfun_{\tilde{E}_k} \diff\mu.
  \]
  Notice that if $x \in F_k$, then
  $\hlmax_k \geqs \mu(B(x,r_k(x)))^{-1} M_k^{1+\sriin}
  > C_0 M_k^{\sriin}$
  for some constant $C_0 > 0$ since $\mu(B(x,r_k(x))) = M_k$.
  By Fefferman~\cite{fefferman-1981-strong}, we have
  \[
    \mu \{x : \hlmax_k(x) > c \} \leqs
    \frac{\normsp{\charfun_{E_k}}{\Ell{1}(\mu)}}{c}.
  \]
  As a consequence,
  \[
    \mu(F_k) \leq \mu \{x : \hlmax_k(x) > C_0 M_k^{\sriin} \}
    \leqs \mu(\tilde{E}_k) M_k^{-\sriin}
    \leqs M_k^{\lsri' - \sriin}.
  \]

  Let $\sriout = \lsri' - \sriin > 0$. We will now show
  \[
    B(x,r_k(x)) \cap T^{-l} B(x,r_k(x))
    \subset B(x,r_k(x)) \cap \tilde{E}_k.
  \]
  Let $y \in B(x,r_k(x)) \cap T^{-l}B(x,r_k(x))$. Then
  $y,T^{l}y \in B(x,r_k(x))$. The potentially problematic case is when
  $y$ and $T^{l}y$ appear on different sides of $x$, say
  $y \in (x-r_k(x),x]$ and $T^{l}y \in [x, x+r_k(x))$ (the other case
  is treated similarly). Since $B(x,r_k(x))$ and
  $R_k(y) = [y,y+r_k^{R}(y))$ have the same measure and $y \in
  B(x,r_k(x))$, it follows that $[x, x + r_k(x)) \subset R_k(y)$ (see
  Figure~\ref{fig:interval:3}).
  Hence, $T^{l}y \in R_k(y)$ and $y \in B(x,r_k(x)) \cap \tilde{E}_k$.
  In conclusion,
  \[
    \mu \bigl\{x \in X : \mu\bigl( B(x,r_k) \cap T^{-l} B(x,r_k) \bigr)
    > M_k^{1+\sriin} \bigr\}
    \leq \mu(F_k) \leqs M_k^{\sriout},
  \]
  as required.
\end{proof}

\begin{figure}[ht]
  \centering
  \begin{tikzpicture}
  \draw (-5.5,0) -- (5.5,0);
  \draw (0,0) node[below=4pt] {$x$};
  \ptmark{0};
  \draw (-2.5,0) node[below=4pt] {$y$};
  \ptmark{-2.5};
  \draw (3.5,0) node[below=4pt] {$T^{l}y$};
  \ptmark{3.5};
  \draw[|-|] (-2.5,-0.9) --node[below=0pt] {$r_k^{R}(y)$}
    (5.0,-0.9);
  \draw[|-|] (0,0.4) --node[above=0pt] {$r_k(x)$} (4.0,0.4);
  \draw[|-|] (-4.0,0.4) --node[above=0pt] {$r_k(x)$} (0,0.4);
\end{tikzpicture}
  \caption{The intervals $B(x,r_k(x))$ and $R_k(y)$.}
  \label{fig:interval:3}
\end{figure}

All that remains in order to verify Condition~\ref{cond:sr:implicit}
for the systems in Examples~\ref{ex:gauss} and \ref{ex:tripling} is to
verify \eqref{eq:sr-implicit:Ekl:2}. We assume the following.
Suppose that $T \colon [0,1] \to [0,1]$ is a piecewise $C^{1}$
expanding map and that there exists a partition
$\{C_j\}_{j=0}^{\infty}$ of $[0,1]$ (up to an $\leb$-null set) such
that $T|_{C_j} \colon C_j \to [0,1]$ is a bijection for each $j$.
Define the cylinder sets
\[
  C_{j_0,\ldots,j_n} = \bigcap_{i=0}^{n} T^{-i} C_{j_i}.
\]
Suppose further that $\mu$ is a Gibbs measure for a H{\"o}lder
continuous potential $\varphi$ (for existence see
\cite{bowen-2008-equilibrium} for the finite branch setting and
\cite{sarig-2003-existence} for the countable branch setting).
Then $\mu \ll \nu$ for a conformal measure $\nu$ with density $h$
satisfying $c^{-1} \leq h \leq c$ for some $c > 0$. We can assume that
the pressure of $\varphi$ is zero. Thus, there exists $D > 0$ such
that, for all $j_0,\ldots,j_{n-1}$ and $x \in C_{j_0,\ldots,j_{n-1}}$,
\[
  D^{-1} \leq \frac{\mu(C_{j_0,\ldots,j_{n-1}})}{\exp( \varphi_n(x) )}
  \leq D,
\]
where $\varphi_n = \sum_{k=0}^{n-1} \varphi \circ T^{k}$.

\begin{lemma}
\label{lem:sr:implicit:Ekl}
  In addition to the above assumptions, suppose that $|T'| \geq 3$.
  Then there exists $C > 0$ such that, for all sequences
  $(M_k) \subset (0,1]$ and all $k,l \geq 1$,
  \begin{equation}
  \label{eq:sr-implicit:Ekl}
    \mu(E_{k,l}) \leq CM_k.
  \end{equation}
\end{lemma}

\begin{remark}
  In fact, an analysis of the following proof of
  Lemma~\ref{lem:sr:implicit:Ekl} makes it clear that we can relax the
  conditions at the cost of a more complicated formulation. In
  particular, we can alter the hypotheses in the following two ways.
  \begin{enumerate}[label=(\roman*)]
    \item\label{item:rem:sr-implicit:Ekl:1}
      The conclusion holds for all $l \geq 1$ such that
      $|(T^{l})'| \geq 3$. Furthermore, one can relax the condition
      that $|(T^{l})'(x)| \geq 3$ to $(T^{l})'(x) \geq 3$ for all $x$
      such that $(T^{l})'(x) > 0$, with no condition on the magnitude
      of the derivative whenever it is negative. 
    \item\label{item:rem:sr-implicit:Ekl:2}
      Assume that there exist $C_1,C_2,\lsri > 0$ such that, for all
      $x \in X$, there exist $s_2(x) \leq s_1(x)$ with
      $s_2(x)/s_1(x) \geq \lsri$ and, for all $r > 0$,
      \[
        C_1r^{s_1(x)} \leq \mu(B(x,r)) \leq C_2r^{s_2(x)}.
      \]
      Then one can replace the condition $|(T^{l})'| \geq 3$ in
      Lemma~\ref{lem:sr:implicit:Ekl} with $|(T^{l})'| \geq 2$ and
      instead obtain that
      \[
        \mu(E_{k,l}) \leqs M_k^{\lsri}.
      \]
  \end{enumerate}
  The version in \ref{item:rem:sr-implicit:Ekl:1} allows us to conclude
  \eqref{eq:sr-implicit:Ekl} for the Gauss map and $\times n$ maps (see
  Examples~\ref{ex:gauss} and \ref{ex:tripling}).
  The version in \ref{item:rem:sr-implicit:Ekl:2} allows us to conclude
  \eqref{eq:sr-implicit:Ekl} for the doubling map for appropriate Gibbs
  measures (such as $\leb$).
\end{remark}

Before proving Lemma~\ref{lem:sr:implicit:Ekl}, we show that it can be
used to establish \eqref{eq:r-explicit:Ekl} for \emph{any} Gibbs
measure satisfying Condition~\ref{cond:frostman}. In particular,
for any finitely-branched expanding interval map,
\eqref{eq:r-explicit:Ekl} holds for any Gibbs measures corresponding to
a H{\"o}lder continuous potential.
To the author's knowledge, this is the first such result for
non-absolutely continuous Gibbs measures for expanding interval
systems.

\begin{corollary}
  In addition to the above assumptions on $([0,1], T, \mu)$,
  suppose $\mu$ satisfies Condition~\ref{cond:frostman}. Then there
  exists $c,\lsre > 0$ such that
  \[
    \mu \{x \in X : d(T^{l}x,x) < r\} \leq c r^{\lsre}
    \quad \text{for all } r > 0,\ l \geq 1.
  \]
\end{corollary}

\begin{proof}
  Assume that $\mu$ satisfies Condition~\ref{cond:frostman}.
  That is, there exist $C_0,\frost > 0$ such that, for any sequence
  $(M_k)$, we have $M_k \leq C_0 r_k^{L}(x)^{\frost}$ and
  $M_k \leq C_0 r_k^{R}(x)^{\frost}$
  for all $x \in [0,1]$ and $k \geq 1$.
  For $r > 0$ sufficiently small, let $M_k = C_0 r^{\frost}$ for all
  $k \geq 1$. Then, for all $k,l \geq 1$,
  \[
    \mu \{x : d(T^{l}x,x) < r \}
    = \mu \{x : d(T^{l}x,x) \in L_k(x) \cup R_k(x)\}
    \leq C M_k \leqs r^{\frost},
  \]
  where the implicit constant is universal. Letting $\lsre = \frost$,
  we conclude the proof.
\end{proof}

We now prove Lemma~\ref{lem:sr:implicit:Ekl}. We adapt the arguments
used in the proof of \cite[Lemma~6]{persson-2026-strong}.

\begin{proof}[Proof of Lemma~\ref{lem:sr:implicit:Ekl}]
  Fix $k,l \geq 1$ and $j_0,\ldots,j_{l-1}$, and let
  $I = I_{j_0,\ldots,j_{l-1}}
  \coleq E_{k,l} \cap C_{j_0,\ldots,j_{l-1}}$.
  Let $p$ denote the unique periodic point $p \in I$ of period $l$.
  By the continuity of $T|_{C_{j_0,\ldots,j_{l-1}}}$, we have
  $I = (a, b)$ for some $a < p < b$.
  We will first establish $\mu(T^{l}I) \leqs M_k$, where the
  implicit constant is independent of the sequence $(M_k)$.
  We consider two cases.

  \case[$(T^{l})' > 0$ on $C_{j_0,\ldots,j_{l-1}}$]
  Since $T$ is expanding with positive derivative,
  $T^{l}a < a < p < b < T^{l}b$ and $T^{l}I = (T^{l}a,T^{l}b)$.
  We focus on estimating $\mu( (T^{l}a,p] )$ since the corresponding
  argument for $\mu( [p,T^{l}b) )$ is similar.
  By the continuity of $r_k^{L}$, we have
  $d(T^{l}a,a) = r_k^{L}(a)$ and
  \[
    d(T^{l}a,p) = d(T^{l}a, a) + d(a,p) = r_k^{L}(a) + d(a,p).
  \]
  At the same time, since $(T^{l})'$ is continuous, there exists
  $a < \xi < p$ such that
  \[
    d(T^{l}a,p) = d(T^{l}a,T^{l}p) = (T^{l})'(\xi) d(a,p).
  \]
  As a consequence,
  $((T^{l})'(\xi) - 1)d(a,p) = r_k^{L}(a) \leq r_k^{L}(p) + d(a,p)$,
  where we have used $\lipconst{r_k^{L}} = 1$. Therefore
  \[
    ((T^{l})'(\xi) - 2)d(a,p) \leq r_k^{L}(p).
  \]
  Using $(T^{l})' \geq 3$, we have $d(a,p) \leq r_k^{L}(p)$
  and therefore (see Figure~\ref{fig:interval:1})
  \[
    (T^{l}a,p] = (T^{l}a, a] \cup (a,p] \subset L_k(a) \cup L_k(p)
  \]
  and $\mu((T^{l}a,p]) \leq 2 M_k$.
  \begin{figure}[ht]
    \centering
     \begin{tikzpicture}
   \draw (-5.5,0) -- (5.5,0);
   \draw (0,0) node[below=4pt] {$a$};
   \ptmark{0};
   \draw (-3.0,0) node[below=4pt] {$T^{l}a$};
   \ptmark{-3.0};
   \draw (3.5,0) node[below=4pt] {$p$};
   \ptmark{3.5};
   \draw[|-|] (-0.5,0.8) --node[above=0pt] {$r_k^{L}(p)$} (3.5,0.8);
   \draw[|-|] (-3.0,0.4) --node[above=0pt] {$r_k^{L}(a)$} (0,0.4);
 \end{tikzpicture}
    \caption{The positions of $p,a$ and $T^{l}a$ when $(T^{l})' > 0$.}
    \label{fig:interval:1}
  \end{figure}

  \case[$(T^{l})' < 0$ on $C_{j_0,\ldots,j_{l-1}}$]
  In this case, $T^{l}I = (T^{l}b, T^{l}a)$, where
  $T^{l}a = a + r_k^{R}(a)$ and $T^{l}b = b - r_k^{L}(b)$.
  This implies (see Figure~\ref{fig:interval:2})
  \[
    T^{l}I = (T^{l}b, T^{l}a) \subset (T^{l}b, b) \cup (a, T^{l}a)
    = L_k(b) \cup R_k(a)
  \]
  and $\mu(T^{l}I) \leq 2 M_k$.
  \begin{figure}[ht]
    \centering
    \begin{tikzpicture}
  \draw (-5.5,0) -- (5.5,0);
  \draw (0,0) node[below=4pt] {$p$};
  \ptmark{0};
  \draw (-2.0,0) node[below=4pt] {$a$};
  \ptmark{-2.0};
  \draw (2.5,0) node[below=4pt] {$b$};
  \ptmark{2.5};
  \draw (-4.5,0) node[below=4pt] {$T^{l}b$};
  \ptmark{-4.5};
  \draw (4.0,0) node[below=4pt] {$T^{l}a$};
  \ptmark{4.0};
  \draw[|-|] (-4.5,-0.8) --node[below=0pt] {$r_k^{L}(b)$}
    (2.5,-0.8);
  \draw[|-|] (-2.0,0.4) --node[above=0pt] {$r_k^{R}(a)$}
    (4.0,0.4);
\end{tikzpicture}
    \caption{The interval $T^{l}I$ when $(T^{l})' < 0$.}
    \label{fig:interval:2}
  \end{figure}

  We now prove that $\mu(E_{k,l}) \leqs M_k$.
  Using that $\nu$ is conformal and $\mu$ is a Gibbs measure, we have
  \[
    \nu(T^{l}I_{j_0,\ldots,j_{l-1}}) = \int_{I_{j_0,\ldots,j_{l-1}}}
    e^{-\varphi_l(x)} \diff\nu(x)
    \geqs \mu(C_{j_0},\ldots,j_{l-1})^{-1} \nu(I_{j_0,\ldots,j_{l-1}}).
  \]
  Consequently, using that $d\mu = h \diff\nu$ with
  $c^{-1} \leq h \leq c$, we have
  \[
  \begin{split}
    \mu(I_{j_0,\ldots,j_{l-1}})
    &\leqs \nu(I_{j_0,\ldots,j_{l-1}}) \\
    &\leqs \nu(T^{l}I_{j_0,\ldots,j_{l-1}}) \mu(C_{j_0,\ldots,j_{l-1}})
    \\
    &\leqs \mu(T^{l}I_{j_0,\ldots,j_{l-1}}) \mu(C_{j_0,\ldots,j_{l-1}})
    \\
    &\leqs M_k\mu(C_{j_0,\ldots,j_{l-1}}).
  \end{split}
  \]
  We conclude the proof by summing over $j_0,\ldots,j_{l-1}$.
\end{proof}

\subsection*{Acknowledgements}
I am deeply grateful to my supervisor, Tomas Persson, for the
insightful discussions and his careful review of numerous drafts, and
in particular for the idea of using the left and right intervals, $L_k$
and $R_k$, in Section~\ref{sec:srt:proof:implicit}.


\begin{thebibliography}{00}

\bibitem{barreira-2001-hausdorff} L.~Barreira, B.~Saussol,
  \emph{Hausdorff dimension of measures via Poincar\'e recurrence}, 
  Comm. Math. Phys. 219 (2001), no. 2, 443–463. 

\bibitem{bayraktar-2018-almost} T.~Bayraktar,
  \emph{Almost sure invariance principle for non-autonomous holomorphic
  dynamics in $\mathbb{P}^{k}$},
  Conform. Geom. Dyn. \textbf{22} (2018), 45--61.

\bibitem{bjorklund-2020-quantitative} M.~Bj{\"o}rklund, M.~Einsiedler,
    A.~Gorodnik,
  \emph{Quantitative multiple mixing},
  J. Eur. Math. Soc. (JEMS) \textbf{22} (2020), no.~5, 1475--1529.

\bibitem{bjorklund-2020-central} M.~Bj{\"o}rklund, A.~Gorodnik,
  \emph{Central limit theorems for group actions which are
  exponentially mixing of all orders},
  J. Anal. Math. \textbf{141} (2020), no.~2, 457--482.

\bibitem{boshernitzan-1993-quantitative} M.~D.~Boshernitzan,
  \emph{Quantitative recurrence results},
  \textit{Invent. Math.} \textbf{133} (1993), no.~3 617--631.

\bibitem{bowen-2008-equilibrium} R.~Bowen,
  \emph{Equilibrium states and the Ergodic Theory of {{Anosov}}
    Diffeomorphisms},
  Number 470 in Lecture Notes in Mathematics, {Springer}, {Berlin}, 2nd
    rev. ed edition, 2008.

\bibitem{castro-2019-stability} A.~Castro, F.~B.~Rodrigues, P.~Varandas,
  \emph{Stability and limit theorems for sequences of uniformly
  hyperbolic dynamics},
  J. Math. Anal. Appl. \textbf{480} (2019), no.~2, 123426, 20 pp.

\bibitem{chen-2018-non-stationary} J.~Chen, Y.~Yang, H.-K.~Zhang,
  \emph{Non-stationary almost sure invariance principle for hyperbolic
  systems with singularities},
  J. Stat. Phys. \textbf{172} (2018), no.~6, 1499--1524.

\bibitem{chernov-1995-limit} N.~Chernov,
  \emph{Limit theorems and Markov approximations for chaotic dynamical
  systems},
  Probab. Theory Related Fields \textbf{101} (1995), no.~3, 321--362.

\bibitem{chernov-2001-dynamical} N.~Chernov, D.~Kleinbock,
  \emph{Dynamical {{Borel--Cantelli}} lemmas for Gibbs measures},
  Israel J. Math. 122 (2001), 1--27.

\bibitem{chernov-2006-chaotic} N.~Chernov, R.~Markarian,
  \emph{Chaotic Billiards},
  Number 127 in Mathematical Surveys and Monographs,
  {American Mathematical Society}, {Providence, RI},
  2006.

\bibitem{collet-2001-statistics} P.~Collet,
  \emph{Statistics of closest return for some non-uniformly hyperbolic
  systems},
  Ergodic Theory Dynam. Systems \textbf{21} (2001), no.~2, 401--420.

\bibitem{conze-2011-limit} J.-P.~Conze, S.~Le~Borgne,
  \emph{Limit law for some modified ergodic sums},
  Stoch. Dyn. \textbf{11} (2011), no.~1, 107--133.

\bibitem{conze-2007-limit} J.-P.~Conze, A.~Raugi,
  \emph{Limit theorems for sequential expanding dynamical systems on
  $[0,1]$}, in Ergodic theory and related fields, 89--121,
  \textbf{430}, Amer. Math. Soc., Providence, RI, 2007.
  (2007).

\bibitem{cuny-2015-strong} C.~Cuny, F.~Merlev\`ede,
  \emph{Strong invariance principles with rate for ``reverse''
  martingale differences and applications},
  J. Theoret. Probab. \textbf{28} (2015), no.~1, 137--183.

\bibitem{demers-2025-central} M.-F.~Demers, C.~Liverani
  \emph{Central limit theorem for sequential dynamical systems},
  2025. (preprint)
  \href{https://arxiv.org/abs/2502.07765v2}{arxiv:2502.07765}.

\bibitem{denker-2017-lindeberg} M.~Denker, S.~Senti, X.~Zhang,
  \emph{Lindeberg theorem for Gibbs--Markov dynamics},
  Nonlinearity \textbf{30} (2017), no.~12, 4587--4613.

\bibitem{denker-1984-approximation} M.~Denker, W.~Philipp,
  \emph{Approximation by Brownian motion for Gibbs measures and flows
  under a function},
  Ergodic Theory Dynam. Systems \textbf{4} (1984), no.~4, 541--552.

\bibitem{dolgopyat-2004-limit} D.~Dolgopyat,
  \emph{Limit Theorems for partially hyperbolic systems},
  Trans. Amer. Math. Soc. 356 (2004), no. 4, 1637--1689.

\bibitem{dolgopyat-2022-multiple} D.~Dolgopyat, B.~Fayad, S.~Liu,
  \emph{Multiple Borel--Cantelli lemma in dynamics and multilog law for
  recurrence},
  J. Mod. Dyn. \textbf{18} (2022), 209--289.

\bibitem{dolgopyat-2025-berry} D.~Dolgopyat, Y.~Hafouta,
  \emph{Berry Esseen theorems for sequences of expanding maps},
  Prob. Theory Related Fields \textbf{193} (2025), no.~3, 1075--1119.

\bibitem{fefferman-1981-strong} R.~Fefferman,
  \emph{Strong differentiation with respect to measures},
  Amer. J. Math. \textbf{103} (1981), no.~1, 33--40.

\bibitem{gouezel-2010-almost} S.~Gou{\"e}zel,
  \emph{Almost sure invariance principle for dynamical systems by
  spectral methods},
  Ann. Probab. \textbf{38} (2010), no.~4, 1639--1671.

\bibitem{gouezel-2015-limit} S.~Gou{\"e}zel,
  \emph{Limit theorems in dynamical systems using the spectral method},
  in Hyperbolic dynamics, fluctuations and large deviations, 161--193,
  Proc. Sympos. Pure Math., \textbf{89}, Amer. Math. Soc., Providence,
  RI, 2015.

\bibitem{guivarch-1988-theoremes} Y.~Guivarc'h, J.~Hardy,
  \emph{Th\'eor\`emes limites pour une classe de cha\^ines de Markov et
  applications aux diff\'eomorphismes d'Anosov},
  Ann. Inst. H. Poincar\'e Probab. Statist. \textbf{24} (1988), no.~1,
  73--98.

\bibitem{gupta-2011-extreme} C.~Gupta, M.~P.~Holland, M.~Nicol,
  \emph{Extreme value theory and return time statistics for dispersing
  billiard maps and flows, Lozi maps and Lorenz-like maps},
  Ergodic Theory Dynam. Systems \textbf{31} (2011), no.~5, 1363--1390.

\bibitem{harman-1998-metric} G.~Harman,
  \emph{Metric Number Theory},
  London Math. Soc. Monogr. (N.S.), 18, Oxford Univ. Press, New York,
  1998.

\bibitem{haydn-2017-almost} N.~Haydn, M.~Nicol, A.~T{\"o}r{\"o}k,
    S.~Vaienti,
  \emph{Almost sure invariance principle for sequential and
    non-stationary dynamical systems},
  Trans. Amer. Math. Soc. \textbf{369} (2017), no.~8, 5293--5316.

\bibitem{haydn-2013-central} N.~Haydn, M.~Nicol, S.~Vaienti, L.~Zhang,
   \emph{Central limit theorems for the shrinking target problem},
   J. Stat. Phys. \textbf{153} (2013), no.~5, 864--887.

\bibitem{he-2024-quantitative} Y.~He,
  \emph{Quantitative recurrence properties and
    strong dynamical Borel--Cantelli lemma for dynamical systems
    with exponential decay of correlations},
  Ergodic Theory Dynam. Systems, published online (2026), 1--20.

\bibitem{he-2025-shrinking} Y.~He, B.~Li, S.~Velani,
  \emph{Shrinking targets versus recurrence: a brief survey},
  2025. (preprint)
  \href{http://arxiv.org/abs/2511.02377}{arxiv:2511.02377v1}.

\bibitem{he-2023-quantitative} Y.~He, L.~Liao,
  \emph{Quantitative recurrence properties for piecewise expanding maps
  on $[0,1]^{d}$},
  accepted for publication in Ann. Sc. Norm. Super. Pisa,
  2023 (preprint)
  \href{https://arxiv.org/abs/2302.05149}{arxiv:2302.05149}.

\bibitem{hirsch-1970-stable} M.~W.~Hirsch, C.~C.~Pugh,
  \emph{Stable manifolds and hyperbolic sets} in
  Global Analysis (Proc. Sympos. Pure Math., Vols. XIV, XV, XVI,
  Berkeley, Calif., 1968), pp. 133--163, Amer. Math. Soc., Providence,
  RI, 1970.

\bibitem{holland-2024-dichotomy} M.~P.~Holland, M.~Kirsebom, Ph.~Kunde,
    P.~Tomas,
  \emph{Dichotomy results for eventually always hitting time statistics
  and almost sure growth of extremes},
  Trans. Amer. Math. Soc. \textbf{377} (2024), no.~6, 3927--3982.

\bibitem{holland-2012-extreme} M.~P.~Holland, M.~Nicol,
    A.~T{\"o}r{\"o}k,
  \emph{Extreme value theory for non-uniformly expanding dynamical
  systems},
  Trans. Amer. Math. Soc. \textbf{364} (2012), no.~2, 661--688.

\bibitem{holland-2016-quantitative} M.~P.~Holland, P.~Rabassa,
    A.E.~Sterk,
  \emph{Quantitative recurrence statistics and convergence to an
  extreme value distribution for non-uniformly hyperbolic dynamical
  systems},
  Nonlinearity \textbf{29} (2016), no.~8, 2355--2394.

\bibitem{holland-2025-distributional} M.~P.~Holland, M.~Todd,
  \emph{On distributional limit laws for recurrence},
  Nonlinearity \textbf{38} (2025), no.~7, Paper No. 075028.

\bibitem{huang-2025-exponential} J.~Huang, B.~Li, S.~Velani,
  \emph{Exponential mixing for Gibbs measures on self-conformal sets
  and applications},
  2025. (preprint)
  \href{https://arxiv.org/abs/2504.00632}{arxiv:2504.00632v1}.

\bibitem{hussain-2022-dynamical} M.~Hussain, B.~Li, D.~Simmons,
    B.~Wang,
  \emph{Dynamical {{Borel--Cantelli}} lemma for recurrence theory},
  Ergodic Theory Dynam. Systems \textbf{42} (2022), no.~6, 1994--2008.

\bibitem{kallenberg-2002-foundations} O.~Kallenberg,
  \emph{Foundations of Modern Probability}, second edition,
  Probability and its Applications (New York), Springer, New York,
  2002.

\bibitem{kirsebom-2023-shrinking} M.~Kirsebom, Ph.~Kunde, T.~Persson,
  \emph{On shrinking targets and self-returning points},
  Annali della Scuola Normale Superiore di Pisa, Classe di Scienze,
  Vol. \textbf{24}, (2023) no.~3, 1499--1535.

\bibitem{kleinbock-2023-dynamical} D.~Kleinbock, J.~Zheng,
  \emph{Dynamical {{Borel--Cantelli}} lemma for recurrence under
    {{Lipschitz}} twists},
  Nonlinearity \textbf{36} (2023), no.~2, 1434--1460.

\bibitem{kotani-2001-pressure} M.~Kotani, T.~Sunada,
  \emph{The pressure and higher correlations for an {{Anosov}}
    diffeomorphism},
  Ergodic Theory Dynam. Systems 21 (2001), no. 3, 807--821.

\bibitem{lasota-1973-existence} A.~Lasota, J.~A.~Yorke,
  \emph{On the existence of invariant measures for piecewise
  monotonic transformations},
  Trans. Amer. Math. Soc. \textbf{186} (1973), no.~186, 481--488.

\bibitem{leborgne-2014-martingales} S.~Le~Borgne,
  \emph{Martingales in hyperbolic geometry},
  in Analytic and probabilistic approaches to dynamics in negative
  curvature, 1--63, Springer INdAM Ser. \textbf{9} 2014.

\bibitem{levesley-2024-shrinking} J.~Levesley, B.~Li, D.~Simmons,
    S.~Velani,
  \emph{Shrinking targets versus recurrence: the quantitative theory},
  Mathematika \textbf{71} (2025), no.~4, Paper No.~e70039, 16 pp.

\bibitem{liu-2024-wasserstein} Z.~Liu,
  \emph{Wasserstein convergence rates in the invariance principle for
  sequential dynamical systems},
  Nonlinearity \textbf{37} (2024), no.~12, Paper No. 125019, 27 pp.

\bibitem{liverani-1998-conformal} C.~Liverani, B.~Saussol, S.~Vaienti,
  \emph{Conformal measure and decay of correlation for covering
  weighted systems},
  Ergodic Theory Dynam. Systems \textbf{18} (1998), no.~6, 1399--1420.

\bibitem{melbourne-2009-vector} I.~Melbourne, M.~Nicol,
  \emph{A vector-valued almost sure invariance principle for hyperbolic
  dynamical systems},
  Ann. Probab. \textbf{37} (2009), no.~2, 478--505.

\bibitem{merlevede-2012-strong} F.~Merlev{\`e}de, E.~Rio,
  \emph{Strong approximation of partial sums under dependence
  conditions with application to dynamical systems},
  Stochastic Process. Appl. \textbf{122} (2012), no.~1, 386--417.

\bibitem{parry-1999-zeta} W.~Parry, M.~Pollicott,
  \emph{Zeta Functions and the Periodic Orbit Structure of Hyperbolic
  Dynamics},
  Ast{\'e}risque,
  No.\ 187-188, 1999.

\bibitem{pene-2000-thesis} F.~P\`ene,
  \emph{Applications des propri\'et\'es stochastiques des syst\`emes
  dynamiques de type hyperbolique: ergodicit\'e du billard dispersif
  dans le plan, moyennisation d'\'equations diff\'erentielles
  perturb\'ees par un flot ergodique}, Doctoral Thesis,
  L'Universit\'e de Rennes~I (2000).

\bibitem{pene-2002-rates} F.~P\`ene,
  \emph{Rates of convergence in the CLT for two-dimensional dispersive
  billiards},
  Comm. Math. Phys. \textbf{225} (2002), no.~1, 91--119.

\bibitem{pene-2004-multiple} F.~P\`ene,
  \emph{Multiple decorrelation and rate of convergence in
  multidimensional limit theorems for the Prokhorov metric},
  Ann. Probab. 32 (2004), no. 3B, 2477--2525.

\bibitem{persson-2023-strong} T.~Persson,
  \emph{A strong {{Borel--Cantelli}} lemma for recurrence},
  Studia Math. 268 (2023), 75--89.

\bibitem{persson-2026-strong} T.~Persson, A.~Rodriguez~Sponheimer,
  \emph{Strong Borel--Cantelli lemmas for recurrence},
  Studia Math. \textbf{286} (2026), no.~3, 277--301.

\bibitem{philipp-1967-some} W.~Philipp,
  \emph{Some metrical theorems in number theory},
  Pacific J. Math. \textbf{20} (1967), 109--127.

\bibitem{philipp-1975-almost} W.~Philipp, W.~Stout,
  \emph{Almost sure invariance principles for partial sums of weakly
  dependent random variables},
  Mem. Amer. Math. Soc. \textbf{2} (1975), no.~161, iv--140 pp.

\bibitem{ars-2025-recurrence} A.~Rodriguez~Sponheimer,
  \emph{A Recurrence-type Strong Borel--Cantelli Lemma for Axiom~A
  Diffeomorphisms},
  Ergodic Theory Dynam. Systems \textbf{45} (2025), no.~3, 936--955.

\bibitem{rychlik-1983-bounded} M.~Rychlik,
  \emph{Bounded variation and invariant measures},
  Studia Math. \textbf{76} (1983), no.~1, 69--80.

\bibitem{sarig-2003-existence} O.~Sarig,
  \emph{Existence of Gibbs measures for countable Markov shifts},
  Proc. Amer. Math. Soc. \textbf{131} (2003), no.~6, 1751--1758.

\bibitem{saussol-2000-absolutely} B.~Saussol,
  \emph{Absolutely continuous invariant measures for multidimensional
  expanding maps},
  Israel J. Math. \textbf{116} (2000), 223--248.

\bibitem{stein-1970-singular} E.~M.~Stein,
  \emph{Singular integrals and differentiability properties of
  functions},
  Princeton Mathematical Series, No.~30, Princeton Univ. Press,
  Princeton, NJ, 1970.

\bibitem{su-2019-almost} Y.~Su,
  \emph{Almost surely invariance principle for non-stationary and
  random intermittent dynamical systems},
  Discrete Contin. Dyn. Syst. \textbf{39} (2019), no.~11, 6585--6597.

\bibitem{su-2022-vector} Y.~Su,
  \emph{Vector-valued almost sure invariance principles for
  (non)stationary and random dynamical systems},
  Trans. Amer. Math. Soc. \textbf{375} (2022), no.~7, 4809--4848.

\bibitem{urbanski-2007-recurrence} M.~Urb{\'a}nski,
  \emph{Recurrence rates for loosely Markov dynamical systems},
  J. Aust. Math. Soc. \textbf{82} (2007), no.~1, 39--57.

\bibitem{young-1998-statistical} L.-S.~Young,
  \emph{Statistical properties of dynamical systems with some
    hyperbolicity},
  Ann. of Math. 147 (1998), no. 3, 585--650.

\end{thebibliography}
\end{document}